\documentclass[10pt]{amsart}
\usepackage{mathtools}
\usepackage{amssymb}
\usepackage{stmaryrd}
\usepackage{graphicx}
\usepackage[export]{adjustbox}
\usepackage{xcolor}
\usepackage{tikz}
\usepackage[colorlinks, linkcolor=blue, citecolor=blue, pagebackref]{hyperref}
\usepackage{amsrefs}
\usepackage{ytableau}
\usepackage{subcaption}
\usepackage{fullpage}
\usepackage[indent]{parskip}
\usepackage{enumitem}
\usepackage{setspace}
\usepackage{float}
\usepackage{halloweenmath}
\usepackage{mlmodern}
\usepackage[scr=boondoxupr,scrscaled=1.1]{mathalfa}

\theoremstyle{plain}
\newtheorem{theorem}{Theorem}[section]
\newtheorem{lemma}[theorem]{Lemma}

\newtheorem*{thm*}{Theorem}

\theoremstyle{definition}

\newtheorem{rem}[theorem]{Remark}
\newtheorem{problem}[theorem]{Problem}
\newtheorem{algorithm}[theorem]{Algorithm}

\numberwithin{equation}{section}

\newcommand{\la}{\lambda}
\newcommand{\SYT}{{\rm SYT}}
\newcommand{\fS}{\mathfrak{S}}
\newcommand{\RS}{\operatorname{RS}}
\newcommand{\se}[1]{\mathsf{#1}}
\newcommand{\gT}{g_{_{\mathrm{T}}}}
\newcommand{\gH}{g_{_{\mathrm{H}}}}
\newcommand{\gV}{g_{_{\mathrm{V}}}}
\newcommand{\gR}{g_{_{\mathrm{R}}}}

\def\shortmset#1#2{\ensuremath{\left(\kern-.28em\left(\genfrac{}{}{0pt}{}
{#1}{#2}\right)\kern-.28em\right)}}

\def\tallmset#1#2{\ensuremath{\left(\kern-.4em\left(\genfrac{}{}{0pt}{}
{#1}{#2}\right)\kern-.4em\right)}}

\def\tallballot#1#2{\ensuremath{\left\langle\kern-.45em \left\langle \genfrac{}{}{0pt}{}
{#1}{#2}%
   \right\rangle \kern-.45em \right\rangle}}

\def\shortballot#1#2{\ensuremath{\left\langle\kern-.32em \left\langle \genfrac{}{}{0pt}{}
{#1}{#2}%
   \right\rangle \kern-.32em \right\rangle}}

\begin{document}

\title{Explicit marginal distributions for permutations \\ with prescribed Robinson--Schensted shape}

\author{William Q.~Erickson}
% \address{
% Department of Mathematics\\
% University of Tennessee at Chattanooga \\ 
% 700 Vine St.\\
% Chattanooga, TN 37403}
\email{william-erickson01@utc.edu}

\subjclass[2020]{Primary 60C05; Secondary 05A05, 05A10}

\keywords{Robinson--Schensted correspondence, permutations, Young tableaux, fixed points, Wallis integrals}

\begin{abstract}

Given a permutation $\sigma$, the Robinson--Schensted correspondence determines a certain partition called the \emph{shape} of $\sigma$.
Famously, the shape measures the longest unions of increasing and decreasing subsequences, thus giving global information about $\sigma$.
In this paper, by contrast, we ask how prescribing a shape collectively controls \emph{local} behavior:
if $\sigma$ is a random permutation of shape $\la$, then what is $P^\la_{ij} \coloneqq$ the probability that $\sigma(i) = j$?
Using tableau-theoretic methods, we derive explicit formulas for $P^\la_{ij}$ when $\la$ is a hook, two-row, or rectangular shape.
We use these formulas to depict and analyze the intricate diffraction-like patterns in the matrices $(P^\la_{ij})$.
As a surprising application, we show that for both hook and two-row shapes, as the largest part of $\la$ tends to infinity with the remaining parts fixed (summing to $m$), the expected proportion of fixed points in $\sigma$ approaches the Wallis integral $\int_0^{\pi/2} \sin^{2m+1} x \: dx = (2m)!! / (2m+1)!!$.
\end{abstract}

\maketitle

\ytableausetup{centertableaux,smalltableaux}

\section{Introduction}

Let $\fS_n$ denote the symmetric group on $n$ letters.
In this paper, we solve the following problem in the case where the partition $\la$ is a hook or two-row or rectangular shape:

\begin{problem}
  \label{problem:Pij}
  Let $\sigma \in \fS_n$ be chosen uniformly at random, and let $\lambda$ be a partition of $n$.
  For all $1 \leq i,j \leq n$, find an explicit formula for the conditional probability
  \[
    P^\la_{ij} \coloneqq \mathbb{P} \Big( \sigma(i) = j \; \Big| \; \sigma \text{ has Robinson--Schensted shape } \la \Big).
  \]
\end{problem}

Programming our solution has led us to realize that the doubly stochastic matrix $P^\la = (P^\la_{ij})$ exhibits intricate ``diffraction''-like patterns that imitate certain aspects of the Young diagram of $\la$;
see Figures~\ref{fig:n15 hooks}--\ref{fig:rectangles}, which show the heat maps of $P^\la$ for hook, two-row, and rectangular shapes, respectively.
Our $P^\la_{ij}$ formulas not only describe these patterns explicitly, but also enable us to generate their pictures in the first place, since a brute-force approach becomes computationally prohibitive even for relatively small $n$.
We are hopeful that the results and methods in the paper might provide a launch point in solving Problem~\ref{problem:Pij} for other families of shapes, or ultimately for arbitrary~$\la$.

\subsection*{Motivation}

At the risk of understatement, the Robinson--Schensted (RS) correspondence is a fundamental concept in algebraic combinatorics.
In itself, the RS correspondence is a bijection from $\fS_n$ to the set of ordered pairs of same-shaped standard Young tableaux with $n$ boxes;
as an example where $n=7$, we have
\begin{equation}
  \label{example intro}
  \RS : (2, 7, 4, 6, 3, 1, 5) \longmapsto \left( \ytableaushort{124,37,5,6}_{\!\!\textstyle,} \;\; \ytableaushort{135,26,4,7} \right),
\end{equation}
with the intermediate details given in Figure~\ref{fig: ex of RSK}.
At a less granular level, however, when we forget the tableau entries, the RS correspondence simply associates to each permutation in $\fS_n$ a certain \emph{shape}, namely the partition of $n$ given by the row lengths of either tableau.
For example, the permutation in~\eqref{example intro} has shape $(3,2,1,1)$. 
The shape of a permutation $\sigma$ encodes global information about $\sigma$:
Greene's theorem~\eqref{Greene} states that the partial sums of the shape's row (resp., column) lengths give the lengths of the longest unions of increasing (resp., decreasing) subsequences in $\sigma$.
Thus, since prescribing the shape of a random permutation~$\sigma$ controls the lengths of its monotone subsequences, this should presumably affect the expected \emph{local} behavior of $\sigma$, meaning the marginal distribution of $\sigma(i)$ for each $1 \leq i \leq n$.
The effect of shape on the marginal distributions seems not to have been studied in the literature;
our desire to understand and quantify it led us to pose Problem~\ref{problem:Pij}, the central problem in this project.

\ytableausetup{smalltableaux}

\begin{figure}[t]
\captionsetup[subfigure]{labelformat=empty}
  \centering
     \begin{subfigure}[b]{0.1\textwidth}
         \centering
         \includegraphics[frame, width=\textwidth]{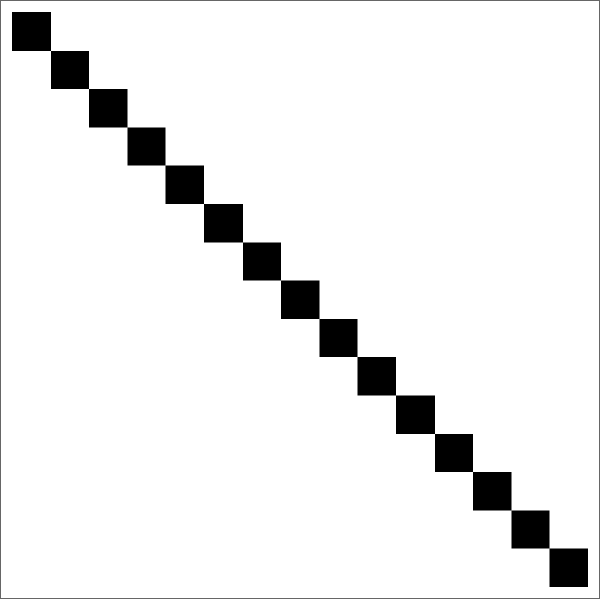}
         \caption{$\lambda\!=\!(15,1^0)$}
         \end{subfigure}
     \hfill
     \begin{subfigure}[b]{0.1\textwidth}
         \centering
         \includegraphics[frame, width=\textwidth]{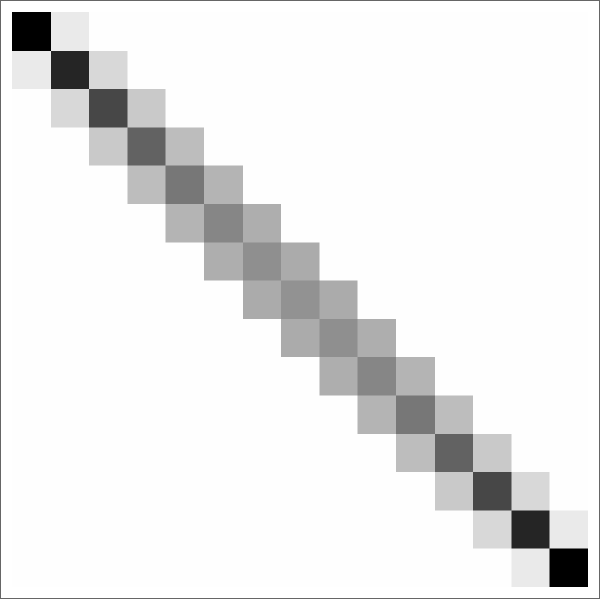}
         \caption{$\lambda\!=\!(14,1^1)$}
         \end{subfigure}
     \hfill\begin{subfigure}[b]{0.1\textwidth}
         \centering
         \includegraphics[frame, width=\textwidth]{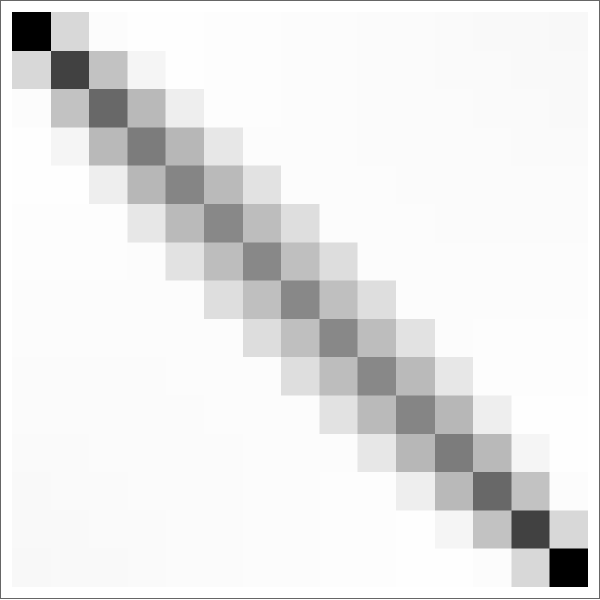}
         \caption{$\lambda\!=\!(13,1^2)$}
         \end{subfigure}
     \hfill\begin{subfigure}[b]{0.1\textwidth}
         \centering
         \includegraphics[frame, width=\textwidth]{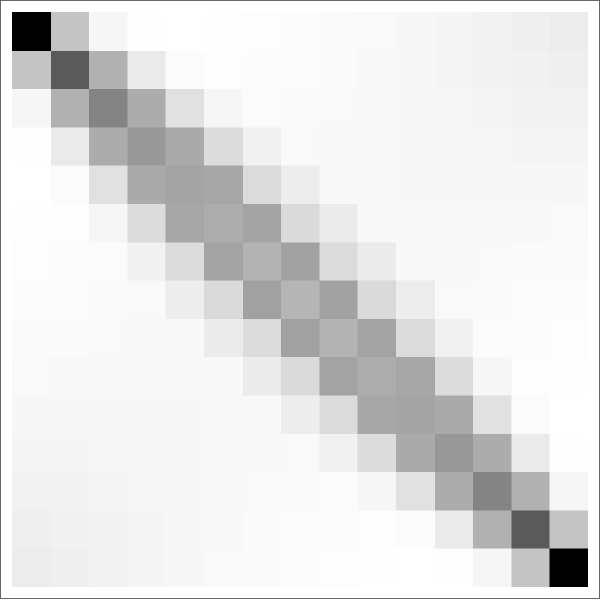}
         \caption{$\lambda\!=\!(12,1^3)$}
         \end{subfigure}
     \hfill\begin{subfigure}[b]{0.1\textwidth}
         \centering
         \includegraphics[frame, width=\textwidth]{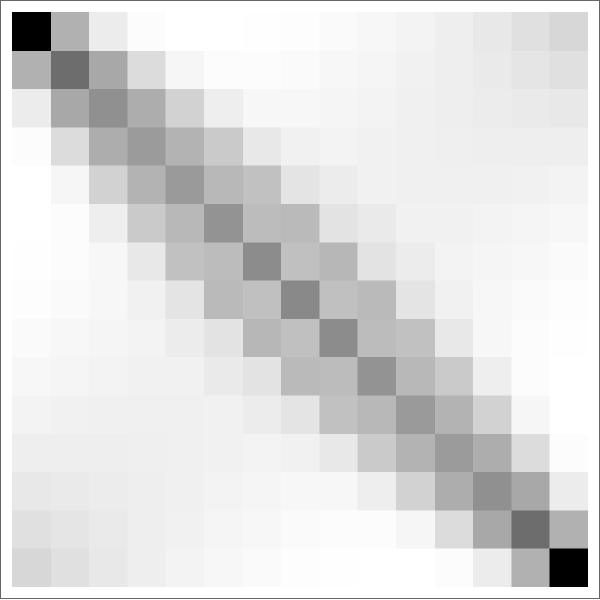}
         \caption{$\lambda\!=\!(11,1^4)$}
         \end{subfigure}
     \hfill\begin{subfigure}[b]{0.1\textwidth}
         \centering
         \includegraphics[frame, width=\textwidth]{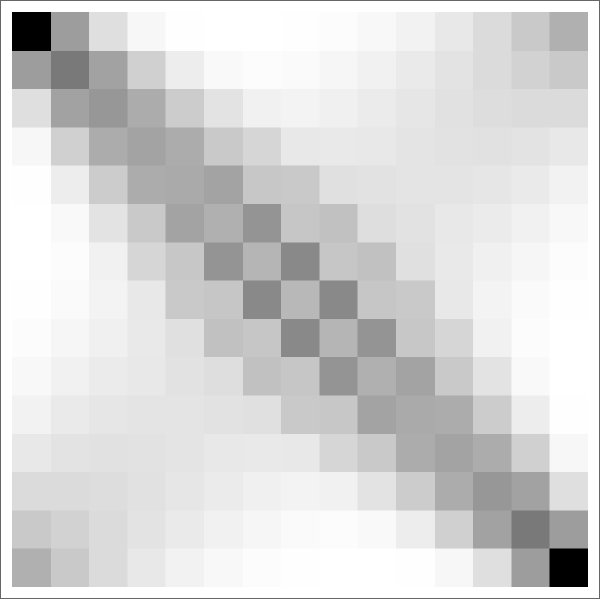}
         \caption{$\lambda\!=\!(10,1^5)$}
         \end{subfigure}
     \hfill\begin{subfigure}[b]{0.1\textwidth}
         \centering
         \includegraphics[frame, width=\textwidth]{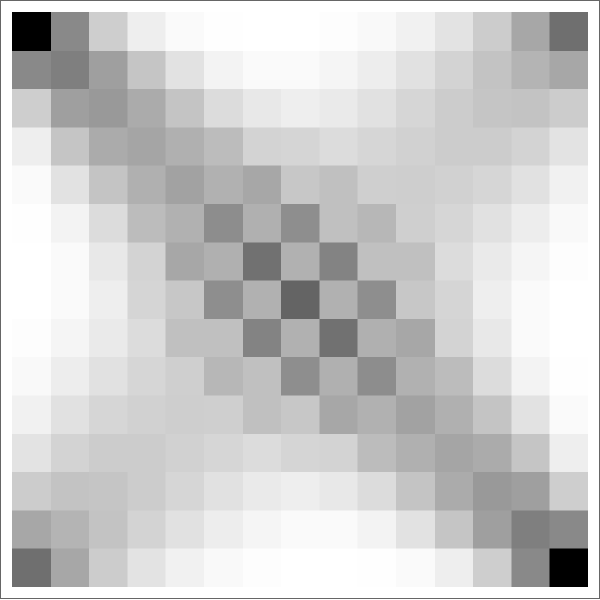}
         \caption{$\lambda\!=\!(9,1^6)$}
         \end{subfigure}
     \hfill\begin{subfigure}[b]{0.1\textwidth}
         \centering
         \includegraphics[frame, width=\textwidth]{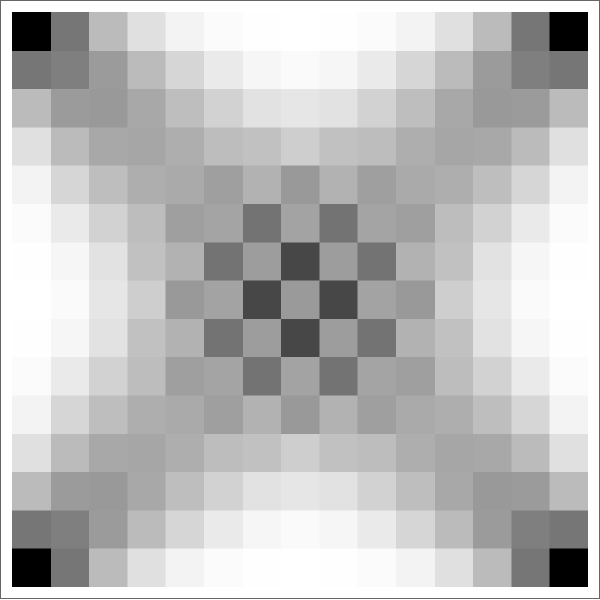}
         \caption{$\lambda\!=\!(8,1^7)$}
         \end{subfigure}
     \hfill

        \caption{Heat maps of $P^\la$ where $\la = (\ell, 1^{n-\ell})$ is a hook shape, for $n=15$.
        In each heat map, the entry $P^\la_{ij}$ gives the probability that $\sigma(i) =j$ given that a uniform random~$\sigma$ has shape $\la$;
        white represents 0, and black represents the maximum matrix entry.
        These were generated in Mathematica using our explicit formula for $P^\la_{ij}$ in Theorem~\ref{thm:hooks in intro}.
        In light of Lemma~\ref{lemma:symmetries}, the cases $\ell = 8, 7, \ldots, 2, 1$ are obtained by holding the page up to a mirror.}
        \label{fig:n15 hooks}
\end{figure}

\begin{figure}[t]
\captionsetup[subfigure]{labelformat=empty}
  \centering
     \begin{subfigure}[b]{0.1\textwidth}
         \centering
         \includegraphics[frame, width=\textwidth]{Heatmaps15/a15.pdf}
         \caption{$\lambda\!=\!(15, 0)$}
         \end{subfigure}
     \hfill
     \begin{subfigure}[b]{0.1\textwidth}
         \centering
         \includegraphics[frame, width=\textwidth]{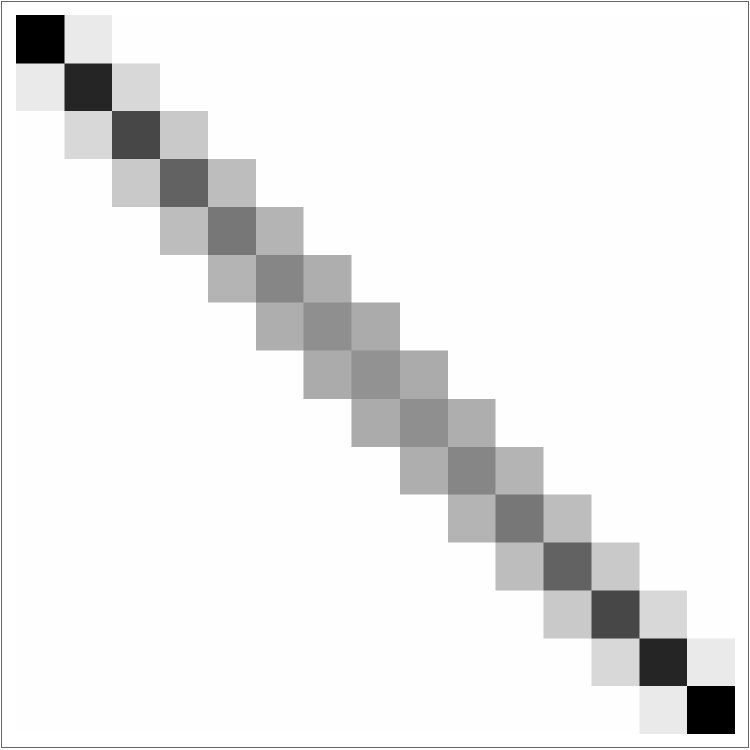}
         \caption{$\lambda\!=\!(14, 1)$}
         \end{subfigure}
     \hfill\begin{subfigure}[b]{0.1\textwidth}
         \centering
         \includegraphics[frame, width=\textwidth]{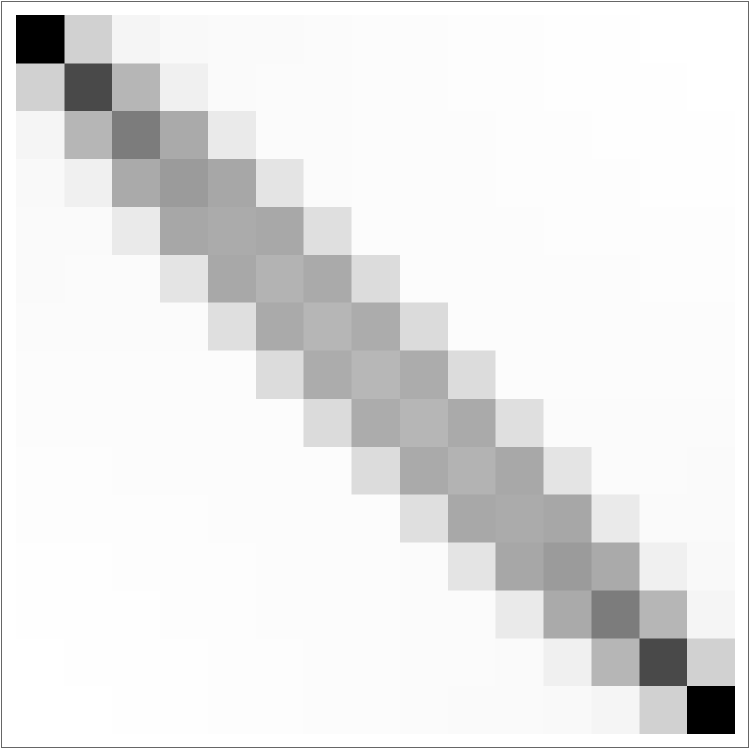}
         \caption{$\lambda\!=\!(13, 2)$}
         \end{subfigure}
     \hfill\begin{subfigure}[b]{0.1\textwidth}
         \centering
         \includegraphics[frame, width=\textwidth]{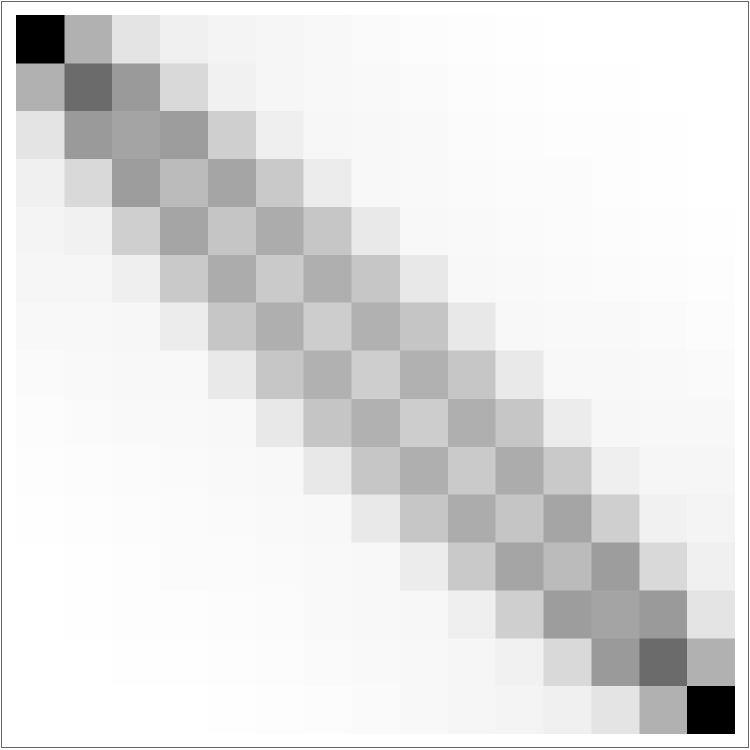}
         \caption{$\lambda\!=\!(12, 3)$}
         \end{subfigure}
     \hfill\begin{subfigure}[b]{0.1\textwidth}
         \centering
         \includegraphics[frame, width=\textwidth]{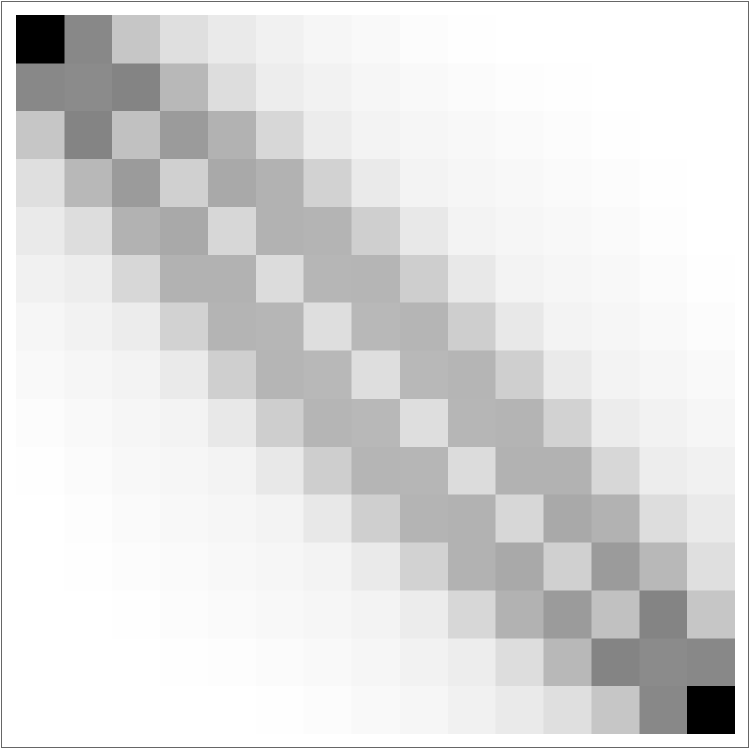}
         \caption{$\lambda\!=\!(11, 4)$}
         \end{subfigure}
     \hfill\begin{subfigure}[b]{0.1\textwidth}
         \centering
         \includegraphics[frame, width=\textwidth]{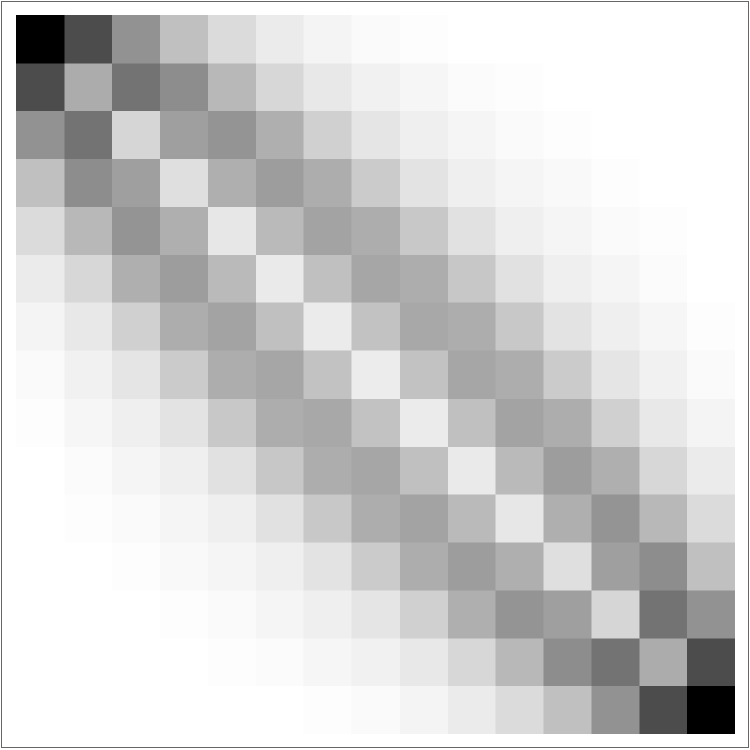}
         \caption{$\lambda\!=\!(10, 5)$}
         \end{subfigure}
     \hfill\begin{subfigure}[b]{0.1\textwidth}
         \centering
         \includegraphics[frame, width=\textwidth]{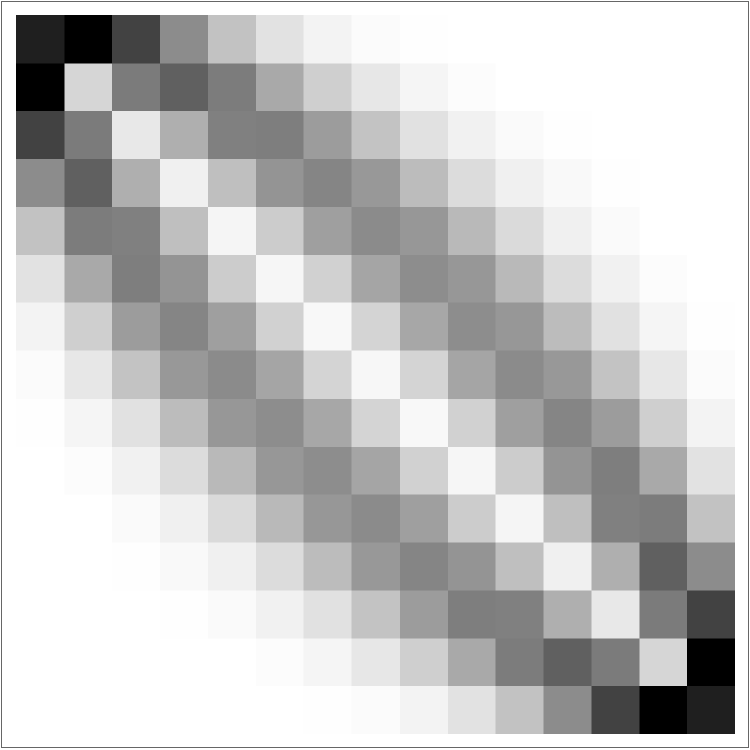}
         \caption{$\lambda\!=\!(9, 6)$}
         \end{subfigure}
     \hfill\begin{subfigure}[b]{0.1\textwidth}
         \centering
         \includegraphics[frame, width=\textwidth]{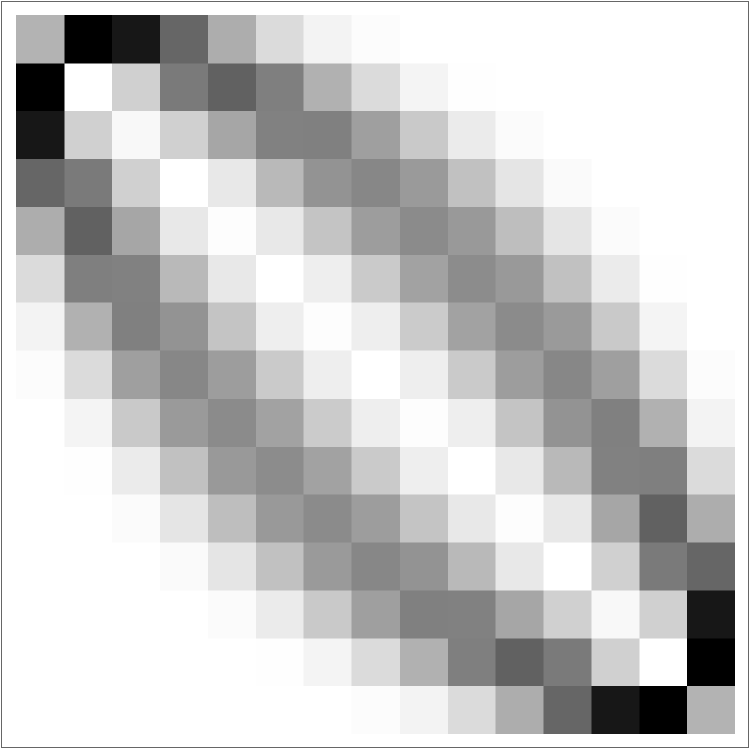}
         \caption{$\lambda\!=\!(8, 7)$}
         \end{subfigure}
     \hfill

        \caption{Heat maps of $P^\la$ where $\la = (\ell, n-\ell)$ is a two-row shape, for $n=15$.
        These were generated in Mathematica using our formula for $P^\la_{ij}$ in Theorem~\ref{thm:two-row in intro}.
        In light of Lemma~\ref{lemma:symmetries}, a horizontal (or vertical) reflection takes the heat map of $\la$ to that of the two-column shape~$\la'$.}
        \label{fig:n15 two-row}
\end{figure}

\begin{figure}[t]
\captionsetup[subfigure]{labelformat=empty}
  \centering
     \begin{subfigure}[b]{0.1\textwidth}
         \centering
         \includegraphics[frame, width=\textwidth]{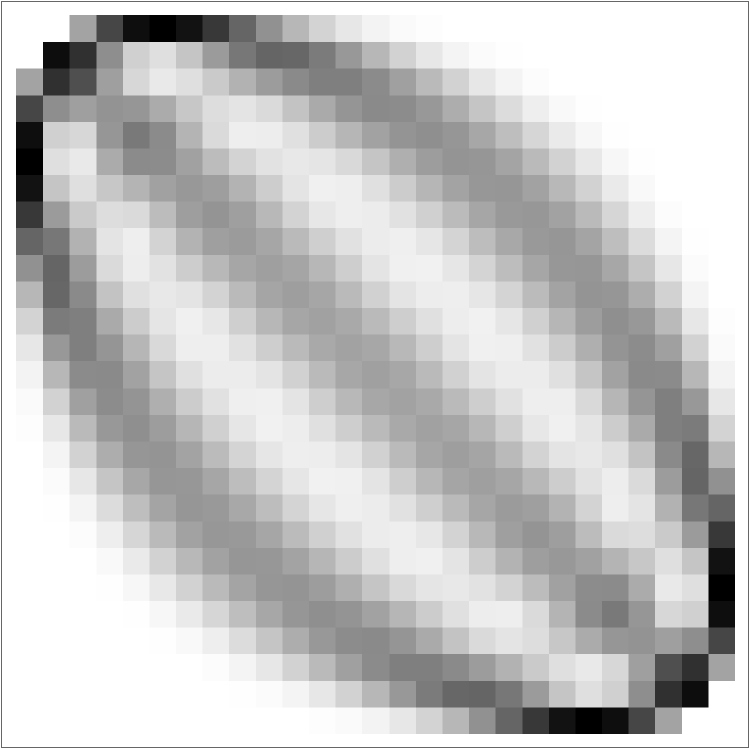}
         \caption{$\lambda=(9^3)$}
         \end{subfigure}
     \hfill
     \begin{subfigure}[b]{0.1\textwidth}
         \centering
         \includegraphics[frame, width=\textwidth]{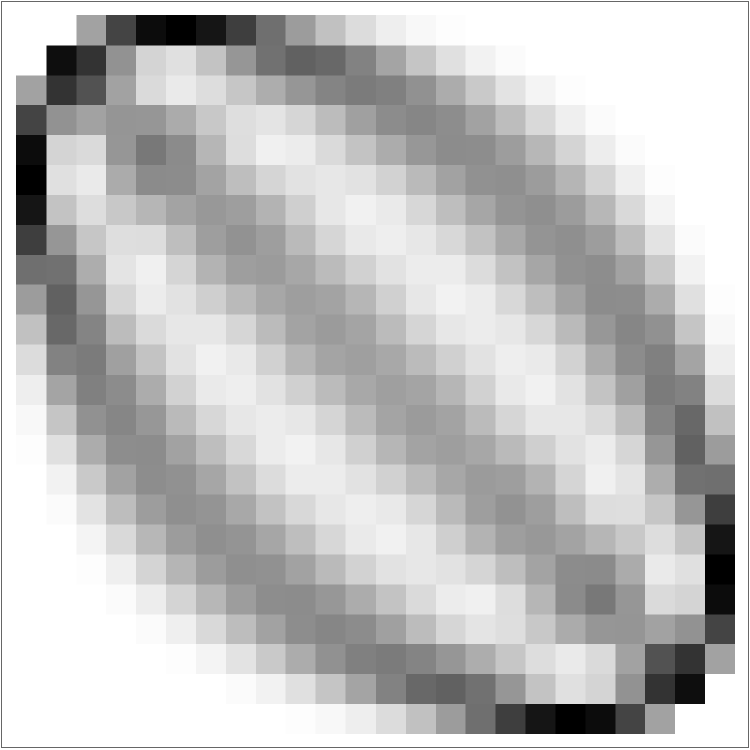}
         \caption{$\lambda=(8^3)$}
         \end{subfigure}
     \hfill\begin{subfigure}[b]{0.1\textwidth}
         \centering
         \includegraphics[frame, width=\textwidth]{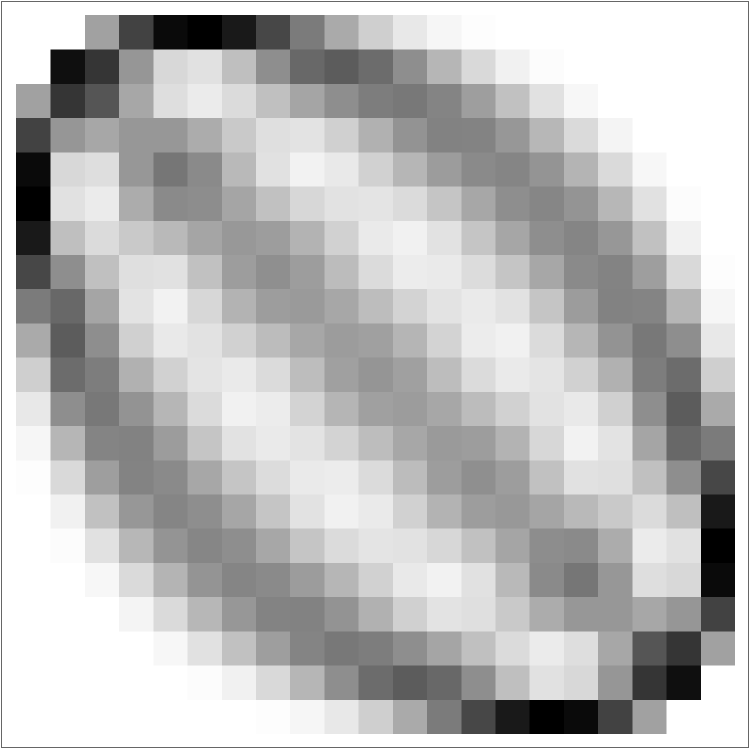}
         \caption{$\lambda=(7^3)$}
         \end{subfigure}
     \hfill\begin{subfigure}[b]{0.1\textwidth}
         \centering
         \includegraphics[frame, width=\textwidth]{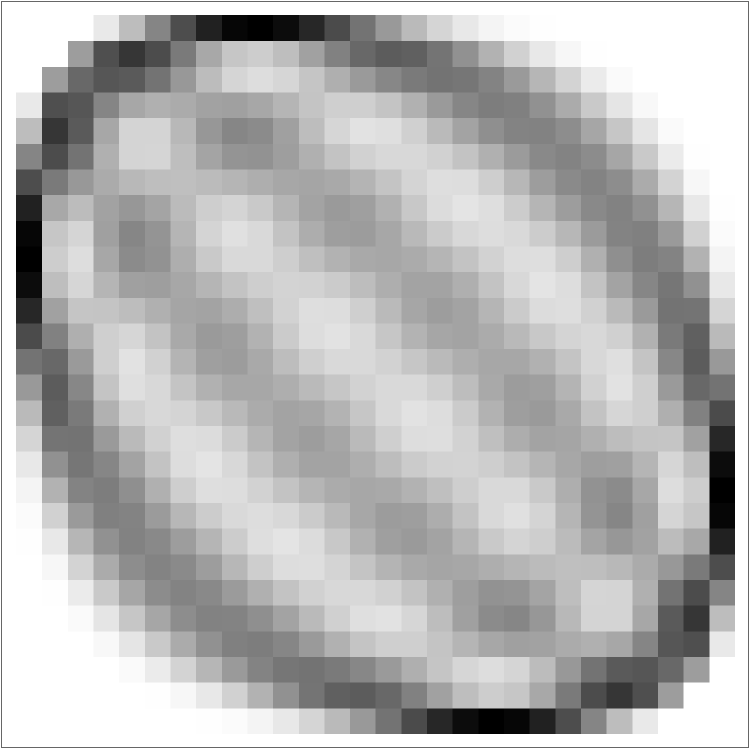}
         \caption{$\lambda=(7^4)$}
         \end{subfigure}
     \hfill\begin{subfigure}[b]{0.1\textwidth}
         \centering
         \includegraphics[frame, width=\textwidth]{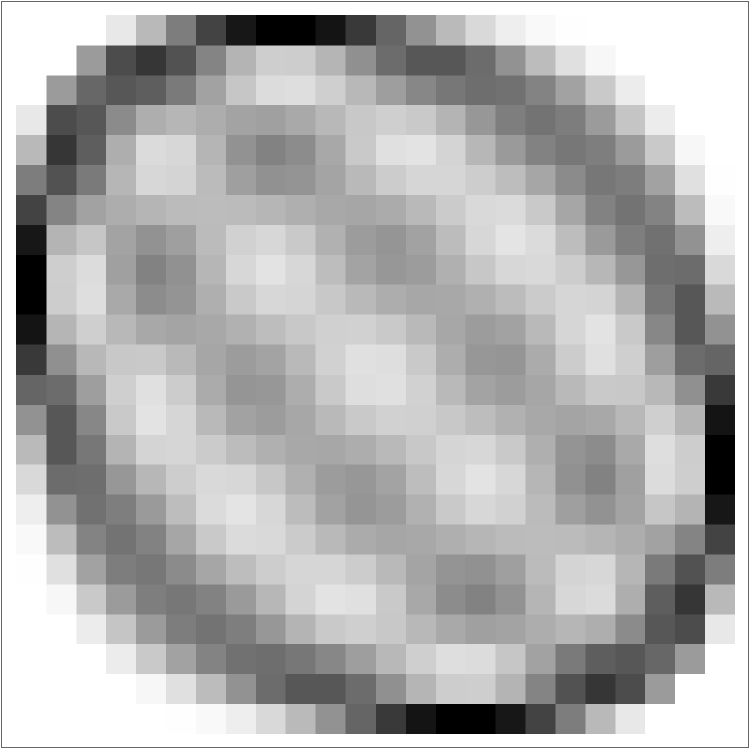}
         \caption{$\lambda=(6^4)$}
         \end{subfigure}
     \hfill\begin{subfigure}[b]{0.1\textwidth}
         \centering
         \includegraphics[frame, width=\textwidth]{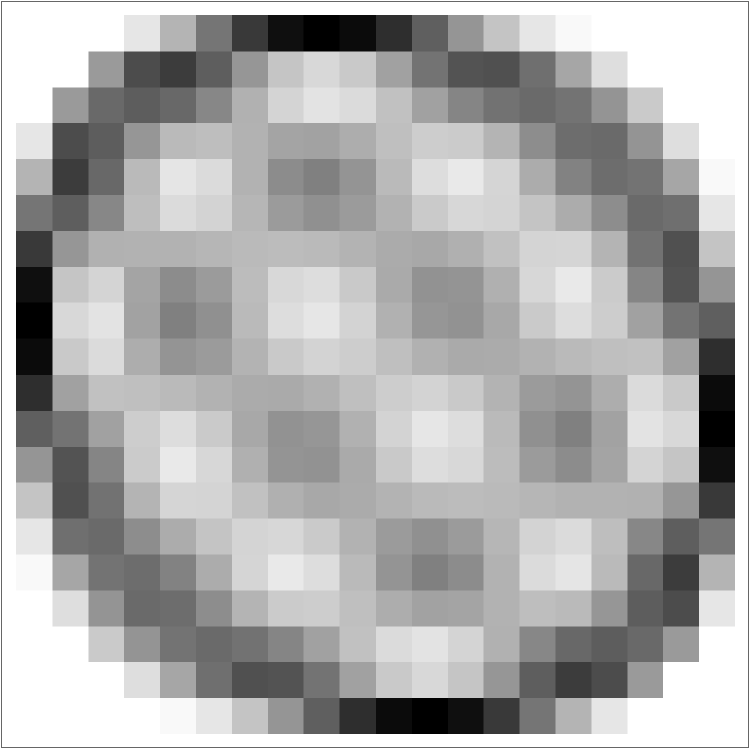}
         \caption{$\lambda=(5^4)$}
         \end{subfigure}
     \hfill\begin{subfigure}[b]{0.1\textwidth}
         \centering
         \includegraphics[frame, width=\textwidth]{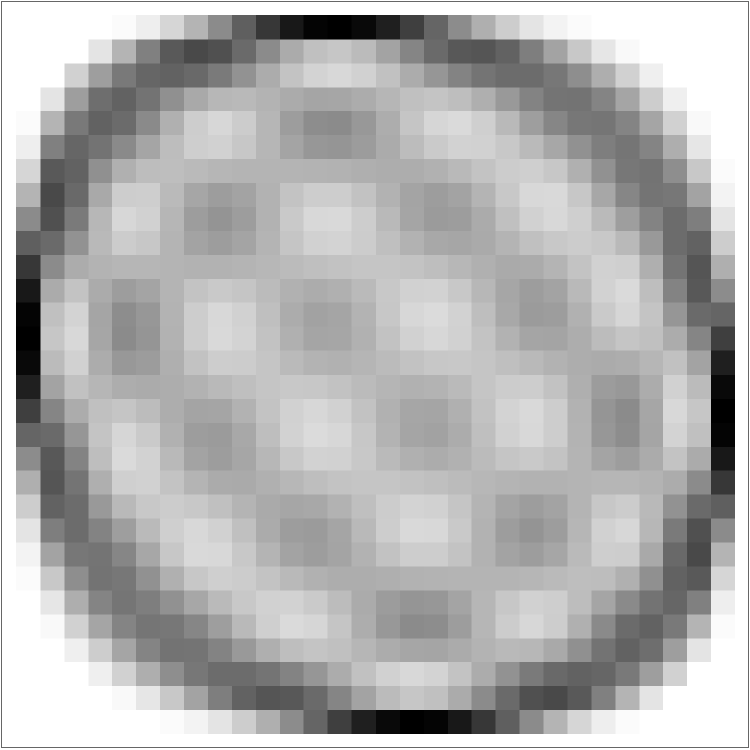}
         \caption{$\lambda=(6^5)$}
         \end{subfigure}
     \hfill\begin{subfigure}[b]{0.1\textwidth}
         \centering
         \includegraphics[frame, width=\textwidth]{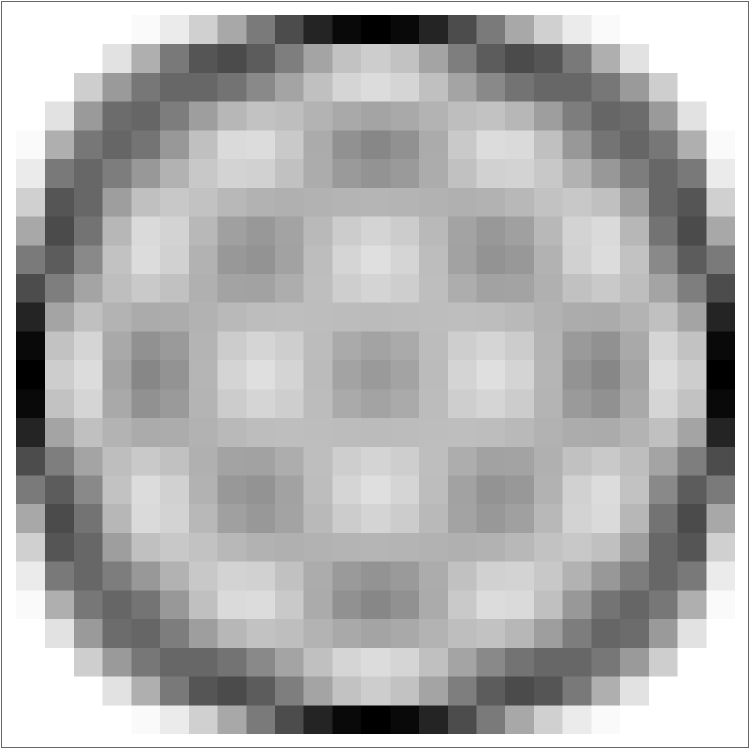}
         \caption{$\lambda=(5^5)$}
         \end{subfigure}
     \hfill

        \caption{Heat maps of $P^\la$ where $\la = (\ell^m)$ is a rectangular shape, showing most cases (up to symmetry) where $20 \leq n \leq 30$ with more than two rows.
        These were generated in Mathematica using our formula for $P^\la_{ij}$ in Theorem~\ref{thm:rectangles in intro}.
        The diffraction-like pattern forms a grid with $\ell$ dark antidiagonal bands and $m$ dark diagonal bands, imitating the (rotated) Young diagram of $\la$.}
        \label{fig:rectangles}
\end{figure}

Problem~\ref{problem:Pij} admits trivial solutions in the two extreme cases where $\la = (n)$ is a single row or $\la = (1^n) \coloneqq (1, \ldots, 1)$ is a single column.
In the one-row case, there is only one permutation of shape $\la = (n)$, namely the identity $(1,2,3,\ldots,n)$, and so we have $P^\la_{ij} = \delta_{ij}$.
Likewise, in the one-column case, there is only one permutation of shape $\la = (1^n)$, namely the exchange permutation $(n,\ldots, 3,2,1)$, and so we have $P^\la_{ij} = \delta_{i, n+1-j}$.
All other shapes interpolate between these two extreme cases in a deeply complicated way.

To gain some intuition for this, it is helpful to view the \emph{heat map} of the matrix $P^\la$, where each entry $P^\la_{ij}$ is given a shade between white (0) and black (the maximum matrix entry).
% Heat maps therefore show the way in which the distribution of mass among the marginal probabilities $P^\la_{ij}$ varies with $\la$.
See Figure~\ref{fig:heat maps intro} for the heat maps of all matrices $P^\la$ in the case $n=7$;
note the complicated way in which the mass spreads away from the diagonal, and then back toward the antidiagonal, as $\la$ varies between the two extremes $(n)$ and $(1^n)$.
Even at such a small value $n=7$, one can see the very coarse beginnings of the diffraction-like patterns in Figures~\ref{fig:n15 hooks}--\ref{fig:rectangles}.
Because $n=7$ is so small, one can easily generate the heat maps in Figure~\ref{fig:heat maps intro} by brute force, by averaging the permutation matrices of all permutations with shape~$\la$.
By contrast, the last heat map shown in Figure~\ref{fig:rectangles} would have required taking the average of approximately $5 \times 10^{17}$ permutation matrices.

A different perspective on Problem~\ref{problem:Pij} comes from the representation theory of $\fS_n$, where the group algebra $\mathbb{C}[\fS_n]$ acts on its defining representation $\mathbb{C}^n$, with each $\sigma \in \fS_n$ acting via its corresponding permutation matrix.
Then $P^\la$ is the matrix of the element in $\mathbb{C}[\fS_n]$ obtained by averaging over all permutations of shape $\la$.
Although these ``shape classes'' in $\fS_n$ lack the nice algebraic structure enjoyed by, say, a subgroup or a conjugacy class, nonetheless it would be interesting in future work to investigate the determinant, eigenvalues, eigenvectors, etc.~of $P^\la$ and to study their representation-theoretic interpretations.

\ytableausetup{boxsize=.3em}

\begin{figure}[t]
\captionsetup[subfigure]{labelformat=empty}
  \centering
     \begin{subfigure}[b]{0.1\textwidth}
         \centering
         \includegraphics[frame, width=\textwidth]{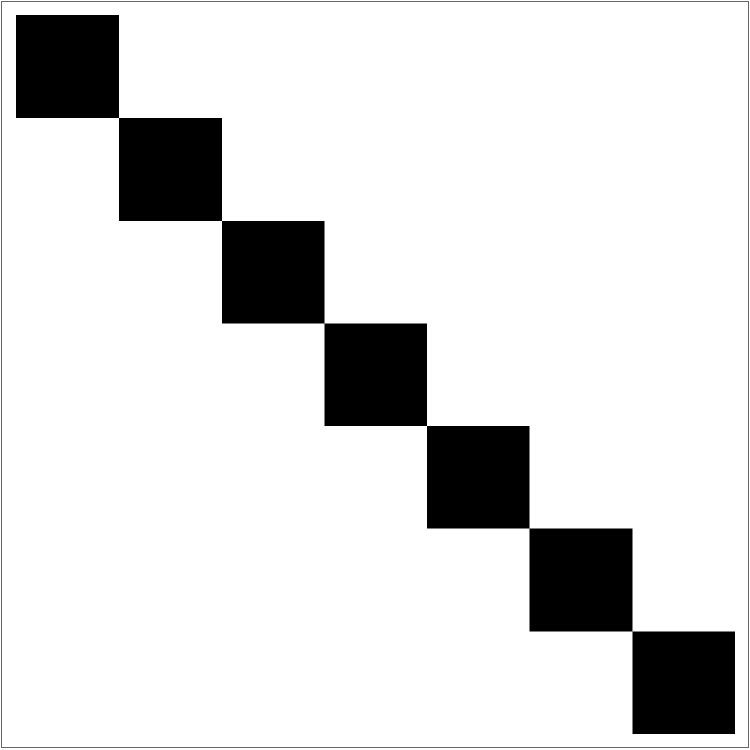}
         \caption{$\la = \ydiagram{7}$}
         \end{subfigure}
     \hfill
     \begin{subfigure}[b]{0.1\textwidth}
         \centering
         \includegraphics[frame, width=\textwidth]{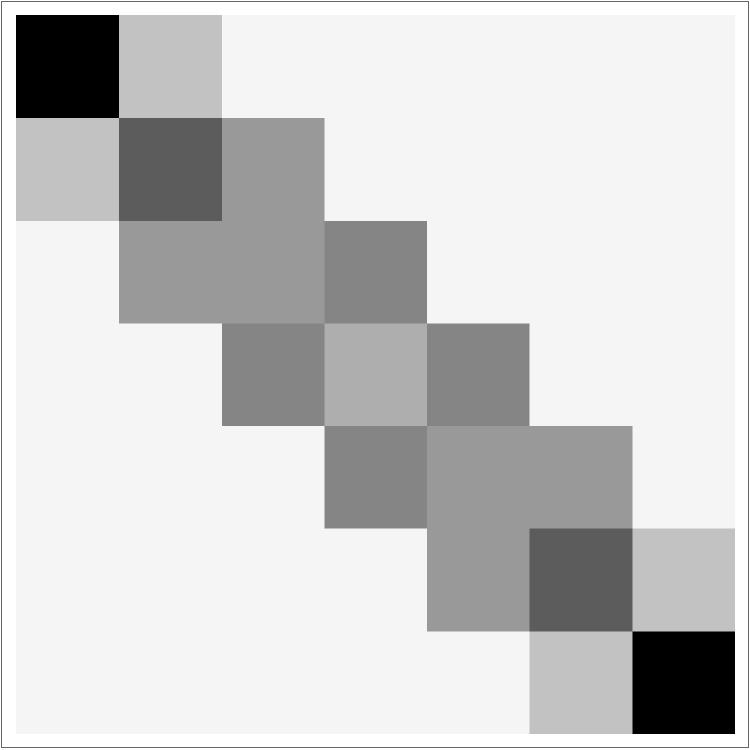}
         \caption{$\la=\ydiagram{6,1}$}
         \end{subfigure}
     \hfill\begin{subfigure}[b]{0.1\textwidth}
         \centering
         \includegraphics[frame, width=\textwidth]{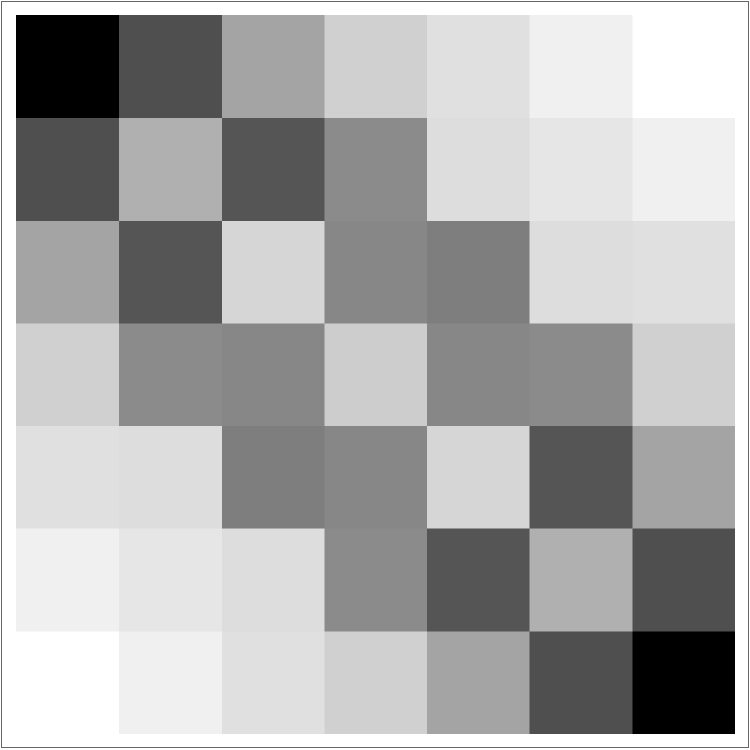}
         \caption{$\la = \ydiagram{5,2}$}
         \end{subfigure}
     \hfill\begin{subfigure}[b]{0.1\textwidth}
         \centering
         \includegraphics[frame, width=\textwidth]{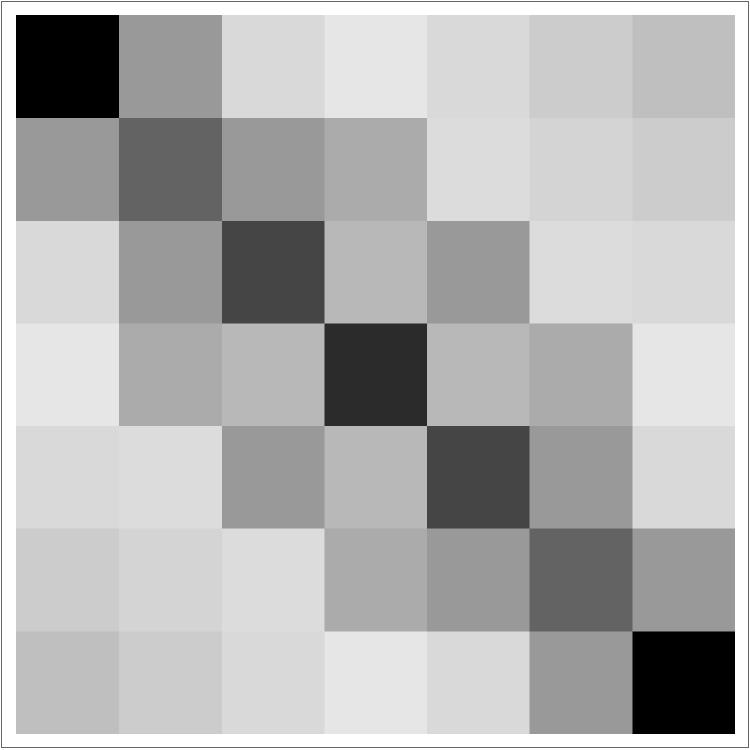}
         \caption{$\la = \ydiagram{5,1,1}$}
         \end{subfigure}
     \hfill\begin{subfigure}[b]{0.1\textwidth}
         \centering
         \includegraphics[frame, width=\textwidth]{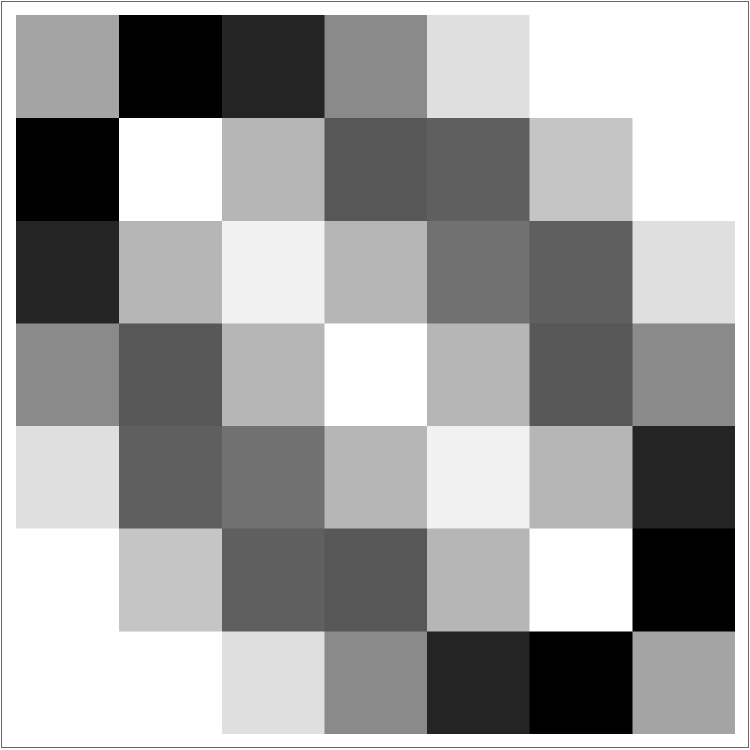}
         \caption{$\la = \ydiagram{4,3}$}
         \end{subfigure}
     \hfill\begin{subfigure}[b]{0.1\textwidth}
         \centering
         \includegraphics[frame, width=\textwidth]{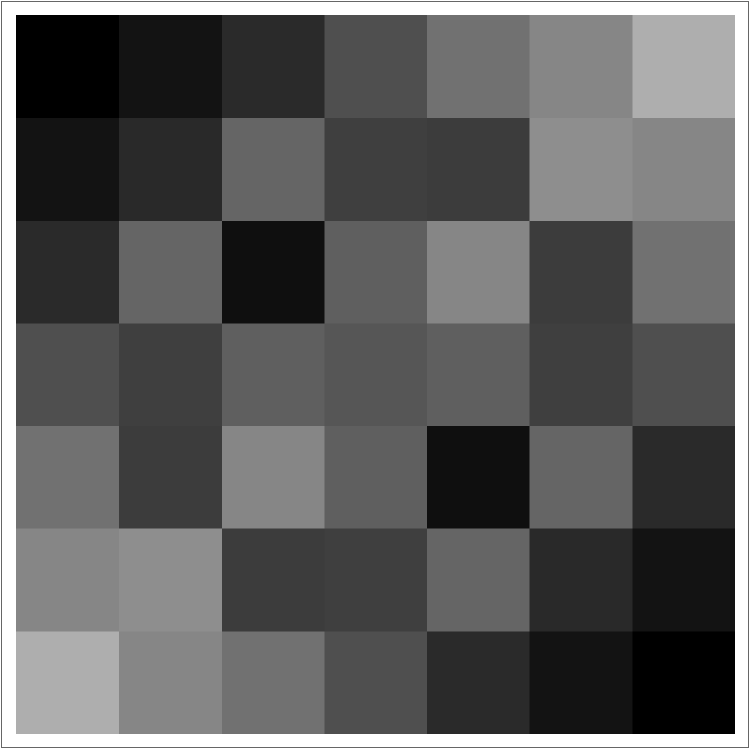}
         \caption{$\la = \ydiagram{4,2,1}$}
         \end{subfigure}
     \hfill\begin{subfigure}[b]{0.1\textwidth}
         \centering
         \includegraphics[frame, width=\textwidth]{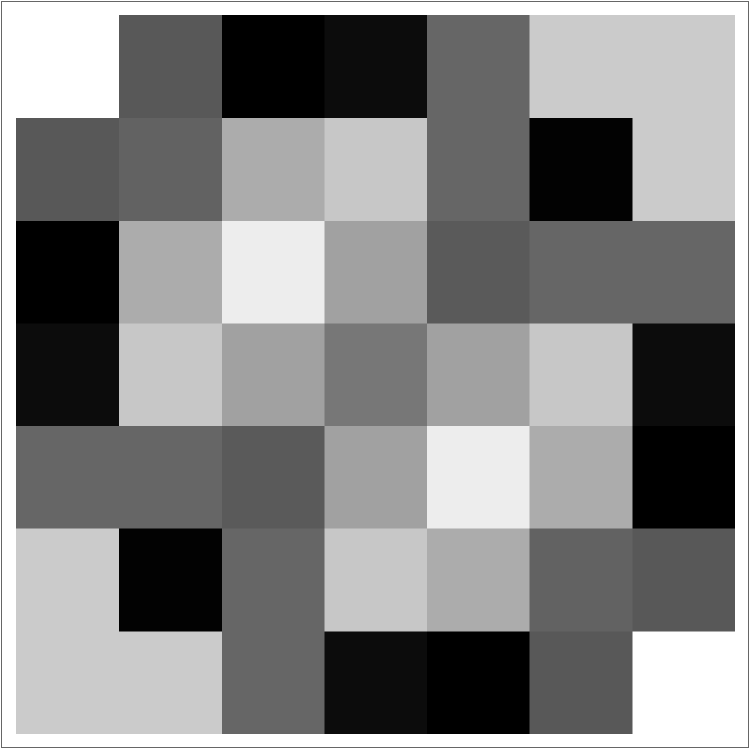}
         \caption{$\la = \ydiagram{3,3,1}$}
         \end{subfigure}
     \hfill\begin{subfigure}[b]{0.1\textwidth}
         \centering
         \includegraphics[frame, width=\textwidth]{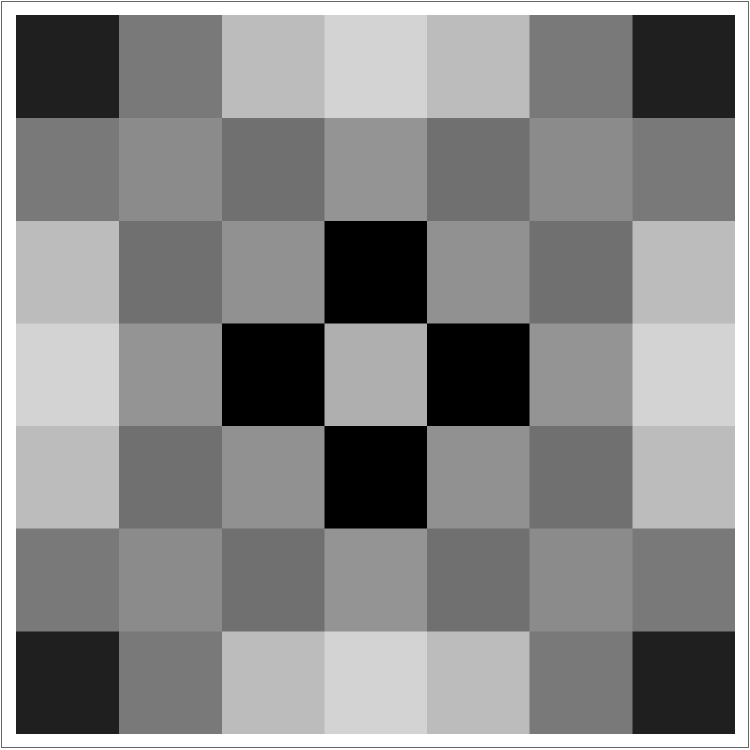}
         \caption{$\la = \ydiagram{4,1,1,1}$}
         \end{subfigure}
     \hfill

     \vspace{.5cm}

     \begin{subfigure}[t]{0.1\textwidth}
         \centering
         \includegraphics[frame, width=\textwidth]{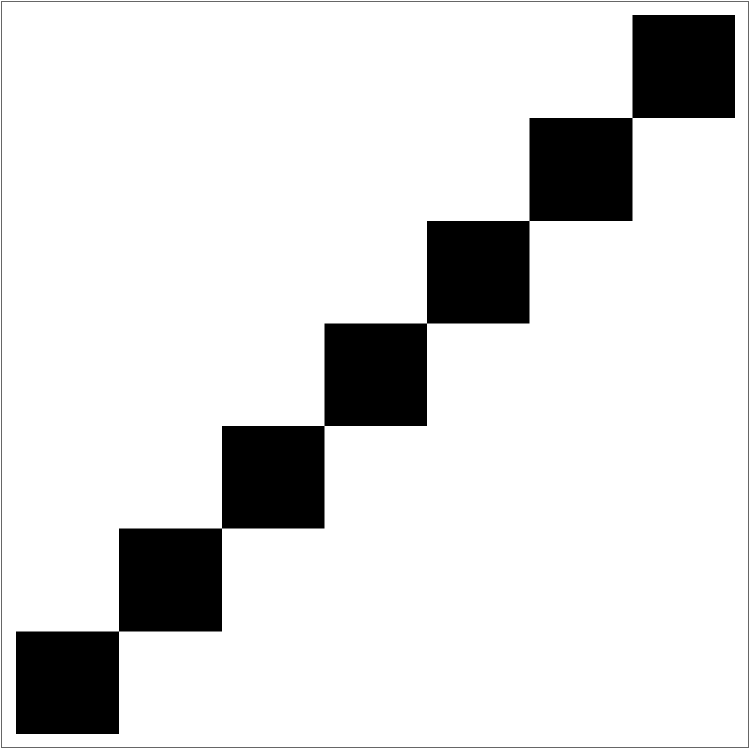}
         \caption{$\la = \ydiagram{1,1,1,1,1,1,1}$}
         \end{subfigure}
     \hfill\begin{subfigure}[t]{0.1\textwidth}
         \centering
         \includegraphics[frame, width=\textwidth]{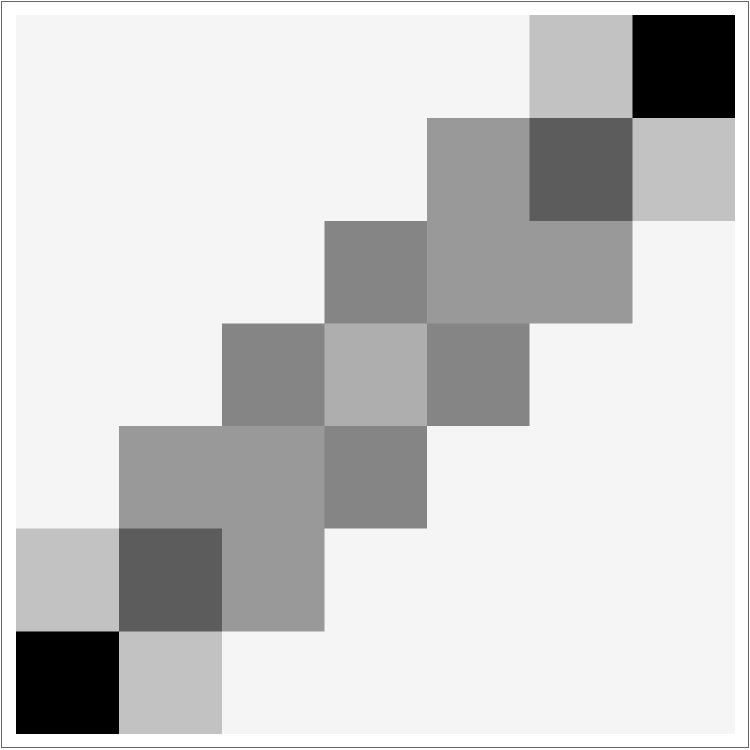}
         \caption{$\la = \ydiagram{2,1,1,1,1,1}$}
         \end{subfigure}
     \hfill\begin{subfigure}[t]{0.1\textwidth}
         \centering
         \includegraphics[frame, width=\textwidth]{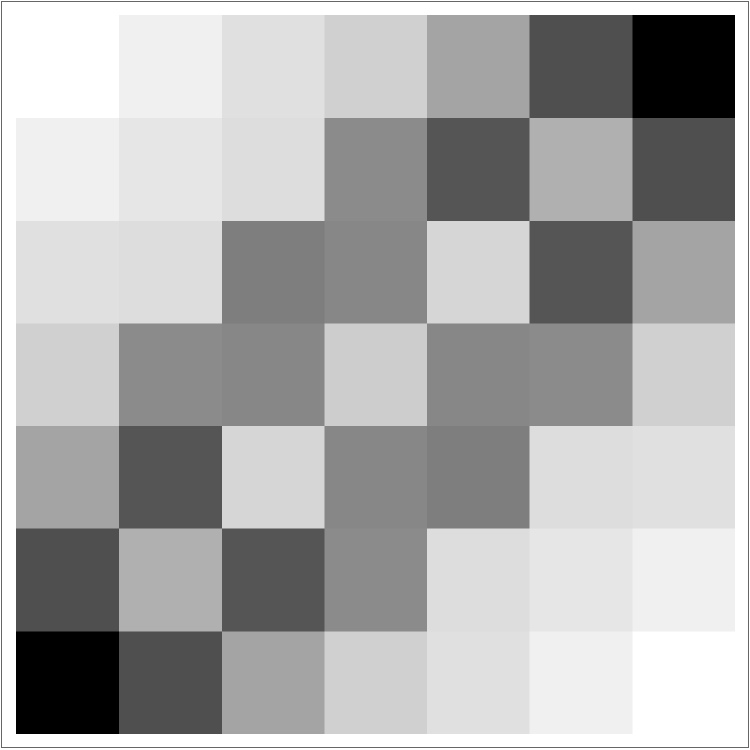}
         \caption{$\la = \ydiagram{2,2,1,1,1}$}
         \end{subfigure}
     \hfill\begin{subfigure}[t]{0.1\textwidth}
         \centering
         \includegraphics[frame, width=\textwidth]{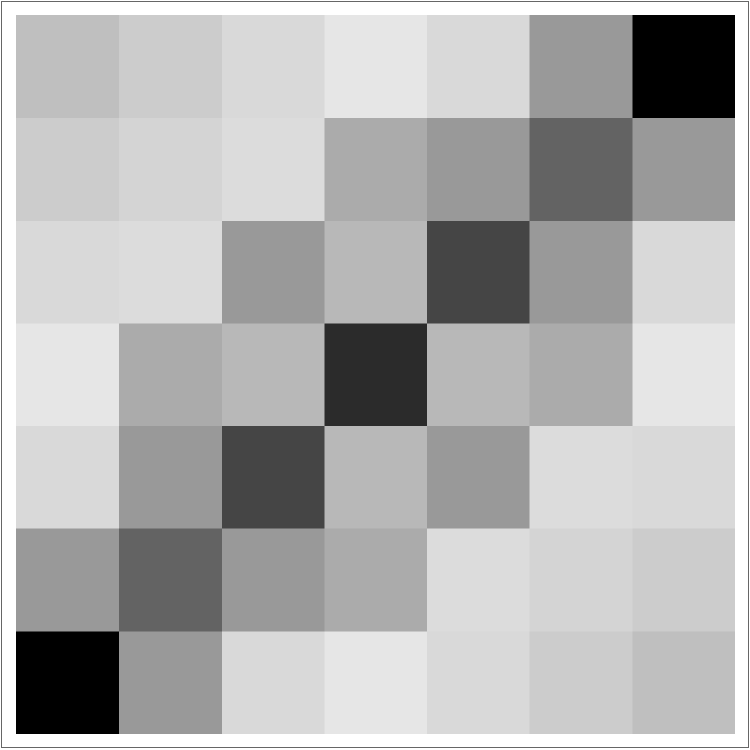}
         \caption{$\la = \ydiagram{3,1,1,1,1}$}
         \end{subfigure}
     \hfill\begin{subfigure}[t]{0.1\textwidth}
         \centering
         \includegraphics[frame, width=\textwidth]{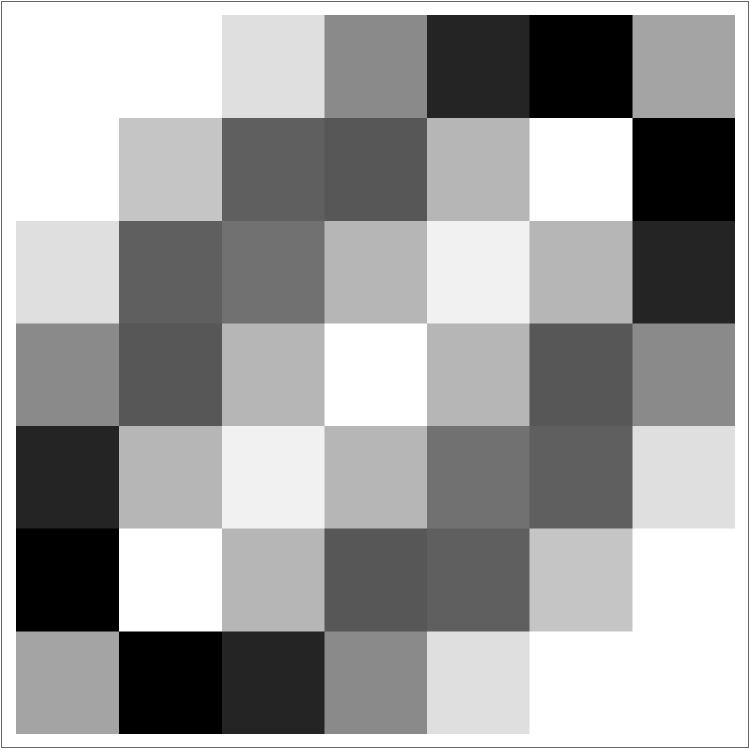}
         \caption{$\la = \ydiagram{2,2,2,1}$}
         \end{subfigure}
     \hfill\begin{subfigure}[t]{0.1\textwidth}
         \centering
         \includegraphics[frame, width=\textwidth]{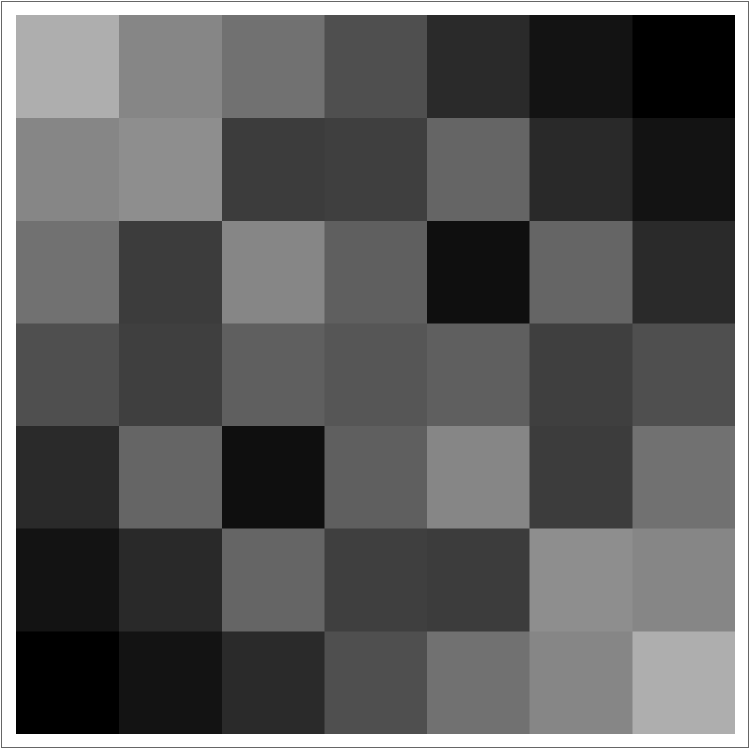}
         \caption{$\la = \ydiagram{3,2,1,1}$}
         \end{subfigure}
     \hfill\begin{subfigure}[t]{0.1\textwidth}
         \centering
         \includegraphics[frame, width=\textwidth]{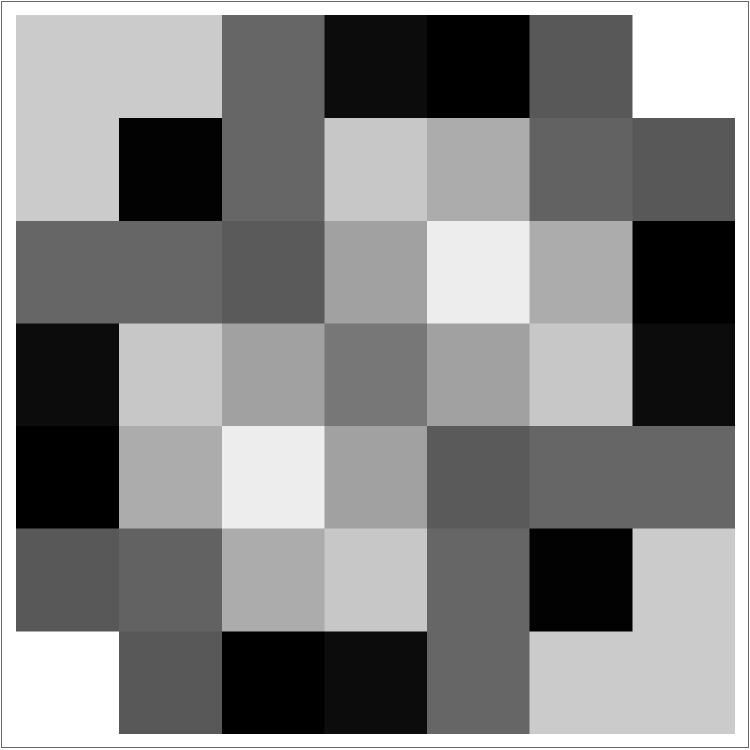}
         \caption{$\la = \ydiagram{3,2,2}$}
         \end{subfigure}
     \hfill\begin{subfigure}[t]{0.1\textwidth}
         \centering
         \caption{}
         \end{subfigure}
     \hfill
        \caption{Heat maps of the matrices $P^\la$, for all partitions $\la$ of $n = 7$.}
        \label{fig:heat maps intro}
\end{figure}

\subsection*{Main result: \texorpdfstring{$P^\la_{ij}$}{P lambda ij} formulas}

In this paper we focus on three families of shapes $\la$:
namely, \emph{hook} shapes $(\ell, 1^{m})$, \emph{two-row} shapes $(\ell, m)$, and \emph{rectangular} shapes $(\ell^m) \coloneqq (\ell, \ldots, \ell)$.
(Due to the inherent symmetries of the RS correspondence (Lemma~\ref{lemma:symmetries}), our solution for two-row shapes automatically yields the solution for two-\emph{column} shapes as well, by reflecting $P^\la$ horizontally or vertically.)
We chose these three families in part because they exhibit a natural and interesting contrast with respect to the complements of their first rows:
for hooks and two-row shapes, the complement is as thin as possible (a single column or row), whereas for rectangular shapes it is as thick as possible.
Immediately below, we preview our three main theorems, for hook, two-row, and rectangular shapes, respectively;
the reader will observe that the three $P^\la_{ij}$ formulas differ significantly from each other, despite our best efforts to render them in a uniform manner. 
% note that unlike the first two, the rectangular result is most cleanly presented as a generating polynomial where $P^\la_{ij}$ is obtained by extracting coefficients.
This is a reflection of the various symmetries in the RS correspondence when restricted to certain shapes.
Recall that $f^\mu$ denotes the number of standard Young tableaux of shape $\mu$.

\begin{theorem}[see Theorem~\ref{thm:hooks}]
\label{thm:hooks in intro}
  Let $\la = (\ell, \: 1^{n-\ell})$ be a hook shape.
  For all $1 \leq i,j \leq n$,
  \[
  P^\la_{ij} = \frac{X(\ell,i,j) + Y(\ell, i, j) +  Y(\bar{\ell}, i, \bar{\jmath}) + Y(\bar{\ell}, \bar{\imath}, j) + Y(\ell, \bar{\imath}, \bar{\jmath})}{\tallmset{\ell}{n-\ell}^2},
  \]
  where
  \begin{align*}
    \textstyle \shortmset{a}{b} & \coloneqq f^{(a, 1^b)} =  \textup{the multiset number } \textstyle \binom{a+b-1}{b}, \\[1ex]
    X(\ell,i,j) & \coloneqq \textstyle 
    \tallmset{K}{i - K} \tallmset{K}{j - K} \tallmset{\ell-K+1}{n - \ell + K - i} \tallmset{\ell - K + 1}{n - \ell + K - j},
    \\[1ex]
    Y(\ell,i,j) & \coloneqq \sum_{1 < k < K} \textstyle \tallmset{k-1}{i-k} \tallmset{k-1}{j-k} \tallmset{\ell - k}{n - \ell + k - i} \tallmset{\ell - k}{n - \ell + k - j}, \\[1ex] 
    K \coloneqq K(\ell,i,j) & \coloneqq (i+j+\ell-n)/2, \quad \textup{and} \quad \bar{\imath} \coloneqq n+1-i.
  \end{align*}.
\end{theorem}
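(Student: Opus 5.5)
We count the pairs $(P,Q)$ of standard Young tableaux of hook shape $\la=(\ell,1^{n-\ell})$ whose inverse image $\sigma=\RS^{-1}(P,Q)$ satisfies $\sigma(i)=j$. Since the number of permutations of shape $\la$ is $(f^\la)^2=\shortmset{\ell}{n-\ell}^2$, it suffices to show that this count equals the numerator in the statement.

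\textbf{Structure of hook-shaped permutations.} A standard tableau of shape $(\ell,1^{n-\ell})$ is determined by the set of entries in its first column below the corner, and $1$ always sits in the corner. By Greene's theorem~\eqref{Greene}, $\sigma$ has shape $\la$ exactly when its longest increasing subsequence has length $\ell$ and its longest decreasing subsequence has length $n-\ell+1$, and these share exactly one entry. The plan is to describe $\sigma$ as a union of an increasing sequence of length $\ell$ and a decreasing sequence of length $n-\ell+1$ meeting in one point, with the points not in either sequence excluded.

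I would index the decomposition by the position of the crossing point $(k,\sigma(k))$ together with the pattern of which other positions are increasing or decreasing. Splitting at the crossing point, the positions before $k$ and the values below $\sigma(k)$ interleave, and each side contributes a factor of the form $\shortmset{a}{b}=\binom{a+b-1}{b}$, i.e.\ a count of hook tableaux on a smaller alphabet. Under this description, the event $\sigma(i)=j$ fixes the point $(i,j)$, and the count factors into four multiset numbers: two for the portion before/after position $i$ and two for the portion below/above value $j$. This matches the product shape of $X$ and of the summands of $Y$.

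\textbf{Case split.} The point $(i,j)$ either lies on the increasing chain or on the decreasing chain, and it lies before or after the crossing point. The four resulting combinations are exchanged by the symmetries of Lemma~\ref{lemma:symmetries}: reversal $i\mapsto\bar\imath$ and complement $j\mapsto\bar\jmath$ both interchange $\ell\leftrightarrow\bar\ell$ (conjugate hook). This explains the four terms $Y(\ell,i,j)$, $Y(\bar\ell,i,\bar\jmath)$, $Y(\bar\ell,\bar\imath,j)$ and $Y(\ell,\bar\imath,\bar\jmath)$.

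So I would compute only one case directly, namely $(i,j)$ on the increasing chain with the crossing at index $k$. The increasing chain passing through both $(i,j)$ and the crossing forces the counts: there are $k-1$ increasing entries before, $i-k$ decreasing entries among the first $i$ positions, and similarly in values. This yields $\shortmset{k-1}{i-k}\shortmset{k-1}{j-k}\shortmset{\ell-k}{n-\ell+k-i}\shortmset{\ell-k}{n-\ell+k-j}$, summed over $k$.

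The boundary value $k=K=(i+j+\ell-n)/2$ is where $(i,j)$ is itself the crossing point, or where the two chains' counts coincide. It produces the separate term $X$, with shifted parameters $K$ and $\ell-K+1$; it occurs only when $i+j+\ell-n$ is even and is otherwise zero.

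\textbf{Main obstacle.} The hardest step is making the decomposition a genuine bijection. The increasing and decreasing chains are not unique, so I would fix a canonical choice: read the corner and first-row entries of $P$ and $Q$ via RS insertion, which identifies which values are bumped. I would then verify via the interleaving lemma that the four factors are independent. Finally, I would check the boundaries of the sum ($1<k<K$), including the exclusion of $k=1$, and the parity condition on $K$ against small cases, for instance the $n=7$ heat maps.
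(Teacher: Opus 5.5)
Your outline has the right architecture: a crossroads term $X$, plus four $Y$-terms carried to one another by $e,\gH,\gV,\gR$, each a product of four hook counts. However, the object you index by is not well defined, and the step that makes the count factor is missing.

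\textbf{There is no single ``crossing point.''} For $\sigma=(2,4,3,5,1)$ of shape $(3,1,1)$, both $(2,4)$ and $(3,3)$ lie on a longest increasing and a longest decreasing subsequence. The paper never chooses one. It uses the intrinsic set $\mathrm{X}(\sigma)$ of all such points, which is automatically $I$-equivariant, and that is what justifies computing one quadrant and transporting the result by symmetry. Your repair, a canonical choice of chains read off from the RS tableaux, is not evidently compatible with that reduction, because $\gH,\gV,\gR$ act on $(\se{P},\se{Q})$ by transposition and evacuation.

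\textbf{The factorization is never justified.} The ``interleaving lemma'' that should make the four factors independent is never formulated. In the paper, independence comes from turning each event into separate conditions on $\se{P}$ and on $\se{Q}$ and using that RS is a bijection onto $\SYT(\la)^2$:
\begin{itemize}
\item Comparing two descriptions of $\{b_1,\dots,b_{R-1}\}$ forces $(i-1)-(K-1)=(n-j)-(\ell-K)$. So a crossroads point lies in $L^{(1)}_K$, and $i$ sits at $\se{P}_{1,K}$ \emph{or} at $\se{P}_{i-K+1,1}$. In the example, $3=\se{P}_{2,1}$ for the crossroads point $(3,3)$. This either/or is why $\se{P}|_{\le i}$ ranges over all of $\SYT(K,1^{i-K})$, giving the parameters $K$ and $\ell-K+1$ in $X$.
\item The strict inequality $(n-j)-(\ell-k)<(i-1)-(k-1)$ shows that a point southeast of the crossroads has index $k<K$ on the increasing chain. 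It then forms a singleton $L^{(1)}_k=\{(i,j)\}$, i.e.\ $\se{P}_{1,k}=i$ and $\se{Q}_{1,k}=j$.
\end{itemize}
Your sketch conflates the summation index $k$ (the position of $(i,j)$ on the increasing chain) with the index of the crossing, and it never derives the range $1<k<K$.

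\textbf{The final formula is asserted, not computed.} Your own reasoning does not give the last two factors you write down. Once $\se{P}_{1,k}=i$, the $n-i$ larger entries are split freely: $\ell-k$ go into the first row and the rest into the column. So the factor is $\binom{n-i}{\ell-k}=\shortmset{\ell-k+1}{n-\ell+k-i}$, not $\shortmset{\ell-k}{n-\ell+k-i}=\binom{n-i-1}{\ell-k-1}$.

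The printed $Y$ is in fact off by one. The paper's $\Phi_{\mathrm{Y}}$ acts on $\se{P}|_{\geq i}$, which has $n-i+1$ boxes, and the proof of Theorem~\ref{thm:fix in intro} uses $\binom{n-i}{m-i+k}$. For $\la=(3,1,1)$ and $(i,j)=(4,5)$ there are six such permutations, e.g.\ $(2,4,3,5,1)$. The stated numerator instead equals $2\binom{-1}{-1}$, which is $0$ under~\eqref{binomial definition} and $2$ under the generalized convention. A correct execution of your plan must arrive at $\ell-k+1$ in the last two factors of $Y$.
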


\begin{theorem}[see Theorem~\ref{thm:two-row}]
\label{thm:two-row in intro}
  Let $\la = (\ell, \: n-\ell)$ be a two-row shape.
  For all $1 \leq i \leq j \leq n$,
  \[
    P^\la_{ij} = P^\la_{ji} = \frac{V(\ell,i,j) + W(\ell,i,j)}{\tallballot{\ell}{n-\ell}^2},
  \]
  where
  \begin{align*}
    \textstyle \shortballot{a}{b} & \coloneqq f^{(a,b)} = \textup{the ballot number } \textstyle \binom{a+b}{b} - \binom{a+b}{b-1} \textup{ for $a \geq b$}, \\[1ex]
    \textstyle \shortballot{a \rightarrow c}{b} & \coloneqq f^{(a+c,b)/(c)} = \textup{the generalized ballot number } \textstyle \binom{a+b}{b} - \binom{a+b}{b-c-1} \textup{ for $a+c \geq b$}, \\[1ex]
    V(\ell,i,j) & \coloneqq 
        \sum_{k=1}^{\ell} \textstyle \tallballot{k-1}{i-k} \tallballot{k-1}{j-k} \tallballot{\ell - k \: \rightarrow j - i}{n - \ell + k - i} \tallballot{\ell - k}{n - \ell + k - j} , \\
    W(\ell,i,j) & \coloneqq \sum_{\mathclap{(p,q) \in \mathscr{K}}} \textstyle \; \tallballot{p - 1}{i - p} \tallballot{q}{j - q - 1} 
    \left( \!\!
      \begin{array}{l}
      \phantom{-} \tallballot{\ell - p \: \rightarrow \: j-i + p - q}{n - \ell + p - i} \tallballot{\ell - q \: \rightarrow \: q - p}{n - \ell + q - j} \\[2ex]
      - \tallballot{\ell - p \: \rightarrow \: j-i + p - q - 1}{n - \ell + p - i} \tallballot{\ell - q \: \rightarrow \: q - p - 1}{n - \ell + q - j}
    \end{array} 
  \right), \\[1ex]
    \mathscr{K} & \coloneqq \Big\{ (p,q) \in [\ell] \times [\ell] : p \leq i \textup{ and } p + q \geq j \textup{ and } 0 \leq q-p < j-i \Big\}.
  \end{align*}
\end{theorem}

\begin{theorem}[see Theorem~\ref{thm:rectangles}]
  \label{thm:rectangles in intro}
  Let $\la = (\ell^m)$ be a rectangular shape, with $n \coloneqq \ell m$.
  For all $1 \leq i,j \leq n$,
  \[
    P^\la_{ij} = \frac{Z(\ell,m,i,j)}{(f^\la)^2},
  \]
  where
  \begin{align*}
    Z(\ell,m,i,j) & \coloneqq
    \sum_{\substack{1 \leq r \leq m, \\ 1 \leq k \leq \ell} \phantom{,}}
    \Bigg[
      \sum_{\substack{\mu,\nu, \\ \pi, \rho \phantom{,}}}
      f^{(k + \mu; k-1; \nu)} f^{(k + \pi; k-1; \rho)} f^{(\ell - \overscriptleftarrow{\nu}; \ell-k; \ell - k - \overscriptleftarrow{\mu})} f^{ ( \ell - \overscriptleftarrow{\rho}; \ell-k; \ell - k - \overscriptleftarrow{\pi})}
    \Bigg], \\[1ex]
    (\mu, \nu) &\in \operatorname{Par}\big((r-1) \times (\ell-k)\big) \times \operatorname{Par}\big((m-r) \times (k-1) \big) \textup{ such that } |\mu| + |\nu| = i - kr, \\ 
    (\pi,\rho) & \in \operatorname{Par}\big( (m-r) \times (\ell-k)\big) \times \operatorname{Par}\big( (r-1) \times (k-1) \big) \textup{ such that } 
    |\pi| + |\rho| = j - k(m+1-r), \\ 
    \operatorname{Par}(a \times b) & \coloneqq \Big\{ \textup{partitions whose Young diagram fits inside an $a \times b$ rectangle} \Big\}, \\
    \ell - \overscriptleftarrow{\nu} & \coloneqq (\ell- \nu_{m-r}, \ldots, \ell - \nu_2, \ell-\nu_1), \textup{ and semicolons denote concatenation}.
\end{align*}
\end{theorem}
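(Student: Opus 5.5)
We first reduce the problem to counting pairs of tableaux. Under $\RS$, a permutation $\sigma$ of shape $\la$ corresponds to a pair $(P,Q)$ of standard Young tableaux of shape $\la$, and there are $(f^\la)^2$ such pairs. So it suffices to show that the number of pairs with $\sigma(i)=j$ equals $Z(\ell,m,i,j)$. For this we would use the classical symmetry of RS: passing to the inverse permutation swaps $P$ and $Q$. Together with the fact that deleting the largest entry $n$ from a permutation corresponds to jeu de taquin/evacuation steps, this lets us locate the letter $j$ sitting in position $i$.

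The key structural tool is the following. For a rectangular shape, the box where the pair $(i,j)$ "lives" can be pinned down by a growth-diagram (Fomin) picture. Each cell $(i,\sigma(i))$ of the $n\times n$ grid corresponds to an edge in the growth diagram. The chain of shapes along row $i$ and column $j$ then factors the counting problem into four quadrants: lower-left, lower-right, upper-left, and upper-right. The growth diagram of a permutation with final shape $\la$ is determined by its boundary chains, namely $Q$ along the top and $P$ along the right side. The constraint $\sigma(i)=j$ says that the local rule at cell $(i,j)$ is the "new box" case.

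For rectangles we would exploit the special property that $\la/\mu$ is rotated-self-complementary. Skew tableaux of shape $(\ell^m)/\mu$ are in bijection with straight tableaux of the $180^\circ$-rotated complement. This is what produces the factors $f^{(\ell-\overscriptleftarrow{\nu};\ell-k;\ell-k-\overscriptleftarrow{\mu})}$.

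The plan in order is as follows.

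1. Let the shape of $\sigma|_{[1,i-1]}$ (after standardization) be $\alpha$, and let the shape of the restriction to values $<j$ be $\beta$. The insertion of $j$ at time $i$ adds a box, say in row $r$ and column $k$. We claim that for a rectangle, the corner structure forces the shapes just before and after step $i$ to have the form $(k+\mu;\,k-1;\,\nu)$-type. That is, rows $1,\dots,r-1$ have length at least $k$, row $r$ has length $k-1$ (before insertion), and the remaining rows have length at most $k-1$. The excess over the "cross" through $(r,k)$ is recorded by the partitions $\mu\subseteq (r-1)\times(\ell-k)$ and $\nu\subseteq (m-r)\times(k-1)$.
2. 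The size condition $|\mu|+|\nu| = i-kr$ comes from counting $i$ boxes. The analogous condition $|\pi|+|\rho|=j-k(m+1-r)$ for the $P$-side comes from applying the same argument to $\sigma^{-1}$ or to the reversed and complemented permutation. This uses Lemma~\ref{lemma:symmetries}, which introduces the row flip $r\mapsto m+1-r$.
3. Count the completions. The number of ways to fill the initial segment of $Q$ is $f^{(k+\mu;k-1;\nu)}$. The number of ways to complete $Q$ to the full rectangle is the number of skew tableaux of shape $(\ell^m)/(\text{shape after step } i)$. Rotating by $180^\circ$ turns this into $f^{(\ell-\overscriptleftarrow{\nu};\,\ell-k;\,\ell-k-\overscriptleftarrow{\mu})}$. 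The same argument applies on the $P$-side with $(\pi,\rho)$.
4. Sum over $(r,k)$ and over the partitions to obtain $Z$, and divide by $(f^\la)^2$.

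The main obstacle is step 1 and its coupling with step 3: we must justify that the $Q$-side and $P$-side data are independent given $(r,k)$. That is, we need the count to factor as a product of four $f$'s rather than some entangled sum. To do this we would use the growth diagram: the four quadrants around cell $(i,j)$ have boundary chains that meet only at the shapes on row $i$ and column $j$. We would then argue that for rectangles the shape at the corner $(i,j)$ and its neighbors is determined by $(r,k,\mu,\nu,\pi,\rho)$. This should follow from complementation: the shape at $(i-1,n)$ and the shape at $(n,j-1)$ are both subshapes of $(\ell^m)$, and their complements are linked through the evacuation symmetry. If this factorization fails in general, we would fall back to proving it only after summing, via a sign-reversing involution. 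We expect the rectangle's complement-rotation symmetry is exactly what makes the direct factorization work, which explains why this case, unlike general $\la$, admits the closed form.
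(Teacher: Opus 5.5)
Your reduction to tableau pairs and your handling of each tableau separately are sound and match the paper. If $i$ occupies box $(r,k)$ of the recording tableau, then the shape of its entries $<i$ is $(k+\mu;k-1;\nu)$ with $\mu\in\operatorname{Par}((r-1)\times(\ell-k))$ and $\nu\in\operatorname{Par}((m-r)\times(k-1))$. The $180^\circ$ rotation-complement then turns the skew remainder into a straight tableau of shape $(\ell-\overscriptleftarrow{\nu};\ell-k;\ell-k-\overscriptleftarrow{\mu})$.

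Notice, though, that Step~1 uses the rectangle only as a bounding box; it is just the description of a corner. The rectangle-specific content of the theorem is the coupling you assume in Step~2: $j=\sigma(i)$ lies in box $(m+1-r,k)$ of the insertion tableau (same column, reflected row) exactly when $i$ lies in box $(r,k)$ of the recording tableau. You attribute this to Lemma~\ref{lemma:symmetries} or to rerunning the argument on $\sigma^{-1}$ or $\gR\sigma$, and none of these yields it:
\begin{itemize}
\item Lemma~\ref{lemma:symmetries} only relates entries of the matrix $P^\la$.
\item Passing to $\sigma^{-1}$ swaps the two tableaux, so ``the same argument'' shows only that $j$ sits in \emph{some} box $(r',k')$ of the insertion tableau. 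Nothing relates $(r',k')$ to $(r,k)$.
\item $\gR$ acts on each tableau separately by evacuation (for rectangles, rotation-complement). It moves $(r,k)$ to $(m+1-r,\ell+1-k)$ within the same tableau, so it never links the two.
\item The insertion step at time $i$ does not help either: $j$ enters row~1 of the insertion tableau and may be bumped down later. The box created at time $i$ says nothing directly about where $j$ ends up.
\end{itemize}
Summing independently over $(r,k)$ and $(r',k')$ would count many pairs that come from no $\sigma$ with $\sigma(i)=j$. Your growth-diagram paragraph does not close this gap. It asserts that the relevant shapes ``should'' be determined via complementation and evacuation, with a sign-reversing involution as fallback, but gives no mechanism forcing $j$ into column $k$ and row $m+1-r$. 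Also, the formula has no signs to cancel.

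The missing ingredient is the paper's Lemma~\ref{lemma:RS rectangles}. Every standard tableau of rectangular shape is admissible in the sense of \cite{EHMS}: reversing every column just reflects the whole tableau vertically, which keeps rows strict. By \cite{EHMS}*{Prop.~3.2} this gives $\se{Q}=\sigma\cdot\se{P}^{\uparrow}$ for every $\sigma\in\fS^\la$. Here, in the paper's convention, $\se{P}$ records positions and $\se{Q}$ records values. Equivalently, $\se{Q}_{m+1-r,k}=\sigma(\se{P}_{r,k})$ for every box.

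With this in hand, the independence problem you flag as the main obstacle disappears without any growth diagrams. $\RS$ is a bijection onto all of $\SYT(\la)\times\SYT(\la)$, and the relation holds for every pair. Hence $\RS(\fS^\la_{ij})$ is exactly the disjoint union over $(r,k)$ of the product sets $\{\se{P}:\se{P}_{r,k}=i\}\times\{\se{Q}:\se{Q}_{m+1-r,k}=j\}$. Each factor splits, with no further constraints, into its restriction to entries $<i$ (resp.\ $<j$) and the rotated complement of the rest.
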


Certain features of the heat maps in Figures~\ref{fig:n15 hooks}--\ref{fig:rectangles} are explained by a glance at the theorems above.
For instance, in Figure~\ref{fig:n15 hooks}, the central ``checkerboard'' phenomenon is explained by the fact that in Theorem~\ref{thm:hooks in intro} the term $X(\ell,i,j)$ vanishes when $K$ is a half-integer. 
Likewise, in Figure~\ref{fig:n15 two-row}, the ``double slit'' phenomenon is explained by the fact that in Theorem~\ref{thm:two-row in intro} the index set $\mathscr{K}$ is empty along the diagonal $i=j$, and is generally smaller when $i$ or $j-i$ or $\ell-j$ is small.
(In Lemma~\ref{lemma:size SV SW} we express the functions $V$ and $W$ in a way that makes the $i \leftrightarrow j$ symmetry more apparent.)
We have verified Theorems~\ref{thm:hooks in intro}--\ref{thm:rectangles in intro} by computer up to $n=13$ (which seems to be the threshold at which the runtime on the author's personal laptop passes from hours into days).

\subsection*{Method of proof}

Sections~\ref{sec:hooks}, \ref{sec:two-row}, and~\ref{sec:rectangles}, devoted to hook and two-row and rectangular shapes, respectively, share a parallel four-part development:

\begin{enumerate}
  \item Lemma~\ref{lemma:size Slij hooks} (resp.,~\ref{lemma:Slij count two-row} and~\ref{lemma:RS rectangles}) is the theoretical heart of the argument.  
  We decompose the set of $\sigma$'s of shape $\la$ according to the relative position of $\sigma(i)=j$:

  \begin{itemize} 

  \item For hook shapes, $\sigma$ forms an ``X'' pattern, meaning the union of a maximal increasing subsequence and a maximal decreasing subsequence.
  We decompose the $\sigma$'s according to whether $\sigma(i)$ lies in the ``crossroads'' of the ``X,'' meaning the intersection of the two maximal subsequences.
  If so, then $\sigma$ contributes 1 to the $X$ formula in Theorem~\ref{thm:hooks in intro};
  if not, then $\sigma$ contributes 1 to one of the $Y$ formulas, depending on which of the four legs of the ``X'' contains $\sigma(i)$.
  
  \item For two-row shapes, $\sigma$ is the disjoint union of a maximal increasing sequence and several descent pairs, determined by Viennot's ``light and shadow'' construction (see Algorithm~\ref{alg:Viennot}).
  We decompose the $\sigma$'s accordingly:
  if $\sigma(i)$ lies in the maximal increasing subsequence, then $\sigma$ contributes 1 to the $V$ formula in Theorem~\ref{thm:two-row in intro}, and otherwise to the $W$ formula.

  \item For rectangular shapes, which are especially amenable to the RS correspondence, there is no advantage in decomposing the $\sigma$'s into subtypes;
  rather, we immediately characterize all $\sigma$'s in terms of the positions of the entries $i$ and $j$ in the corresponding RS tableaux.

  \end{itemize}

  \item Lemma~\ref{lemma:size SX SY} (resp.,~\ref{lemma:size SV SW} and~\ref{lemma:size S lambda rectangles}) is the technical heart of the argument.
  For each subtype $\star$ of $\sigma$'s described above, we exhibit a bijection $\Phi_\star$ of the following form:
  \[
    \{ \text{$\sigma$'s of type $\star$} \} \xrightarrow{\;\;\RS\;\;} \left\{ \begin{array}{c} \text{certain tableau pairs } \\ \text{$(\se{P}, \se{Q} )$ of shape $\la$} \end{array} \right\} \xrightarrow{\;\; \Phi_\star \;\;} \left\{ \begin{array}{c} \text{certain tableau} \\ \text{quadruples $(\se{A}, \se{B}, \se{C}, \se{D})$} \end{array} \right\},
  \]
  where $\se{A}$ (resp., $\se{B}$) is the subtableau of $\se{P}$ (resp., $\se{Q}$) with entries up to $i$ (resp., $j$), and where $\se{C}$ (resp., $\se{D}$) is formed by rearranging and relabeling the remainder of $\se{P}$ (resp., $\se{Q})$.
  This explains all the fourfold products of $f^\mu$'s appearing in Theorems~\ref{thm:hooks in intro}--\ref{thm:rectangles in intro}.
  The five bijections $\Phi_{\mathrm{X}}$, $\Phi_{\mathrm{Y}}$, $\Phi_{\mathrm{V}}$, $\Phi_{\mathrm{W}}$, and $\Phi_{\mathrm{Z}}$ are defined in detail in~\eqref{SX bijection}, \eqref{SY bijection}, \eqref{SV bijection},~\eqref{SW bijection}, and~\eqref{SZ bijection}, respectively.

  \item Theorem~\ref{thm:hooks} (resp.,~\ref{thm:two-row} and~\ref{thm:rectangles}) is the main result of the paper.
  We combine the two previous lemmas to give the desired formula for $P^\la_{ij}$.
  
  \item Remark~\ref{rem:visualize hooks} (resp.,~\ref{rem:visualize two-row} and ~\ref{rem:rectangles}) presents a heuristic for visualizing the contribution toward $P^\la$ coming from each term in our main formulas.
  (The reader can quickly verify these observations by inspecting the main theorems to determine the $(i,j)$'s for which each term vanishes.)
  In particular, a two-row shape $(\ell, n-\ell)$ forces $\sigma$ to have bandwidth $\ell$, and a rectangular shape $(\ell^m)$ forces $\sigma$ to have bandwidth $\ell(m-1)$, whereas hook shapes impose no limitations whatsoever on bandwidth.
\end{enumerate}

Despite the uniform approach outlined above, the three families each required a surprising degree of special handling.
For instance, hooks (being conjugate to other hooks) admit symmetries via evacuation operators which two-row shapes do not have.
Moreover, for hooks and rectangular shapes it suffices to determine just the locations of $i$ and $j$ in the tableaux $\se{P}$ and $\se{Q}$, but for two-row shapes one must also verify certain inequalities among the entries $>i$ and $>j$;
this necessitates the subtraction in the $W$ formula, and for this reason the two-row proof is by far the most delicate of the three.
Having worked through the various shape-dependent details in this paper, we are increasingly curious whether Problem~\ref{problem:Pij} can admit anything close to a uniform solution while maintaining sufficient explicitness.

\subsection*{Fixed points and Wallis integrals}

Our formulas above led to an unexpected result concerning fixed point asymptotics.
Given a permutation $\sigma \in \fS_n$, a key object of interest is its \emph{fixed point set}
\[
  \operatorname{Fix}(\sigma) \coloneqq \Big\{ i : \sigma(i) = i \Big\}.
\]
Note that the trace of our matrix $P^\la$ equals the expected number of fixed points in a permutation of shape~$\la$.
Hence this expected value can be obtained directly from Theorems~\ref{thm:hooks in intro}--\ref{thm:rectangles in intro} simply by taking $\sum_i P^\la_{ii}$.
A more interesting problem is to determine an asymptotic for the \emph{proportion} of fixed points:
in particular, for either hook or two-row shapes $\la$, one can fix the complement of the first row of $\la$ at some size $m$, and then ask for the expected proportion of fixed points as the length $\ell$ of the first row approaches infinity.
(The question does not make sense for rectangular shapes, of course, since letting only the first row grow destroys the rectangular shape.)

Our trace formulas from Theorems~\ref{thm:hooks in intro} and~\ref{thm:two-row in intro} yield the following curious answer to this problem --- the same answer, in fact, for both hook and two-row shapes.
By way of background, the family of \emph{Wallis integrals} was introduced in 1656 by John Wallis~\cite{Wallis} in his development of an infinite product expression for~$\pi$.
In particular, the $k$th Wallis integral is given by
\begin{equation}
  \label{Wallis}
  W_k \coloneqq \int_0^{\pi/2} \sin^k \! x \: dx = \int_0^{\pi/2} \cos^k \! x \: dx, \quad k = 0, 1, 2, \ldots.
\end{equation}

\begin{theorem}
  \label{thm:fix in intro}

  Let $\la = (\ell, \: 1^m)$ or $(\ell, \: m)$, and let $\sigma$ with shape $\lambda$ be chosen uniformly at random. 
  For $m \geq 0$ fixed,
    \[
      \lim_{\ell \rightarrow \infty} \mathbb{E} \! \left[ \frac{\left|\operatorname{Fix}(\sigma) \right|}{n} \right] = 
      \frac{(2m)!!}{(2m+1)!!} = W_{2m+1},
    \]
    where $W_{2m+1}$ is the $(2m+1)$th Wallis integral in~\eqref{Wallis}.
\end{theorem}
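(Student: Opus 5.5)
We compute the trace $\sum_i P^\la_{ii}$ from Theorems~\ref{thm:hooks in intro} and~\ref{thm:two-row in intro}, keep only the terms of order $\ell$, and recognize the normalized limit as a Riemann sum. Set $n=\ell+m$ with $m$ fixed and parametrize the diagonal by $i=\lfloor tn\rfloor$ with $t\in(0,1)$. The expected proportion is $\frac1n\sum_i P^\la_{ii}$. The idea is to show that, for $i$ away from the ends, $P^\la_{ii}$ converges pointwise to an explicit function $g_m(t)$. We then apply dominated convergence, since $0\le P^\la_{ii}\le 1$, to get the limit $\int_0^1 g_m(t)\,dt$. The final step is to check that this integral equals $W_{2m+1}=\frac{(2m)!!}{(2m+1)!!}$.

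\emph{Hook case.} Take $i=j$ in Theorem~\ref{thm:hooks in intro}. Then $K=(2i+\ell-n)/2=i-m/2$ and $i-K=m/2$. The $X$ term therefore survives only when $m$ is even, and it equals $\shortmset{K}{m/2}^2\shortmset{\ell-K+1}{m/2}^2\sim \bigl(K^{m/2}(\ell-K)^{m/2}/(m/2)!\bigr)^2$. The denominator is $\shortmset{\ell}{m}^2\sim \ell^{2m}/m!^2$. The $Y$ terms must be handled as well: for $Y(\ell,i,i)$ the summand has first factors $\shortmset{k-1}{i-k}^2$ with $i-k$ ranging over the gaps. Write $a=i-k$. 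The remaining factors have bottom index $m-a$, so each summand behaves like $\bigl(k^{a}(\ell-k)^{m-a}/(a!(m-a)!)\bigr)^2$. Here $k$ is essentially $i$, because the bottom entries must be nonnegative, so $0\le a\le m$. Thus $Y$ is a finite sum over $a$ rather than a sum of length $\ell$. The four $Y$ terms together with $X$ then give
\[
P^\la_{ii}\longrightarrow g_m(t)=\sum_{a=0}^{m}\binom{m}{a}^2 t^{2a}(1-t)^{2(m-a)}\cdot c_a,
\]
with multiplicities $c_a$ coming from the reflections $\bar\imath$, $\bar\ell$. I expect these to combine so that $g_m(t)=\sum_a\binom{m}{a}^2t^{2a}(1-t)^{2m-2a}$, where the restriction $1<k<K$ assigns each $a$ to exactly one of the five terms. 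Establishing this exact bookkeeping, with each $a$ counted once, is the first delicate step.

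\emph{Two-row case.} Take $i=j$ in Theorem~\ref{thm:two-row in intro}. Then $\mathscr K=\emptyset$, so only $V(\ell,i,i)$ remains, and $\shortballot{x\to 0}{y}=\shortballot{x}{y}$. The summand $\shortballot{k-1}{i-k}^2\shortballot{\ell-k}{n-\ell+k-i}^2$ again forces $a=i-k\in[0,m]$. Since the ballot numbers with bounded bottom index are asymptotically $x^{b}/b!$, exactly like the multiset numbers, we obtain the same limit $g_m(t)$. Here the denominator $\shortballot{\ell}{m}^2\sim\ell^{2m}/m!^2$ matches as well. This explains why the two cases share a single answer.

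\emph{The integral.} It remains to compute $\int_0^1\sum_{a}\binom{m}{a}^2t^{2a}(1-t)^{2m-2a}\,dt$. Using the Beta integral, this becomes $\sum_a\binom{m}{a}^2\frac{(2a)!(2m-2a)!}{(2m+1)!}=\frac{m!^2}{(2m+1)!}\sum_a\binom{2a}{a}\binom{2m-2a}{m-a}$. By the classical identity $\sum_a\binom{2a}{a}\binom{2m-2a}{m-a}=4^m$, this equals $\frac{4^m m!^2}{(2m+1)!}=\frac{(2m)!!^2}{(2m+1)!}=\frac{(2m)!!}{(2m+1)!!}$. The main obstacle is the uniform justification: we must check that the error terms in the asymptotics are $o(1)$ uniformly for $i\in[\epsilon n,(1-\epsilon)n]$, which handles the edges, and we must get the exact index bookkeeping of the $X$/$Y$ terms in the hook case right.
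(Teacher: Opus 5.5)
Your plan is correct, and the step you left open does check out (details below). Your route differs from the paper's in how the sum along the diagonal is evaluated.

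\textbf{Comparison with the paper.} The paper stays at finite $n$. It expands each squared binomial via $\binom{a}{b}^2=\sum_t\binom{a}{b+t}\binom{b+t}{t}\binom{b}{t}$ and sums over $h$ exactly with the convolution $\sum_h\binom{h}{b}\binom{a-h}{c}=\binom{a+1}{b+c+1}$. Only then does it extract, for each $p$, the leading coefficient $\binom{2p}{p}\binom{2m-2p}{m-p}/(2m+1)!$ of $n^{2m+1}$.

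You instead take the pointwise limit $g_m(t)=\lim P^\la_{ii}$ and integrate with Beta integrals, which are the continuum analogue of that convolution identity. Both routes then close with $\sum_a\binom{2a}{a}\binom{2m-2a}{m-a}=4^m$.

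Your version buys a little more:
\begin{itemize}
\item It identifies the limiting diagonal profile of fixed points, and this profile is the same $g_m$ for hooks and two-row shapes.
\item It writes the answer as $\sum_a\binom{m}{a}^2B(2a+1,2m-2a+1)$, which speaks to the paper's question about a Beta-function explanation.
\item It only needs limits of boundedly many summands at fixed $t$, rather than control of lower-order terms summed over all $h$.
\end{itemize}
The uniformity you list as the main obstacle is unnecessary. Since $\frac1n\sum_i P^\la_{ii}=\int_0^1 P^\la_{\lceil tn\rceil,\lceil tn\rceil}\,dt$ with integrand in $[0,1]$, pointwise convergence for each $t\in(0,1)$ suffices by dominated convergence.

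\textbf{The hook-case bookkeeping.} This works with three leading terms, not five. Write $a=i-k$.
\begin{itemize}
\item $X(\ell,i,i)=\binom{i-1}{m/2}^2\binom{n-i}{m/2}^2$ is exactly the $a=m/2$ term.
\item In $Y(\ell,i,i)$, the condition $k<K=i-m/2$ means $a>m/2$, and the last factor forces $a\le m$.
\item $Y(\ell,\bar{\imath},\bar{\imath})$ produces the same summands with $t$ and $1-t$ interchanged, which covers $0\le a<m/2$.
\item The remaining terms $Y(\bar{\ell},i,\bar{\imath})$ and $Y(\bar{\ell},\bar{\imath},i)$ have $K(\bar{\ell},i,\bar{\imath})=m/2+1$. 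They are therefore sums of boundedly many products of degree at most $2m-2$ in $n$, and vanish after dividing by $\shortmset{\ell}{m}^2$. They contribute nothing to the limit, not multiplicities.
\end{itemize}
So $c_a=1$ for every $a$, as you expected. This matches the paper's observation that its terms 1--3 cover $p=m/2$, $p<m/2$, and $p>m/2$ once each, while term 4 is of lower order.
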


We devote Section~\ref{sec:fixed} to the proof of Theorem~\ref{thm:fix in intro}.
It is striking that the result applies to both hooks and two-row shapes, since (below the first row) these two cases are opposite extremes.
This raises the question of whether the result in Theorem~\ref{thm:fix in intro} is in fact shape-independent.

\begin{problem}
  Fix a positive integer $m$, and let $\sigma \in \fS_n$ be chosen uniformly at random from among those permutations whose longest increasing sequence has length $n - m$.
  Is it true that as $n \rightarrow \infty$, the expected proportion of fixed points in $\sigma$ equals $W_{2m+1}$?
\end{problem}

\begin{problem}
  Give a conceptual explanation (perhaps in terms of Beta functions?) for the appearance of the Wallis integral in Theorem~\ref{thm:fix in intro}.
\end{problem}

\subsection*{References and connections}

The RS correspondence is named for Robinson~\cite{Robinson} and later Schensted~\cite{Schensted}, before the landmark generalization by Knuth~\cite{Knuth} produced the RSK correspondence between biwords over $\{1, \ldots, n\}$ and pairs of \emph{semi}standard Young tableaux.
For background, we refer the reader to~\cite{Fulton}*{Ch.~4 and App.~A} or~\cite{Krattenthaler} or~\cite{Sagan}*{Ch.~3}.
Since its inception, many variations and analogues of RSK have been introduced into the literature, any of which could be taken up as a subject for Problem~\ref{problem:Pij}.
In fact, the paper~\cite{EK} can be viewed retrospectively as the solution to the version of Problem~\ref{problem:Pij} where $\fS_n$ is replaced by the set of biwords of length $d$ and where $\la = (d)$ is a one-row shape.

Also similar in spirit to the present paper is the work by Ayyer--Banerjee~\cite{AyyerBanerjee}, who studied the number of inversions in permutations with a prescribed shape.
In~\cite{EHMS} we studied the related question of which shapes can be attained by permutations of a given cycle type.
On the topic of longest increasing subsequences and related asymptotics and probability, see the \emph{Bulletin} article~\cite{AD} by Aldous--Diaconis, and the references therein.
Closely related to our Theorem~\ref{thm:fix in intro} is Diaconis--Fulman--Guralnick~\cite{DFG}, who generalized Montmort's original theorem~\cite{Montmort} giving the limiting distribution for the number of fixed points in  $\sigma \in \fS_n$ as $n \rightarrow \infty$.
Goldstein--Moews~\cite{GoldsteinMoews} solved the same problem in the setting of exchange permutations.

\subsection*{Acknowledgments}

We thank Jeb Willenbring for an early conversation regarding sums of permutation matrices which ultimately led to Problem~\ref{problem:Pij}.

\ytableausetup{smalltableaux}

\section{The RS correspondence and the matrix \texorpdfstring{$P^\la$}{P lambda}}
\label{sec:RS}

\subsection*{Partitions, Young diagrams, and tableaux}

A \emph{partition} is a finite weakly decreasing sequence $\la = (\la_1, \ldots, \la_t)$ of positive integers.
If $\sum_i \la_i = n$, we say that $\la$ is a partition of $n$, written as $\la \vdash n$.
The empty partition is denoted by 0.
We will ignore trailing zeros if they occur, meaning that $(\la_1, \ldots, \la_t, 0, \ldots, 0)$ is considered the same as the partition $(\la_1, \ldots, \la_t)$.
One typically identifies $\la$ with its \emph{Young diagram}, namely the left-justified arrangement of $n$ boxes such that the $i$th row from the top contains $\la_i$ boxes.
The \emph{conjugate} of $\la$, denoted by $\la'$, is the partition whose Young diagram is obtained by transposing that of $\la$ about the main diagonal.
For example:
\[
  \la = (3,2,1,1) = \ydiagram{3,2,1,1} \quad \leadsto \quad \la' = (4,2,1) = \ydiagram{4,2,1}.
\]
It is common to describe partitions in terms of the appearance of their Young diagrams.
Of special interest in the present paper, a partition $\la$ is said to be a \emph{hook} shape if $\la = (\ell, 1^{n-\ell})$ where $1 \leq \ell \leq n$; the shorthand $1^{n - \ell}$ denotes $1, \ldots, 1$ repeated $n - \ell$ times.
We view the single row $(n)$ and the single column $(1^n)$ as degenerate hook shapes.
Similarly, we say that $\la$ is a \emph{two-row} shape if it takes the form $\la = (\ell, n - \ell)$ where $n/2 \leq \ell \leq n$; we view the single row $(n)$ as a degenerate two-row shape.
Finally, $\la$ is said to be a \emph{rectangular} shape if $\la = (\ell^m) \coloneqq (\ell, \ldots, \ell)$ for positive integers $\ell$ and $m$.

A \emph{standard Young tableau of shape $\la \vdash n$} is obtained by assigning each element of $[n] \coloneqq \{1, \ldots, n\}$ to a unique box in the Young diagram of $\la$, in such a way that the entries increase from left to right, and from top to bottom.
(These conditions are referred to as \emph{row strictness} and \emph{column strictness}, respectively.)
We write $\SYT(\la)$ to denote the set of all standard Young tableaux of shape $\la$.
For example:
\begin{equation}
  \label{tableau example}
  \la = (3,2,1,1) \quad \leadsto \quad \ytableaushort{124,37,5,6} \in \SYT(\la).
\end{equation}
Note that if $\la=0$ is the empty partition, then $\SYT(0)$ contains a single element, namely the empty tableau.
On the other hand, if a tuple $\alpha$ is not a partition --- that is, if its coordinates are not weakly decreasing nonnegative integers --- then $\SYT(\alpha) = \varnothing$.
In this paper, we denote tableaux by sans serif capitals in order to distinguish them from matrices.
Given a tableau $\se{T}$, we write $\se{T}_{\!ij}$ for the entry in row $i$ and column $j$;
for example, if $\se{T}$ is the tableau in~\eqref{tableau example}, then $\se{T}_{1,3} = 4$ and $\se{T}_{4,1} = 6$.

More generally, let $\la$ and $\mu$ be two partitions such that the Young diagram of $\la$ contains that of $\mu$, when both are aligned at their upper-left corner.
Then $\la/\mu$ denotes the \emph{skew shape} obtained by removing from $\la$ those boxes belonging to $\mu$.
Just as above, a standard Young tableau of skew shape $\la/\mu$ is obtained by any row-strict and column-strict filling with entries $1, \ldots, |\la|-|\mu|$.
We write $\SYT(\la/\mu)$ for the set of all standard Young tableaux of skew shape $\la/\mu$.
In this paper we primarily consider skew two-row shapes of the form $(d,b)/(c)$, where $d \geq b,c$.
It will be useful to rewrite such a shape in the form $(a+c,b) / (c)$, to emphasize the fact that this shape consists of two rows of lengths $a$ and $b$, where the top row is offset by $c$ boxes; see Lemma~\ref{lemma:f special} below.
As an example where $a=3$ and $b=4$ and $c=2$,
\[
  \begin{ytableau}
  \none & \none & 2 & 4 & 7 \\ 
  1 & 3 & 5 & 6
  \end{ytableau} 
  \; \in \SYT( (5,4) / (2)) = \SYT((3+2, \: 4) / (2)).
\]

The cardinalities of tableau sets are denoted by
\begin{equation}
  \label{f}
  f^\la \coloneqq |\SYT(\la)|, \qquad f^{\la/\mu} \coloneqq |\SYT(\la/\mu)|.
\end{equation}
The number $f^\la$ is famously given by the hook-length formula~\cite{Fulton}*{p.~53}:
\begin{equation}
  \label{hook length formula}
  f^\la = \frac{n!}{\prod_{(i,j) \in \la}  h_\la(i,j)},
\end{equation}
where $(i,j)$ is viewed as the box in row $i$ and column $j$ of the Young diagram of $\la$, and where its hook length $h_\la(i,j)$ equals the number of boxes to the right of $(i,j)$ in its row, plus the number of boxes below $(i,j)$ in its column.
Since in this paper we largely focus on hook and two-row shapes, we will record those specializations in Lemma~\ref{lemma:f special} below;
these numbers will be the building blocks of the formulas in our main results.
As is typical in a purely combinatorial setting, we define the binomial coefficients
\begin{equation}
  \label{binomial definition}
  \textstyle \binom{a}{b} \coloneqq \frac{a!}{b! (a-b)!} \text{ if $a \geq b \geq 0$ are integers, otherwise $0$}.
\end{equation}
We will use the shorthand
\begin{align}
  \textstyle \shortmset{a}{b} & \coloneqq \textstyle \binom{a+b-1}{b}, \label{multiset definition}\\[1ex]
  \textstyle \shortballot{a \rightarrow c}{b} &\coloneqq \textstyle \binom{a+b}{b} - \binom{a+b}{b-c-1} \text{ if $a,b,c \geq 0$ with $a+c \geq b$, otherwise $0$}, \label{skew ballot definition} \\[1ex]
  \textstyle \shortballot{a}{b} &\coloneqq \textstyle \shortballot{a \rightarrow 0}{b}. \label{ballot definition}
\end{align}
The symbol $\shortmset{a}{b}$ agrees with the standard notation for the \emph{multiset numbers}~\cite{Stanley}*{p.~26}, so called because they give the number of $b$-element multisets that can be formed by taking elements from an $a$-element set.
The reason we stipulate in~\eqref{binomial definition} that $a$ and $b$ are integers is that one of our results (Lemma~\ref{lemma:size SX SY}) involves expressions $\shortmset{a}{b}$ in which $a$ or $b$ may be a half-integer, in which case $\shortmset{a}{b} = 0$.
The numbers $\shortballot{a}{b}$ are known as \emph{ballot numbers}~\cite{Riordan}*{p.~130}, since they count the number of \emph{ballot paths} from the origin to the point $(a,b)$; see also~\cite{Carlitz}.
The ballot numbers are sometimes denoted by $B(a,b)$, but in this paper we write $\shortballot{a}{b}$ to parallel $\shortmset{a}{b}$ and to conserve space in equations.

\begin{lemma}
  \label{lemma:f special}
  In the notation of~\eqref{f}--\eqref{ballot definition}, we have
  \[
    \textstyle f^{(a, 1^b)} = \shortmset{a}{b}, \qquad f^{(a,b)} = \shortballot{a}{b}, \qquad f^{(a+c, \: b)/(c)} = \shortballot{a \rightarrow c}{b}.
  \]
\end{lemma}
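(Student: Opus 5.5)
The three identities follow from direct bijections, with the hook-length formula \eqref{hook length formula} as a cross-check.

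\textbf{Hooks.} A standard Young tableau of hook shape $(a,1^b)$ must have $1$ in the corner box. After that it is determined by which $b$ of the remaining $a+b-1$ entries go in the first column below the corner. Any such choice is valid: the column entries are listed in increasing order, the remaining $a-1$ entries fill the rest of the first row in increasing order, and each exceeds the corner entry $1$. Hence
\[
f^{(a,1^b)}=\tbinom{a+b-1}{b}=\shortmset{a}{b}.
\]
Equivalently, the hook lengths are $a+b$ at the corner, $a-1,\dots,1$ along the arm and $b,\dots,1$ down the leg. So $f=(a+b)!/((a+b)(a-1)!\,b!)$, which gives the same number. In the degenerate case $a\ge1$, $b=0$ we get $1$, matching the convention.

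\textbf{Two rows.} The second identity is the case $c=0$ of the third, so it suffices to treat the skew shape $(a+c,b)/(c)$. Encode a filling by the word $w_1\cdots w_{a+b}$, where $w_k$ is U if entry $k$ lies in the top row and D if it lies in the bottom row. Rows increase automatically. The only column constraints occur in columns $c+1,\dots,b$, where the top box is the $(t-c)$-th top entry and the bottom box is the $t$-th bottom entry. Column strictness says the $(t-c)$-th U precedes the $t$-th D. This holds for all such $t$ exactly when every prefix satisfies $\#\mathrm{D}\le\#\mathrm{U}+c$.

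So we count lattice paths with $a$ up-steps and $b$ down-steps that never touch the line $\#\mathrm{D}=\#\mathrm{U}+c+1$. By the reflection principle, a bad path is reflected at its first touching point. This turns it into a path with $a+c+1$ U's and $b-c-1$ D's, and the correspondence is a bijection. The bad paths therefore number $\binom{a+b}{b-c-1}$, and subtracting from $\binom{a+b}{b}$ gives $\shortballot{a\rightarrow c}{b}$.

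Finally, the conventions must match. When $a+c<b$ the shape is not a valid skew shape, so $f^{(a+c,b)/(c)}$ is $0$, and $\shortballot{a\rightarrow c}{b}$ is $0$ by definition. When $b-c-1<0$ the subtracted binomial is $0$ by \eqref{binomial definition}, and indeed no path is ever bad.

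The only delicate step is checking the reflection bijection at the boundary: reflecting after the first touch must land exactly in paths ending at the reflected endpoint. This is the standard André argument, and I expect no real obstacle there.
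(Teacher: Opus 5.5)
Your proof is correct and follows essentially the same route as the paper. The hook case chooses which $b$ entries lie below the corner, and the skew two-row case uses the same encoding of tableaux as lattice paths whose prefixes all satisfy $\#\mathrm{D}\le\#\mathrm{U}+c$ (the paper's paths from $(0,-c)$ to $(a,b-c)$ staying in $y\le x$). The only difference is that you derive the path count $\binom{a+b}{b}-\binom{a+b}{b-c-1}$ yourself by Andr\'e's reflection, whereas the paper cites it as a known result from Krattenthaler's book.
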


\begin{proof}
  For a hook shape, each tableau in $\SYT(a, 1^b)$ is uniquely determined by the entries in its first column below the 1.
  Thus $f^{(a, 1^b)}$ equals the number of ways to choose these $b$ entries from $\{2, \ldots, a+b\}$, namely $\binom{a+b-1}{b} = \shortmset{a}{b}$.

  Next, it is well known~\cite{KrattenthalerBook}*{Thm.~10.3.1} that $\shortballot{a \rightarrow c}{b}$ equals the number of lattice paths from $(0,-c)$ to $(a, b-c)$ inside the region $y \leq x$.
  We exhibit the following bijection between the set of all such paths and the set $\SYT((a+c, \: b)/(c))$.
  Given a path, label its steps $1,2,\ldots,a+b$, then put the labels corresponding to horizontal (resp., vertical) steps in the top (resp., bottom) row of the Young diagram of $(a+c, \: b)/(c)$;
  each column in the resulting tableau is increasing due to the $y \leq x$ condition on the path, and the construction is clearly invertible.
  Hence $f^{((a+c, \: b)/(c)} = \shortballot{a \rightarrow c}{b}$, and the special case $c=0$ yields $f^{(a,b)} = \shortballot{a}{b}$.
\end{proof}

The key mechanism in our bijective proofs will be to decompose a tableau into two parts determined by the size of their entries.
For $\se{T} \in \SYT(\la)$ with $1 \leq i \leq n$, we write
\begin{equation}
  \label{T| def}
  \se{T}|_{ \leq i} \coloneqq \text{the result obtained from $\se{T}$ by deleting all boxes except those with entries $\leq i$}.
\end{equation}
We define $\se{T}|_{<i}$, $\se{T}|_{> i}$, etc., in the same way.
Note that $\se{T}|_{\leq i} \in \SYT(\mu)$, where $\mu$ is the shape determined by those boxes of $\se{T}$ with entries $\leq i$.
Moreover, the result obtained by subtracting $i$ from all entries of $\se{T}|_{>i}$ is an element of $\SYT(\la/\mu)$.

\subsection*{The RS correspondence}

Throughout the paper, $n$ is a positive integer, and $[n] \coloneqq \{1, \ldots, n\}$.
The symmetric group on $n$ letters is denoted by
\[
  \fS_n \coloneqq \Big\{ \text{permutations $\sigma$ of $[n]$} \Big\},
\]
with the group operation given by composition.
We typically write down an element $\sigma \in \fS_n$ using the one-line notation $\sigma = (\sigma(1), \ldots, \sigma(n))$.
Given $\sigma \in \fS_n$, its $n \times n$ permutation matrix $M_\sigma$ is given by
\begin{equation}
  \label{M sigma ij}
  (M_\sigma)_{ij} =
  \begin{cases}
    1 & \text{if } \sigma(i) = j,\\ 
    0 & \text{otherwise}.
  \end{cases}
\end{equation}
Recall that the \emph{support} of a matrix $M$ is the set $\operatorname{supp} M \coloneqq \{(i,j) : M_{ij} \neq 0 \}$.
Resuming the example from~\eqref{example intro}, we have
\begin{align*}
  \sigma &= (2,7,4,6,3,1,5), \\[1ex]
  M_\sigma &= 
  \left[\begin{smallmatrix}
    \textcolor{lightgray}{0} & 1 & \textcolor{lightgray}{0} & \textcolor{lightgray}{0} & \textcolor{lightgray}{0} & \textcolor{lightgray}{0} & \textcolor{lightgray}{0} \\
    \textcolor{lightgray}{0} & \textcolor{lightgray}{0} & \textcolor{lightgray}{0} & \textcolor{lightgray}{0} & \textcolor{lightgray}{0} & \textcolor{lightgray}{0} & 1 \\
    \textcolor{lightgray}{0} & \textcolor{lightgray}{0} & \textcolor{lightgray}{0} & 1 & \textcolor{lightgray}{0} & \textcolor{lightgray}{0} & \textcolor{lightgray}{0} \\
    \textcolor{lightgray}{0} & \textcolor{lightgray}{0} & \textcolor{lightgray}{0} & \textcolor{lightgray}{0} & \textcolor{lightgray}{0} & 1 & \textcolor{lightgray}{0} \\ 
    \textcolor{lightgray}{0} & \textcolor{lightgray}{0} & 1 & \textcolor{lightgray}{0} & \textcolor{lightgray}{0} & \textcolor{lightgray}{0} & \textcolor{lightgray}{0} \\
    1 & \textcolor{lightgray}{0} & \textcolor{lightgray}{0} & \textcolor{lightgray}{0} & \textcolor{lightgray}{0} & \textcolor{lightgray}{0} & \textcolor{lightgray}{0} \\ 
    \textcolor{lightgray}{0} & \textcolor{lightgray}{0} & \textcolor{lightgray}{0} & \textcolor{lightgray}{0} & 1 & \textcolor{lightgray}{0} & \textcolor{lightgray}{0}
    \end{smallmatrix}\right],
\end{align*}
where $\operatorname{supp} M_\sigma$ is depicted by the first diagram in Figure~\ref{fig: ex of RSK}.

The \emph{Robinson--Schensted~\textup{(}RS\textup{)} correspondence} is a bijection 
\begin{equation}
\label{RSK bijection}
\RS : \fS_n \longrightarrow \coprod_{\lambda \vdash n} \SYT(\lambda) \times \SYT(\lambda),
\end{equation}
to be defined explicitly below in Algorithm~\ref{alg:Viennot}.
In particular, we give the geometric ``light and shadow'' construction due to Viennot~\cite{Viennot};
the RS correspondence is also commonly defined via Schensted's row insertion algorithm~\cite{Schensted}, Sch\"utzenberger's jeu de taquin~\cite{Schutzenberger}, or Fomin's growth diagrams~\cites{Fomin86,Fomin95}. 
In the following algorithm, we follow the exposition by Sagan~\cite{Sagan}*{\S3.6}, with the exception that we reverse the roles of the tableaux $\se{P}$ and $\se{Q}$.
(Thus our $\se{P}$ is the ``recording tableau'' and our $\se{Q}$ the ``insertion tableau'';
we do this so that $\sigma(i) = j$ corresponds to the entry $i$ in $\se{P}$ and the entry $j$ in $\se{Q}$.
In light of Lemma~\ref{lemma:symmetries}, this change in convention has no effect on our main result.)
See Figure \ref{fig: ex of RSK} for an illustrated example of the following algorithm.

\noindent \begin{minipage}{\linewidth}

\begin{algorithm}[Viennot's construction of the RS correspondence~\cite{Viennot}]\
\label{alg:Viennot}

\medskip

\hspace{3ex} \textbf{Input:} $\sigma \in \fS_n$.

\hspace{3ex} \textbf{Output:} The standard Young tableaux $\se{P}$ and $\se{Q}$ such that $\RS(\sigma) = (\se{P}, \se{Q})$.

\bigskip

\hspace{3ex} View $[n] \times [n]$ as a grid with matrix coordinates. Imagine a light source in the northwest corner.

\hspace{3ex} Thus each point casts a \emph{shadow} comprising the quarter-plane to its southeast.

\bigskip

\begin{enumerate}
\setlength{\itemsep}{1ex}

  \item Initialize $r=1$, and set $\sigma^{(1)} \coloneqq \operatorname{supp} M_\sigma$.
  (Each $\sigma^{(r)}$ is called the $r$th \emph{skeleton} of $\sigma$.)

  \item \label{L1} Define $L_1^{(r)}$ to be the set of all points in $\sigma^{(r)}$ that do not lie in the shadow of another point.
  The \emph{shadow line} of $L_1^{(r)}$ is the boundary of the combined shadow of its elements.

  \item Recursively define $L_{k+1}^{(r)}$ by applying step~\ref{L1} to $\sigma^{(r)} \setminus (L_1^{(r)} \sqcup \cdots \sqcup L_{k}^{(r)})$ for all $k \geq 1$ until this complement is empty.
  The resulting collection of shadow lines is called the \emph{shadow diagram} of $\sigma^{(r)}$.

  \item \label{fill rows} Fill in row $r$ of $\se{P}$ and $\se{Q}$ as follows:
  \[
    \se{P}_{r,k} = \min\{ i : (i,j) \in L_k^{(r)} \}, \qquad \se{Q}_{r,k} = \min\{ j : (i,j) \in L_k^{(r)} \}.
  \]
  
  \item
  Increment $r$ by 1, and recursively define $\sigma^{(r+1)}$ to be the set of points in $[n] \times [n]$ corresponding to the \scalebox{2}{$\lrcorner$}-shapes in the shadow diagram of $\sigma^{(r)}$.
  Repeat steps~\ref{L1}--\ref{fill rows}.
  The algorithm terminates when $\sigma^{(r)} = \varnothing$.
\end{enumerate}

\end{algorithm}

\end{minipage}

\begin{figure}[t] 
    \centering

  \tikzstyle{dot}=[circle,fill=black, minimum size = 6pt, inner sep=0pt]
\tikzstyle{box}=[rectangle,draw,line width=.5pt, minimum size = 4.5pt, inner sep=0pt]

\begin{tikzpicture}[scale=.3]
    \draw (1,1) grid (7,7);
    \node at (2,7) [dot] {};
    \node at (7,6) [dot] {};
    \node at (4,5) [dot] {};
    \node at (6,4) [dot] {};
    \node at (3,3) [dot] {};
    \node at (1,2) [dot] {};
    \node at (5,1) [dot] {};
    \node [below] at (4,0) {$\sigma^{(1)} \coloneqq \operatorname{supp} M_\sigma$};
    \node [right] at (9,8) {};
    \node at (1,-7) {};
\end{tikzpicture}
\begin{tikzpicture}[scale=.3]
    \draw (1,1) grid (7,7);
    %%L1
    \draw [ultra thick] (8,7) node [right] {1} -- ++(-6,0) node [dot] {} -- ++(0,-5) node [box] {} -- ++(-1,0) node [dot] {} -- ++(0,-2) node [below] {1}
    %%L2
    (8,6) node [right] {2} -- ++(-1,0) node [dot] {} -- ++ (0,-1) node [box] {} -- ++(-3,0) node [dot] {} -- ++(0,-2) node [box] {} -- ++(-1,0) node [dot] {} -- ++(0,-3) node [below] {3} 
    %%L3
    (8,4) node [right] {4} -- ++(-2,0) node [dot] {} -- ++(0,-3) node [box] {} -- ++(-1,0) node [dot] {} -- ++(0,-1) node [below] {5};
    \node [below] at (4,-2) {$\left( \ytableaushort{124}, \; \ytableaushort{135} \right)$};
    \node at (1,-7) {};
    \node [above] at (4,8) {$r=1$};
\end{tikzpicture}
\begin{tikzpicture}[scale=.3]
    \draw (1,1) grid (7,7);
    %%L1
    \draw [ultra thick] (8,5) node [right] {3} -- ++(-1,0) node [dot] {} -- ++(0,-2) node [box] {} -- ++(-3,0) node [dot] {} -- ++(0,-1) node [box] {} -- ++(-2,0) node [dot] {} -- ++(0,-2) node [below] {2}
    %%L2
    (8,1) node [right] {7} -- ++(-2,0) node [dot] {} -- ++ (0,-1) node [below] {6};
    \node [below] at (4,-2) {$\left( \ytableaushort{124,37}_{\textstyle,} \; \ytableaushort{135,26} \right)$};
    \node at (1,-7) {};
    \node [above] at (4,8) {$r=2$};
\end{tikzpicture}
\begin{tikzpicture}[scale=.3]
    \draw (1,1) grid (7,7);
    %%L1
    \draw [ultra thick] (8,3) node [right] {5} -- ++(-1,0) node [dot] {} -- ++(0,-1) node [box] {} -- ++(-3,0) node [dot] {} -- ++(0,-2) node [below] {4};
    \node [below] at (4,-2) {$\left( \ytableaushort{124,37,5}_{\textstyle ,} \; \ytableaushort{135,26,4} \right)$};
    \node at (1,-7) {};
    \node [above] at (4,8) {$r=3$};
\end{tikzpicture}
\begin{tikzpicture}[scale=.3]
    \draw (1,1) grid (7,7);
    %%L1
    \draw [ultra thick] (8,2) node [right] {6} -- ++(-1,0) node [dot] {} -- ++(0,-2) node [below] {7};
    \node [below] at (4,-2) {$\left( \ytableaushort{124,37,5,6}_{\textstyle ,} \; \ytableaushort{135,26,4,7} \right)$};
    \node at (1,-7) {};
    \node [above] at (4,8) {$r=4$};
\end{tikzpicture}
    
    \caption{Example of Viennot's ``light and shadow'' construction of the RS correspondence (Algorithm~\ref{alg:Viennot}), applied to the permutation $\sigma = (2,7,4,6,3,1,5)$.
    For each $r$ we display the shadow diagram of the skeleton $\sigma^{(r)}$, with the shadow lines traced in bold.
    The boxes at the \scalebox{2}{$\lrcorner$}-shapes indicate the next skeleton $\sigma^{(r+1)}$.
    Along the eastern and southern edge we label the values defined in step~\ref{fill rows}, which are the entries in row $r$ of $\se{P}$ and $\se{Q}$, respectively.
    The final pair of tableaux gives $(\se{P}, \se{Q}) = \RS(\sigma)$, revealing that $\sigma$ has shape $\lambda = (3,2,1,1)$.}
    \label{fig: ex of RSK}
\end{figure}

\medskip

It is clear from Algorithm~\ref{alg:Viennot} that the tableaux $\se{P}$ and $\se{Q}$ share a common shape, say $\la \vdash n$, said to be the \emph{shape} of $\sigma$.
This induces a decomposition of $\fS_n$ into the following classes $\fS^\la$:
\begin{equation}
  \label{S n lambda definition}
  \fS_n = \coprod_{\la \vdash n} \fS^\la, \quad \text{where } \fS^\la \coloneqq \Big\{ \sigma \in \fS_n : \text{$\sigma$ has shape $\la$} \Big\}.
\end{equation}
By Greene's theorem~\cite{Greene}*{Thm.~3.1}, if $\sigma \in \fS^\la$, then for all $1 \leq m \leq n$,
\begin{align}
\label{Greene}
\begin{split}
  \sum_{i=1}^m \la_i &= \text{the length of the longest union of $m$ increasing subsequences of $\sigma$}, \\ 
  \sum_{j=1}^m \la'_j &= \text{the length of the longest union of $m$ decreasing subsequences of $\sigma$}.
\end{split}
\end{align}
Recalling the notation $f^\la$ from~\eqref{f}, we have the following by~\eqref{RSK bijection}--\eqref{S n lambda definition}:
\begin{equation}
  \label{size S n lambda}
  \left|\fS^\la \right| = \left| \RS(\fS^\la) \right| = \big| \SYT(\la) \times \SYT(\la) \big| = (f^\la)^2.
\end{equation}
Of primary interest in this paper, for all $1 \leq i, j \leq n$ we define the subset
\begin{equation}
    \label{fS lambda ij}
    \fS^\la_{ij} \coloneqq \Big\{ \sigma \in \fS^\la : \sigma(i) = j \Big\}.
\end{equation}

\subsection*{Sch\"utzenberger involutions}

We denote the ``complement'' of $i \in [n]$ by
\begin{equation}
  \label{bar}
  \bar{\imath} \coloneqq n + 1 - i.
\end{equation}
Consider the element $\omega_0 \in \fS_n$ given by $\omega_0(i) = \bar{\imath}$, that is, $\omega_0 = (n, \ldots, 3,2,1)$.
The maps given by right- and left-multiplication by $\omega_0$ are involutions of $\fS_n$.
Along with inversion, these involutions generate the following group $G$ acting naturally on $\fS_n$:
\begin{equation}
\label{G}
  G \coloneqq \Big\langle \gT : \sigma \mapsto \sigma^{-1}, \quad \gH :  \sigma \mapsto \sigma \omega_0, \quad \gV :  \sigma \mapsto \omega_0 \sigma \Big\rangle.
\end{equation}
The reason for our notation $\gT$, $\gH$, $\gV$ is the fact that $G$ is isomorphic to the dihedral group of order $8$, which is clear from its action $g: M_\sigma \mapsto M_{g \sigma}$ on permutation matrices~\eqref{M sigma ij}:
in particular, $\gT$ is a reflection about the main diagonal (i.e., the transpose), $\gH$ is a horizontal reflection, and $\gV$ is a vertical reflection.
Note that the product 
\begin{equation*}
  \label{G R}
  \gR \coloneqq \gH \gV = \gV \gH
\end{equation*}
is another involution acting on permutations via $\gR : \sigma \mapsto \omega_0 \sigma \omega_0$, and on permutation matrices by a 180-degree rotation.
Explicitly, the following are equivalent:
\begin{align}
  \label{G action}
  \begin{split}
  \sigma(i) &= j, \\
  \gT \sigma (j) & = i, \\
  \gH \sigma (i) &= \bar{\jmath}, \\ 
  \gV \sigma ( \bar{\imath}) &= j, \\
  \gR \sigma (\bar{\imath}) &= \bar{\jmath}.
  \end{split}
\end{align}
That is, if we write $\sigma$ in one-line notation $\sigma = (\sigma(1), \ldots, \sigma(n))$, then $\gH \sigma$ is obtained by taking the pointwise complement of $\sigma$, and $\gV \sigma$ is obtained by reversing $\sigma$,  and $\gR \sigma$ is obtained by doing both.
Below we highlight the well-known interactions between the RS correspondence and the group $G$, due to Schensted~\cite{Schensted} and Sch\"utzenberger~\cites{Schutzenberger63,Schutzenberger}.
In particular, the following are equivalent~\cite{Sagan}*{Thms.~3.2.3, 3.6.6, 3.9.4}:
\begin{align}
  \label{RS involutions}
  \begin{split}
  \RS(\sigma) & = (\se{P}, \se{Q}) \text{ with } \sigma \in \fS^\la, \\
  \RS(\gT \sigma) &= (\se{Q}, \se{P}) \text{ with } \sigma \in \fS^\la, \\ 
  \RS(\gH \sigma ) &= (\se{P}', \epsilon(\se{Q}')) \text{ with } \sigma \in \fS^{\la'}, \\
  \RS( \gV \sigma) &= ( \epsilon(\se{P}'), \se{Q}') \text{ with } \sigma \in \fS^{\la'}, \\
  \RS(\gR \sigma ) &= (\epsilon(\se{P}), \epsilon(\se{Q})) \text{ with } \sigma \in \fS^\la, \\ 
  \end{split}
\end{align}
where $\epsilon$ denotes Sch\"utzenberger's evacuation operator (itself an involution on $\SYT(\la)$; see~\cite{Schutzenberger63} or~\cite{Sagan}*{\S3.9}), and where $\se{P}'$ denotes the transpose of a tableau $\se{P}$.
Combining each line in~\eqref{G action} with its corresponding line in~\eqref{RS involutions}, we obtain the following bijections with the set $\fS^\la_{ij}$ defined in~\eqref{fS lambda ij}:
\begin{align}
\label{S ij bijections}
\begin{split}
  \fS^\la_{ij} & \xleftrightarrow{\quad \gT \quad} \fS^\la_{ji}, \\ 
  \fS^\la_{ij} & \xleftrightarrow{\quad \gH \quad} \fS^{\la'}_{i \bar{\jmath}}, \\
  \fS^\la_{ij} & \xleftrightarrow{\quad \gV \quad} \fS^{\la'}_{\bar{\imath} j}, \\ 
  \fS^\la_{ij} & \xleftrightarrow{\quad \gR \quad} \fS^{\la}_{\bar{\imath} \bar{\jmath}}. \\
  \end{split}
\end{align}

\subsection*{The matrix \texorpdfstring{$P^\la$}{P lambda}}

Our main object of interest is the $n \times n$ matrix $P^\la$ whose $(i,j)$ entry gives the conditional probability
\begin{equation}
  \label{P lambda}
  P^\la_{ij} \coloneqq \frac{| \fS^\la_{ij} |}{| \fS^\la |} = \mathbb{P}\Big( \sigma(i) = j \; \Big| \; \sigma \in \fS^\la \Big),
\end{equation}
where $\sigma \in \fS_n$ is chosen uniformly at random.
Note that $P^\la$ is doubly stochastic.
Due to our combinatorial approach, we will also work with the nonnegative integer matrix $\check{P}^\la$ defined via
\begin{equation}
  \label{P lambda tilde ij}
  \check{P}^\la_{ij} = \left| \fS^\la_{ij} \right|.
\end{equation}
(The notation is meant to suggest that $\check{P}^\la$ is the ``un-normalized'' form of $P^\la$.)
In other words,
\begin{equation}
\label{P lambda tilde}
  \check{P}^\la = \left| \fS^\la \right| \cdot P^\la = \sum_{\mathclap{\sigma \in \fS^\la}} M_\sigma,
\end{equation}
where the second equality follows from the definitions~\eqref{M sigma ij} and~\eqref{fS lambda ij}.

A priori, the matrix $P^\la$ inherits the symmetries from the action of the group $G$ given in~\eqref{G} and~\eqref{S ij bijections}, which we record in the following lemma.
Recall that a square matrix is said to be \emph{symmetric} if it is symmetric about its main diagonal;
\emph{persymmetric} if it is symmetric about its antidiagonal;
and \emph{bisymmetric} if it is both symmetric and persymmetric.

\begin{lemma}
  \label{lemma:symmetries}
  Let $n$ be a positive integer, with $\la \vdash n$.
  The matrix $P^\la$ in~\eqref{P lambda} is bisymmetric, and is the vertical (equivalently, horizontal) reflection of the matrix $P^{\la'}$.
  More precisely, for all $1 \leq i,j \leq n$,
  \[ 
    P^\la_{ij} = P^\la_{ji} = P^{\la'}_{i \bar{\jmath}} = P^{\la'}_{\bar{\imath} j} = P^\la_{\bar{\imath} \bar{\jmath}}.
  \]
\end{lemma}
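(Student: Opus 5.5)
The plan is to read off each equality from the bijections in~\eqref{S ij bijections} and the cardinality formula~\eqref{size S n lambda}. By definition~\eqref{P lambda}, $P^\la_{ij} = |\fS^\la_{ij}|/|\fS^\la|$, so each claimed equality will follow once we know two things: the numerators agree via a bijection, and the denominators agree.

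First, the denominators. By~\eqref{size S n lambda} we have $|\fS^\la| = (f^\la)^2$. Transposing a standard Young tableau gives a bijection $\SYT(\la) \to \SYT(\la')$, since the transpose of a row- and column-strict filling is again row- and column-strict. Hence $f^\la = f^{\la'}$, and therefore $|\fS^\la| = |\fS^{\la'}|$.

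Second, the numerators. The four maps $\gT,\gH,\gV,\gR$ are involutions of $\fS_n$, hence bijections. By~\eqref{G action}, each one carries the condition $\sigma(i)=j$ to the corresponding condition with $(j,i)$, $(i,\bar\jmath)$, $(\bar\imath,j)$, or $(\bar\imath,\bar\jmath)$ respectively. By~\eqref{RS involutions}, $\gT$ and $\gR$ preserve the shape, while $\gH$ and $\gV$ send shape $\la$ to $\la'$: the tableaux produced are $\se{P}'$, $\epsilon(\se{Q}')$, and so on, which have the conjugate shape. This is exactly the content of~\eqref{S ij bijections}, so we get $|\fS^\la_{ij}| = |\fS^\la_{ji}| = |\fS^{\la'}_{i\bar\jmath}| = |\fS^{\la'}_{\bar\imath j}| = |\fS^\la_{\bar\imath\bar\jmath}|$. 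Dividing by the common denominator gives the displayed chain of equalities.

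Finally, the matrix interpretation is a direct reading of these equalities:
\begin{itemize}
\item $P^\la_{ij}=P^\la_{ji}$ is symmetry of $P^\la$.
\item $P^\la_{ij}=P^\la_{\bar\imath\bar\jmath}$ combined with symmetry gives $P^\la_{ij}=P^\la_{\bar\jmath\bar\imath}$, which is persymmetry. So $P^\la$ is bisymmetric.
\item $P^\la_{ij}=P^{\la'}_{\bar\imath j}$ says that $P^\la$ is the vertical reflection of $P^{\la'}$.
\item $P^\la_{ij}=P^{\la'}_{i\bar\jmath}$ says that $P^\la$ is also the horizontal reflection of $P^{\la'}$. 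The two reflections agree because $P^{\la'}$ is itself invariant under the $180^\circ$ rotation $P^{\la'}_{ij}=P^{\la'}_{\bar\imath\bar\jmath}$.
\end{itemize}

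There is no real obstacle here. The only substantive input is the list of Schützenberger identities~\eqref{RS involutions}, which are cited, and in particular the fact that $\gH$ and $\gV$ conjugate the shape. The one point to check is that we can use the shape statement alone and never need the specific form of the evacuated tableaux.
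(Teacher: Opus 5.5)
Your proposal is correct and follows the paper's proof: the bijections in~\eqref{S ij bijections} equate the counts $|\fS^\la_{ij}| = |\fS^\la_{ji}| = |\fS^{\la'}_{i\bar{\jmath}}| = |\fS^{\la'}_{\bar{\imath} j}| = |\fS^\la_{\bar{\imath}\bar{\jmath}}|$, and normalizing gives the chain, with persymmetry obtained by combining the first and last equalities. Your explicit check that $|\fS^\la| = |\fS^{\la'}|$ (via $f^\la = f^{\la'}$) fills in a normalization step the paper passes over when it just says $P^\la$ is a scalar multiple of $\check{P}^\la$.
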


\begin{proof} 
  By~\eqref{P lambda tilde ij} we have $\check{P}^\la = | \fS^\la_{ij} |$, and thus the bijections in~\eqref{S ij bijections} yield $\check{P}^\la_{ij} = \check{P}^\la_{ji} = \check{P}^{\la'}_{i \bar{\jmath}} = \check{P}^{\la'}_{\bar{\imath} j} = \check{P}^\la_{\bar{\imath} \bar{\jmath}}$.
  (Note that bisymmetry means $\check{P}^\la_{ij} = \check{P}^\la_{\bar{\jmath} \, \bar{\imath}}$, which follows from combining the first and last equality.)
  Since $P^\la$ is a scalar multiple of $\check{P}^\la$, the result now follows.
\end{proof}

\section{Main result for hook shapes}
\label{sec:hooks}

Throughout this section, we fix a positive integer $n$, and we assume that $\la = (\ell, 1^{n-\ell})$, where $1 \leq \ell \leq n$.
By Greene's theorem~\eqref{Greene}, we have $\sigma \in \fS^\la$ if and only if $\sigma$ contains an increasing subsequence of length $\ell$ and a decreasing subsequence of length $\bar{\ell} \coloneqq n + 1 - \ell$.
Note that these two subsequences are not necessarily uniquely determined;
moreover, since $\ell + \bar{\ell} = n+1$, any two such subsequences intersect in exactly one term.
With this in mind, for all $\sigma \in \fS^\la$ we define
\begin{align}
  \mathrm{X}(\sigma) &\coloneqq \left\{ (\ell, i, j) : 
  \begin{array}{l} 
  \text{$\sigma(i) = j$ is contained in both} \\ 
  \text{an increasing subsequence of length $\ell$} \\ 
  \text{and a decreasing subsequence of length $\bar{\ell}$}
  \end{array}
   \right\}, \label{X} \\[1ex]
   \mathrm{Y}(\sigma) &\coloneqq \left\{ (\ell, i, j) : 
  \begin{array}{l} 
  \text{$\sigma(i) = j$, and} \\ 
  \text{$i > i'$ and $j > j'$ for all $(\ell, i', j') \in \mathrm{X}(\sigma)$}
  \end{array}
   \right\}. \label{Y}
\end{align}
In the triples $(\ell,i,j)$ above, the $\ell$ simply keeps track of the fact that $\sigma$ has shape $(\ell, 1^{n-\ell})$.
If we visualize the support of the permutation matrix $M_\sigma$, then we see that $(\ell,i,j) \in \mathrm{X}(\sigma)$ precisely when $(i,j)$ lies in the ``crossroads'' of this support, between a northwest-to-southeast pattern of length $\ell$ and a northeast-to-southwest pattern of length $\bar{\ell}$.
(Hence the ``X'' notation, to evoke this crossroads.)
Meanwhile, $(\ell,i,j) \in \mathrm{Y}(\sigma)$ precisely when $(i,j)$ lies in the support of $M_\sigma$ to the southeast of the crossroads.

Throughout this section, we adopt the shorthand
\begin{align}
  \fS(\ell, i, j) &\coloneqq \fS^{(\ell, 1^{n - \ell})}_{ij}, \label{Slij definition} \\ 
  \fS_{\mathrm{X}}(\ell, i, j) &\coloneqq \Big\{ \sigma \in \fS( \ell, i, j) : 
  (\ell,i,j) \in \mathrm{X}(\sigma)
    \Big\}, \label{SX} \\
  \fS_{\mathrm{Y}}(\ell, i, j) &\coloneqq \Big\{ \sigma \in \fS( \ell, i, j) : 
  (\ell,i,j) \in \mathrm{Y}(\sigma)
    \Big\}. \label{SY}
\end{align}
Recall the 8-element group $G$ from~\eqref{G}.
Since $\la = (\ell, 1^{n - \ell})$ if and only if $\la' = (\bar{\ell}, 1^{n - \bar{\ell}})$, with the bar notation as in~\eqref{bar}, we can rewrite the bijections~\eqref{S ij bijections} as follows:
\begin{align}
  \label{S ij bijections hook notation}
  \begin{split}
  \fS (\ell, i, j) & \xleftrightarrow{\quad \gT \quad} \fS ( \ell, j, i), \\ 
  \fS (\ell, i, j) & \xleftrightarrow{\quad \gH \quad} \fS ( \bar{\ell}, i, \bar{\jmath}), \\
  \fS (\ell, i, j) & \xleftrightarrow{\quad \gV \quad} \fS ( \bar{\ell}, \bar{\imath}, j), \\
  \fS ( \ell, i, j)  & \xleftrightarrow{\quad \gR \quad} \fS(\ell, \bar{\imath}, \bar{\jmath}).
  \end{split}
\end{align}
This induces an action by $G$ on the set $\{ \ell, \bar{\ell} \} \times \{ i, \bar{\imath}\} \times \{ j, \bar{\jmath}\}$, defined so that for all $g \in G$, 
\begin{equation}
  \label{G action on lij}
  \fS( \ell, i, j) \xleftrightarrow{\quad g \quad} \fS( g \cdot (\ell, i, j)).
\end{equation}
From now on we deal primarily with the subgroup of involutions
\begin{equation}
  \label{I}
  I \coloneqq \Big\{ e, \: \gH, \: \gV, \: \gR \Big\} \subset G.
\end{equation}

\begin{lemma}
  \label{lemma:size Slij hooks}
  Let $1 \leq \ell, i, j \leq n$, with all notation as in~\eqref{Slij definition}--\eqref{I}.
  We have
  \[
    \big| \fS(\ell, i, j) \big| = \left| \fS_{\mathrm{X}}(\ell, i, j) \right| + \sum_{g \in I} \big| \fS_{\mathrm{Y}} (g \cdot (\ell, i, j)) \big|.
  \]
\end{lemma}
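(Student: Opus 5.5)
We will prove the identity by partitioning $\fS(\ell,i,j)$ into five disjoint pieces according to where the point $(i,j)$ sits relative to the ``X'' formed by $\sigma$. Then we use the involutions in $I$ to identify four of these pieces with sets of the form $\fS_{\mathrm{Y}}(\cdot)$.

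First we set up the geometry. Fix $\sigma \in \fS(\ell,i,j)$. By Greene's theorem~\eqref{Greene}, $\sigma$ has an increasing subsequence of length $\ell$ and a decreasing subsequence of length $\bar\ell$. Any such pair meets in exactly one point, since $\ell+\bar\ell=n+1$ and an increasing and a decreasing subsequence share at most one term.

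The first step is to show that this crossroads point is independent of the choice of subsequences, so that $\mathrm{X}(\sigma)$ consists of a single triple $(\ell,i_0,j_0)$. Suppose two crossroads $c=(i_0,j_0)$ and $c'=(i_1,j_1)$ arose from pairs $(I,D)$ and $(I',D')$. Then $I$ and $D'$ also meet in a point, and so do $I'$ and $D$. Hence every point of $[n]$-positions lies on $I\cup D$, because $|I\cup D|=n$; likewise for $I'\cup D'$. So $\sigma$ is exactly a union of one increasing and one decreasing sequence meeting at one point, in two ways.

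A short exchange argument should give $c=c'$. If, say, $i_1>i_0$, one can splice the part of $I$ before $c$ with the part of $I'$ after $c'$, or make a similar combination, to produce an increasing subsequence of length $>\ell$, contradicting Greene. We expect this uniqueness to be the main obstacle. It is what makes ``southeast of the crossroads'' well defined, and it needs care with how the points of $\sigma$ distribute among the four legs.

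Granting uniqueness, every point of $\sigma$ other than $c$ lies on $I\cup D$ minus $c$. Such a point is either southeast or northwest of $c$ (on $I$), or northeast or southwest of $c$ (on $D$). So $\fS(\ell,i,j)$ is the disjoint union of $\fS_{\mathrm{X}}(\ell,i,j)$ and the four sets in which $(i,j)$ lies strictly SE, NW, NE, or SW of the crossroads. The SE piece is $\fS_{\mathrm{Y}}(\ell,i,j)$ by definition~\eqref{Y}. For the remaining three pieces we apply $\gR$, $\gH$, and $\gV$ respectively, using~\eqref{S ij bijections hook notation}.

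We need to check that each involution maps crossroads to crossroads and carries the relevant quadrant to the SE quadrant. The map $\gR$ (180° rotation) preserves increasing and decreasing subsequences, maps $\fS(\ell,i,j)$ to $\fS(\ell,\bar\imath,\bar\jmath)$, and sends NW to SE. The map $\gH$ (horizontal reflection, $j\mapsto\bar\jmath$) swaps increasing and decreasing subsequences and $\ell\leftrightarrow\bar\ell$, so the crossroads goes to the crossroads. It sends NE (smaller $i$, larger $j$) to... we check quadrants directly: it sends the SW point (larger $i$, smaller $j$) to one with larger $i$ and larger $\bar\jmath$, i.e.\ SE. So the SW piece bijects onto $\fS_{\mathrm{Y}}(\bar\ell,i,\bar\jmath)$. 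Symmetrically, $\gV$ sends the NE piece onto $\fS_{\mathrm{Y}}(\bar\ell,\bar\imath,j)$.

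Summing the cardinalities of the five pieces gives $|\fS_{\mathrm X}(\ell,i,j)|+\sum_{g\in I}|\fS_{\mathrm Y}(g\cdot(\ell,i,j))|$, which is the claimed identity.
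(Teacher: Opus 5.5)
There is a genuine gap: the uniqueness of the crossroads, on which your whole decomposition rests, is false. Take $n=3$, $\ell=\bar{\ell}=2$ and $\sigma=(1,3,2)$, which has shape $(2,1)$. The pairs $(1,1),(2,3)$ and $(1,1),(3,2)$ are both increasing subsequences of length $2$, while $(2,3),(3,2)$ is decreasing of length $2$. Hence $\mathrm{X}(\sigma)=\{(2,2,3),(2,3,2)\}$ has two elements. No splicing argument can prove your first step, so everything after ``Granting uniqueness, \ldots'' is unsupported. This is why definition~\eqref{Y} quantifies over \emph{all} $(\ell,i',j')\in\mathrm{X}(\sigma)$. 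It is also why the paper works with the set-level equivariance $g\cdot\mathrm{X}(\sigma)=\mathrm{X}(g\sigma)$ rather than with a single crossroads.

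What the five-way partition actually needs is a weaker, true statement: a non-crossroads point $p$ lies in the \emph{same} strict quadrant relative to every crossroads point. Write $\mathrm{NW}(c)$, etc., for the points of $\operatorname{supp}M_\sigma$ strictly northwest of $c$.

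First, $c$ is a crossroads iff $\mathrm{NW}(c)$ and $\mathrm{SE}(c)$ are increasing chains and $\mathrm{NE}(c)$ and $\mathrm{SW}(c)$ are decreasing chains. One direction holds because any pair $(I,D)$ through $c$ covers all $n$ points. Conversely, the two chains through $c$ then have total length $n+1=\ell+\bar{\ell}$, so both bounds are attained.

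Now let $c$ be a crossroads, let $p\in\mathrm{SE}(c)$, and let $c'$ be another crossroads.
\begin{itemize}
\item If $p\in\mathrm{NE}(c')$, then $c$ is NW or NE of $c'$. Accordingly, either $\{c',p\}\subseteq\mathrm{SE}(c)$ or $\{c,p\}\subseteq\mathrm{NE}(c')$, and in each case this is a pair of the wrong monotonicity. The case $p\in\mathrm{SW}(c')$ is excluded the same way.
\item If $p\in\mathrm{NW}(c')$, then $\mathrm{NW}(p)\subseteq\mathrm{NW}(c')$ and $\mathrm{SE}(p)\subseteq\mathrm{SE}(c)$. Because $\mathrm{SE}(c)$ is increasing, also $\mathrm{NE}(p)\subseteq\mathrm{NE}(c)$ and $\mathrm{SW}(p)\subseteq\mathrm{SW}(c)$. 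So $p$ would itself be a crossroads, a contradiction.
\end{itemize}
Applying $\gR,\gH,\gV$ handles the other three quadrants.

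With this lemma in place of uniqueness, the rest of your argument is exactly the paper's proof. That includes disjointness and transporting the NW, SW and NE pieces to $\fS_{\mathrm{Y}}$ via $\gR,\gH,\gV$.
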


\begin{proof} 
First observe from~\eqref{G action} and~\eqref{X} that for all $g \in I$,
\begin{equation}
  \label{X equivariant}
  g \cdot \mathrm{X}(\sigma) = \mathrm{X}(g \sigma),
\end{equation}
where the action on the left-hand side is the induced action in~\eqref{G action on lij}.
Next, observe from~\eqref{Y} and~\eqref{SY} and~\eqref{X equivariant} that for each $g \in I$, the set $g \: \fS_{\rm{Y}}(g \cdot (\ell,i,j))$ takes the form
\begin{align*}
  e \: \fS_{\rm{Y}}\big(e \cdot(\ell,i,j) \big) &= \Big\{ \sigma \in \fS( \ell, i, j) : i > i' \text{ and } j > j' \text{ for all } (\ell, i', j') \in \mathrm{X}(\sigma) \Big\},\\
  \gH \: \fS_{\mathrm{Y}} \big( \gH \cdot (\ell, i, j) \big) &= \Big\{ \sigma \in \fS( \ell, i, j) : i > i' \text{ and } j < j' \text{ for all } (\ell, i', j') \in \mathrm{X}(\sigma) \Big\}, \\ 
  \gV \: \fS_{\mathrm{Y}} \big( \gV \cdot (\ell, i, j) \big) &= \Big\{ \sigma \in \fS( \ell, i, j) : i < i' \text{ and } j > j' \text{ for all } (\ell, i', j') \in \mathrm{X}(\sigma) \Big\}, \\ 
  \gR \: \fS_{\mathrm{Y}} \big( \gR \cdot (\ell, i, j) \big) &= \Big\{ \sigma \in \fS( \ell, i, j) : i < i' \text{ and } j < j' \text{ for all } (\ell, i', j') \in \mathrm{X}(\sigma) \Big\}.
\end{align*}
In words, each set above contains those $\sigma$'s for which $(i,j)$ lies (respectively) southeast, southwest, northeast, and northwest of the ``crossroads'' of the support of $M_\sigma$.
Recalling from~\eqref{SX} that $\fS_{\mathrm{X}}(\ell,i,j)$ contains those $\sigma$'s for which $(i,j)$ lies \emph{in} the crossroads, it follows that every $\sigma \in \fS(\ell,i,j)$ lies in exactly one of these five sets:
  \begin{equation}
    \label{S lij disjoint union}
    \fS(\ell, i, j) = \fS_{\mathrm{X}}(\ell, i, j) \sqcup \coprod_{g \in I} g \: \fS_{\mathrm{Y}} \big( g \cdot (\ell, i, j) \big).
  \end{equation}
  Since by~\eqref{G action on lij} translation by $g$ is a bijection, we have 
  \[
    \left| g \: \fS_{\mathrm{Y}}\big( g \cdot (\ell,i,j) \big) \right| = \left| \fS_{\mathrm{Y}}\big(g \cdot (\ell,i,j) \big) \right|,
  \]
  and so the result follows from~\eqref{S lij disjoint union}.
\end{proof}

\begin{lemma}
  \label{lemma:size SX SY}
  Let $1 \leq \ell \leq n$.
  For all $1 \leq i,j \leq n$,
  \begin{align*}
    X(\ell,i,j) \coloneqq \big| \fS_{\mathrm{X}}(\ell, i, j) \big| &= \textstyle 
    \tallmset{K}{i - K} \tallmset{K}{j - K} \tallmset{\ell-K+1}{n - \ell + K - i} \tallmset{\ell - K + 1}{n - \ell + K - j},
    \\[1ex]
    Y(\ell,i,j) \coloneqq \big| \fS_{\mathrm{Y}}(\ell, i, j) \big| &= \sum_{1 < k < K} \textstyle 
    \tallmset{k-1}{i-k} \tallmset{k-1}{j-k} \tallmset{\ell - k}{n - \ell + k - i} \tallmset{\ell - k}{n - \ell + k - j},
  \end{align*}
  where
  \[
    K \coloneqq K(\ell,i,j) \coloneqq \frac{i + j + \ell - n}{2},
  \]
  and where $\shortmset{a}{b}$ is the multiset number defined in~\eqref{multiset definition}.
\end{lemma}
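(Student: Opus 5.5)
The plan is to translate membership in $\fS_{\mathrm{X}}(\ell,i,j)$ and $\fS_{\mathrm{Y}}(\ell,i,j)$ into conditions on the pair $(\se{P},\se{Q}) = \RS(\sigma)$, and then count tableau pairs by splitting each tableau at $i$ (resp.\ $j$). Throughout we use the fact that a hook tableau is determined by the set of entries in its first column below the $1$ (Lemma~\ref{lemma:f special}).

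\textbf{Step 1: characterize X and Y in tableau terms.} Apply Viennot's construction (Algorithm~\ref{alg:Viennot}) to a hook shape. For $r=1$ there are exactly $\ell$ shadow lines $L^{(1)}_1,\dots,L^{(1)}_\ell$. The maximal decreasing subsequences of length $\bar\ell$ are read off from the skeleton points, and each meets every maximal increasing subsequence exactly once.
\begin{itemize}
\item I expect the following for X. The point $(i,j)$ lies in the crossroads exactly when it is the first point of some line $L^{(1)}_k$, so that $\se{P}_{1,k}=i$ and $\se{Q}_{1,k}=j$. In addition, the balance condition should force $k=K$: the number of entries below the corner lying in $\se{P}|_{<i}$, plus the number lying in $\se{Q}|_{<j}$, should be tied to $n-\ell$. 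Writing $i-K$ and $j-K$ for the column entries of $\se{P}|_{\le i}$ and $\se{Q}|_{\le j}$, the total number of column entries gives $(i-K)+(j-K)+\ldots$, and this leads to $2K = i+j+\ell-n$.
\item For Y, I expect $(i,j)$ to lie southeast of the crossroads exactly when $i$ and $j$ occupy the same first-row column $k$, with $1<k<K$ and the corresponding parity/offset condition shifted by one.
\end{itemize}
I would verify both characterizations using Greene's theorem~\eqref{Greene} together with the geometry of the shadow lines. The symmetry~\eqref{X equivariant} serves as a consistency check.

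\textbf{Step 2: bijection to quadruples $(\se{A},\se{B},\se{C},\se{D})$.} Take $\se{A}=\se{P}|_{\le i}$, which is a hook with first row of length $k$ ending in $i$; deleting $i$ is unnecessary, because the count $\shortmset{k}{i-k}$ or $\shortmset{k-1}{i-k}$ is exactly the number of choices of the column entries. Take $\se{B}=\se{Q}|_{\le j}$ in the same way. Relabel $\se{P}|_{>i}$ as a skew hook. The remaining first-row entries, $\ell-k$ of them, and the remaining column entries, $n-\ell+k-i$ of them, are chosen freely among $\{i+1,\dots,n\}$. This gives $\binom{n-i}{n-\ell+k-i}$ choices, which I would match with $\shortmset{\ell-k+1}{n-\ell+k-i}$ or $\shortmset{\ell-k}{n-\ell+k-i}$ according to whether the corner cell is shared. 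The point is that the pieces below and above $i$ are independent once $k$ is fixed, so the count is a product of four factors. Summing over the admissible $k$ gives $Y$. For $X$, the single value $k=K$ gives the product. When $K$ is a half-integer the multiset numbers vanish by convention~\eqref{binomial definition}, consistent with $\fS_{\mathrm{X}}(\ell,i,j)=\varnothing$ in that case.

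\textbf{Main obstacle.} The hard part is Step~1. One has to show that the crossroads condition is equivalent to the clean statement ``$i,j$ head the same shadow line $L^{(1)}_K$'' (and similarly for Y), and that no other constraints linking $\se{P}$ and $\se{Q}$ arise. I would first test this on small cases such as the $n=7$ example. I would then prove it by tracking how the single second-row skeleton, which for a hook is a decreasing chain, threads through the corners of the lines $L^{(1)}_k$.
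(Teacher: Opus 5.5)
Your overall strategy is the paper's: pass to $\RS(\sigma)=(\se{P},\se{Q})$, cut at $i$ and $j$, and count four hook tableaux. However, the crossroads characterization in Step~1 is false, and the $X$ count in Step~2 does not follow from it.

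\textbf{The crossroads characterization.} A crossroads point need not be alone on its shadow line, and $j$ need not lie in the first row of $\se{Q}$. Take $\sigma=132$ with $n=3$, $\ell=2$. The point $(2,3)$ lies on the increasing subsequence $13$ and the decreasing subsequence $32$, so it is a crossroads point with $K=2$. Yet $L^{(1)}_2=\{(2,3),(3,2)\}$, and $\se{P}=\se{Q}$ has first row $1,2$ and first column $1,3$, so $\se{Q}_{1,2}=2\neq 3$. Similarly, when $K=1<i$ your condition would demand $\se{P}_{1,1}=i$, which is impossible. Yet $X(2,3,1)=3$ when $n=4$.

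Your own factor exposes the mismatch. If $i$ ended the first row of $\se{P}|_{\le i}$, the column entries would come from $\{2,\dots,i-1\}$, giving $\shortmset{K-1}{i-K}$. The factor $\shortmset{K}{i-K}=\binom{i-1}{i-K}$ instead counts hooks of shape $(K,1^{i-K})$ with $i$ at \emph{either} end.

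The missing idea is the paper's counting argument. Suppose $(i,j)$ is the intersection of an increasing $A$ of length $\ell$ and a decreasing $B$ of length $\bar\ell$. Then $A\cup B=\operatorname{supp}M_\sigma$. So the positions $<i$ off $A$, and the values $>j$ off $A$, both come from exactly the points of $B$ preceding $(i,j)$. Equating $(i-1)-(K-1)=(n-j)-(\ell-K)$ forces $K=(i+j+\ell-n)/2$. Moreover, the longest increasing subsequence within the first $i$ positions has length exactly $K$, so $\se{P}_{1,K}\le i<\se{P}_{1,K+1}$. The correct condition is therefore that $\se{P}|_{\le i}$ and $\se{Q}|_{\le j}$ have first rows of length exactly $K$, and that is what produces $\shortmset{K}{i-K}\shortmset{K}{j-K}$.

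\textbf{The $Y$ case.} Your characterization ($\se{P}_{1,k}=i$, $\se{Q}_{1,k}=j$, $1<k<K$) agrees with the paper's. You still owe the converse: such a pair forces $L^{(1)}_k=\{(i,j)\}$, and $(i,j)$ is then southeast of every crossroads point (by a one-sided version of the count above). The tool you name settles the first part.
\begin{itemize}
\item The first-column entries of $\se{P}$ and $\se{Q}$ below row~1 are the rows and columns of the points of $\sigma^{(2)}$, which form a decreasing chain.
\item When $k<K$ we have $(i-k)+(j-k)>n-\ell$, so some point of $\sigma^{(2)}$ has row $<i$ and column $<j$.
\item A non-singleton $L^{(1)}_k$ would have a corner with row $>i$ and column $>j$, which would lie southeast of that point and break the chain.
\end{itemize}

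\textbf{The upper factor.} Drop the plan to ``match'' your upper count with $\shortmset{\ell-k}{n-\ell+k-i}$ according to whether a corner is shared. Your free count $\binom{n-i}{n-\ell+k-i}=\shortmset{\ell-k+1}{n-\ell+k-i}$ is correct in both cases. It is the printed $Y$ formula that is off by one:
\begin{itemize}
\item The paper's own $\Phi_{\mathrm{Y}}$ acts on $\se{P}|_{\ge i}$, which has $n-i+1$ boxes.
\item The trace computation in Section~\ref{sec:fixed} uses $\binom{n-i}{m-i+k}$ with $m=n-\ell$.
\item Concretely, for $n=4$, $\ell=2$, $\sigma=3241$, one has $(3,4)\in\mathrm{Y}(\sigma)$, so $|\fS_{\mathrm{Y}}(2,3,4)|=1$. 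The printed formula instead contains the factor $\shortmset{0}{1}=0$.
\end{itemize}
Prove the $\ell-k+1$ version rather than forcing agreement with the printed one.
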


\begin{proof} 

We will prove each formula by exhibiting a bijection with a certain subset of $\SYT(\la) \times \SYT(\la)$.
First we prove the $X(\ell,i,j)$ formula.
Let $\sigma \in \fS(\ell,i,j)$, and suppose that $\RS(\sigma) = (\se{P}, \se{Q})$.
By~\eqref{X} and~\eqref{SX}, we have $\sigma \in \fS_{\mathrm{X}}(\ell,i,j)$ if and only if there exist indices $1 \leq a_1 < \cdots < a_\ell \leq n$ and $1 \leq b_1 < \cdots < b_{\bar{\ell}} \leq n$ such that
  \begin{equation}
    \label{a's b's proof lemma}
    \sigma(a_1) < \cdots < \sigma(a_\ell) \quad \text{and} \quad \sigma(b_1) > \cdots > \sigma(b_{\bar{\ell}}),
  \end{equation}
  along with indices $K$ and $R$ such that
  \[
    a_K = b_{R} = i, \text{ and thus } \sigma(a_K) = \sigma(b_{R}) = j.
  \]
  It follows that
  \begin{align}
  \label{two sets}
  \begin{split}
     \{b_1, \ldots, b_{R-1} \} &= \{1, \ldots, i - 1 \} \setminus \{a_1, \ldots, a_{K-1}\}, \\ 
    \{\sigma(b_1), \ldots, \sigma(b_{R-1})\} & = \{j+1, \ldots, n \} \setminus \{\sigma(a_{K+1}), \ldots, \sigma(a_\ell)\}.
  \end{split}
  \end{align}
  Since the left-hand sides in~\eqref{two sets} have the same cardinality $R-1$, the right-hand sets must as well, yielding
  \begin{equation}
    \label{K L equation}
    R - 1 = 
    (i - 1) - (K-1) = (n - j) - (\ell - K).
  \end{equation}
  Solving for $K$, we obtain
  $K = (i + j + \ell - n)/2$,
  which coincides with the $K$ defined in the statement of the lemma.
  Hence
  \begin{equation}
    \label{sigma K fact}
    \sigma \in \fS_{\mathrm{X}}(\ell,i,j) \text{ if and only if $\sigma(i) = j$ is the $K$th term in an increasing subsequence of length $\ell$}.    
  \end{equation}
  Hence if $K \notin [ \ell ]$, then automatically $\fS_{\mathrm{X}}(\ell,i,j) = \varnothing$;
  in this case, the $X(\ell,i,j)$ formula in the lemma returns~0 by definition~\eqref{multiset definition}, so we are done.
  We therefore may assume that $K \in [\ell]$.
  By~\eqref{sigma K fact}, we have $\sigma \in \fS_{\mathrm{X}}(\ell,i,j)$ if and only if $(i,j) \in L_K^{(1)}$ in Algorithm~\ref{alg:Viennot}.
  Thus we must have $\se{P}_{1,K-1} < \se{P}_{1,K} \leq i < \se{P}_{1,K+1}$, assuming the entries in question exist.
  Due to the hook shape of $\se{P}$, it follows that the entry $i$ occupies either box $K$ in the first row, or box $i-K+1$ in the first column.
  By the same argument, in $\se{Q}$ the entry $j$ occupies either box $K$ in the first row, or box $j-K+1$ in the first column.
  Hence
  \[
    \RS\left( \fS_{\mathrm{X}}(\ell,i,j) \right) = \left\{ (\se{P}, \se{Q}) \in \SYT(\la) \times \SYT(\la) : 
    \begin{aligned}
    i &= \, \se{P}_{1,K} \text{ or } \se{P}_{i-K+1, 1} \, , \\[-.5ex] 
    j &= \se{Q}_{1,K} \text{ or } \se{Q}_{j-K+1,1}
    \end{aligned} 
    \right\},
  \]
  and, recalling the notation $\se{P}|_{\leq i}$ from~\eqref{T| def}, we have a bijection
  \begin{align}
      \Phi_{\mathrm{X}} : \RS\left( \fS_{\mathrm{X}}(\ell,i,j) \right) &\longrightarrow 
    \left( 
      \begin{array}{r}
      \SYT(K, 1^{i-K}) \\ 
      {} \times \SYT(K, 1^{j-K})
      \end{array}
     \right) 
     \times 
     \left(
      \begin{array}{r}
       \SYT(\ell - K + 1, \: 1^{n - \ell + K - i}) \\  
       {} \times \SYT(\ell - K + 1, \: 1^{n - \ell + K - j})
       \end{array} 
       \right), \label{SX bijection} \\[1ex] 
    ( \se{P}, \se{Q} ) & \longmapsto \big( ( \se{P}|_{\leq i}, \se{Q}|_{\leq j} ), (\se{R}, \se{S}) \big) \nonumber
  \end{align}
  where $\se{R}$ (resp., $\se{S}$) is formed from $\se{P}|_{>i}$ (resp., $\se{Q}|_{>j}$) by decreasing all entries by $i-1$ (resp., $j-1$), and then sliding the two pieces together to meet at a new upper-left box with entry 1.
  This process is clearly invertible, so $\Phi_{\mathrm{X}}^{-1}$ is well defined.
  Since $\RS$ is injective, the $X(\ell,i,j)$ formula now follows by using Lemma~\ref{lemma:f special} to take the cardinality of the right-hand side of~\eqref{SX bijection}.

  Now we prove the $Y(\ell,i,j)$ formula.
  By~\eqref{SY}, we have $\sigma \in \fS_{\mathrm{Y}}(\ell,i,j)$ if and only if there exist (not necessarily unique) sequences~\eqref{a's b's proof lemma} along with indices $k$ and $h$ such that
  \[
    a_k = i, \;\; \sigma(a_k) = j, \qquad b_h < i, \;\; \sigma(b_h) < j, \qquad b_h \notin \{a_1, \ldots, a_\ell\}.
  \]
  It follows that
  \[
    \{ b_m : \sigma(b_m) > j \} \subsetneq \Big( \{ b_m : b_m < i \} \setminus \{a_1, \ldots, a_\ell \} \Big),
  \]
since the $\sigma(b_m)$'s are decreasing, and since $b_h$ is contained in the right-hand set but not the left-hand set.
Rewriting these two sets, we have
\[
  \sigma^{-1}\Big(\{j+1, \ldots, n\} \setminus \{\sigma(a_{k+1}), \ldots, \sigma(a_\ell) \} \Big) \subsetneq \Big( \{1, \ldots, i-1 \} \setminus \{a_1, \ldots, a_{k-1 \}} \Big),
\]
and taking cardinalities we obtain
\[
  (n-j) - (\ell - k ) < (i-1) - (k-1),
\]
and solving for $k$ we have
\[
  k < (i+j+\ell-n)/2 = K.
\]
Thus $\sigma \in \fS_{\mathrm{Y}}(\ell,i,j)$ if and only if $\sigma(i) = j$ is the $k$th term in an increasing subsequence of length $\ell$, for some $k < K$.
Equivalently, $\sigma \in \fS_{\mathrm{Y}}(\ell,i,j)$ if and only if $L_k^{(1)} = \{(i,j)\}$ for some $k < K$ in Algorithm~\ref{alg:Viennot};
note that $L_k^{(1)}$ must be a singleton due to the fact~\eqref{SY} that $(i,j)$ is southeast of every point in $\mathrm{X}(\sigma)$.
Consequently we have $\se{P}_{1,k} = i$ and $\se{Q}_{1,k} = j$.
Note that $k=1$ would force $i=j=1$ and thus $k < K = (1+1+\ell-n)/2 = 1 + (\ell-n)/2 \leq 1$, which is a contradiction; therefore we may take $1 < k < K$.
Hence
  \[
    \RS\left( \fS_{\mathrm{Y}}(\ell,i,j) \right) = \coprod_{1 < k < K} \Big\{ (\se{P}, \se{Q}) \in \SYT(\la) \times \SYT(\la) : 
    i = \se{P}_{1,k} \text{ and } j = \se{Q}_{1,k}
    \Big\}
  \]
  and we have a bijection
  \begin{align}
      \Phi_{\mathrm{Y}} : \RS\left( \fS_{\mathrm{Y}}(\ell,i,j) \right) &\longrightarrow 
    \coprod_{1 < k < K} \left( 
      \begin{array}{r}
      \SYT(k-1, 1^{i-k}) \\ 
      {} \times \SYT(k-1, 1^{j-k})
      \end{array}
     \right) 
     \times 
     \left(
      \begin{array}{r}
       \SYT(\ell - k, \: 1^{n - \ell + k - i}) \\  
       {} \times \SYT(\ell - k, \: 1^{n - \ell + k - j})
       \end{array} 
       \right), \label{SY bijection} \\[1ex] 
    ( \se{P}, \se{Q} ) & \longmapsto \big( (\se{P}|_{<i}, \se{Q}|_{<j}), (\se{R}, \se{S}) \big) \nonumber
  \end{align}
  where $\se{R}$ (resp., $\se{S}$) is formed from $\se{P}|_{\geq i}$ (resp., $\se{Q}|_{\geq j}$) by decreasing all entries by $i-1$ (resp., $j-1$), and then sliding the two pieces together so that the upper-left corner is the box now labeled 1, that is, the box originally labeled $i$ (resp., $j$).
  The rest of the proof is the same as the $X(\ell,i,j)$ case above.
\end{proof}

\begin{theorem}
  \label{thm:hooks}
  Let $\la = (\ell, 1^{n-\ell})$ be a hook shape, and recall $P^\la_{ij}$ from~\eqref{P lambda}. 
  For all $1 \leq i,j \leq n$,
  \[
   P^\la_{ij} = \frac{X(\ell,i,j) + Y(\ell, i, j) +  Y(\bar{\ell}, i, \bar{\jmath}) + Y(\bar{\ell}, \bar{\imath}, j) + Y(\ell, \bar{\imath}, \bar{\jmath})}{\tallmset{\ell}{n-\ell}^2},
  \]
  where the functions $X$ and $Y$ are those defined in Lemma~\ref{lemma:size SX SY}, where $\bar{\imath} \coloneqq n + 1 - i$, and where $\shortmset{a}{b}$ is the multiset number defined in~\eqref{multiset definition}.
\end{theorem}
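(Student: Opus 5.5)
The theorem follows by assembling Lemmas~\ref{lemma:size Slij hooks} and~\ref{lemma:size SX SY}; there is no new combinatorics to do. By~\eqref{P lambda}, \eqref{size S n lambda} and~\eqref{Slij definition}, we have
\[
P^\la_{ij} = \frac{|\fS(\ell,i,j)|}{(f^\la)^2}.
\]
Lemma~\ref{lemma:f special} gives $f^{(\ell,1^{n-\ell})} = \shortmset{\ell}{n-\ell}$, which accounts for the denominator $\tallmset{\ell}{n-\ell}^2$.

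For the numerator, I will apply Lemma~\ref{lemma:size Slij hooks}. It writes $|\fS(\ell,i,j)|$ as $|\fS_{\mathrm X}(\ell,i,j)|$ plus the sum of $|\fS_{\mathrm Y}(g\cdot(\ell,i,j))|$ over the four elements $g \in I = \{e,\gH,\gV,\gR\}$. The induced action~\eqref{G action on lij} is read off from~\eqref{S ij bijections hook notation}:
\begin{align*}
e\cdot(\ell,i,j) &= (\ell,i,j), & \gH\cdot(\ell,i,j) &= (\bar\ell,i,\bar\jmath),\\
\gV\cdot(\ell,i,j) &= (\bar\ell,\bar\imath,j), & \gR\cdot(\ell,i,j) &= (\ell,\bar\imath,\bar\jmath).
\end{align*}
Here I use that conjugation sends $(\ell,1^{n-\ell})$ to $(\bar\ell,1^{n-\bar\ell})$ with $\bar\ell = n+1-\ell$. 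Lemma~\ref{lemma:size SX SY} then identifies each cardinality with the functions $X$ and $Y$, evaluated at the corresponding triple. This reproduces exactly the five terms in the numerator.

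The only point needing care is that Lemma~\ref{lemma:size SX SY} must be applicable at the transformed triples. Its hypotheses are only $1\le \ell,i,j\le n$, and $\bar\ell,\bar\imath,\bar\jmath$ also lie in $[n]$, so this holds.

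I should also check that the $K$ appearing in $Y(\bar\ell,i,\bar\jmath)$ is the correct one. By definition it is $K(\bar\ell,i,\bar\jmath)$, and Lemma~\ref{lemma:size SX SY} is stated for arbitrary triples, so no extra work is needed. The degenerate cases, where $K$ is a half-integer or lies outside $[\ell]$, are already handled inside that lemma by the convention that such multiset numbers vanish.

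I expect no real obstacle here. The only bookkeeping risk is matching $\gH$ and $\gV$ to the correct barred coordinates, and~\eqref{S ij bijections hook notation} settles that directly.
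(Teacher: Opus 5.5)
Your proposal is correct and is essentially the paper's proof. Like the paper, you combine Lemma~\ref{lemma:size Slij hooks} with Lemma~\ref{lemma:size SX SY}, read off the induced action of $I$ on triples from~\eqref{S ij bijections hook notation}, and divide by $(f^\la)^2$ using~\eqref{size S n lambda} and Lemma~\ref{lemma:f special}.
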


\begin{proof}

We have
\begin{align*}
  \check{P}^\la_{ij} & = \left| \fS(\ell, i, j) \right| & \text{by~\eqref{P lambda tilde ij} and~\eqref{Slij definition}} \\ 
&= \left| \fS_{\mathrm{X}}(\ell,i,j) \right| + \sum_{g \in I} \left| \fS_{\mathrm{Y}} \big( g \cdot (\ell,i,j) \big) \right| & \text{by Lemma~\ref{lemma:size Slij hooks}} \\ 
&= X(\ell,i,j) + \sum_{g \in I} Y \big( g \cdot (\ell,i,j) \big) & \text{by Lemma~\ref{lemma:size SX SY}},
\end{align*}
and thus by~\eqref{S ij bijections hook notation}--\eqref{G action on lij}, the numerator in the theorem equals $\check{P}^\la_{ij}$.
The result now follows from~\eqref{size S n lambda} and Lemma~\ref{lemma:f special}.
\end{proof}

\begin{rem}[Diagrammatic interpretation]
\label{rem:visualize hooks}
  The following construction gives a simple way to visualize the contribution to $\operatorname{supp} P^\la$ coming from each of the five terms in Theorem~\ref{thm:hooks}.
  Throughout this remark, when we write ``$\la$,'' we refer to the result of rotating the Young diagram of $\la$ clockwise by 45 degrees, then superimposing it to fit snugly inside a generic $n \times n$ matrix.
  (Note that $\la$ is depicted in red in the equation at the end of this remark.)
  In particular, the upper-left box of $\la$ occupies matrix position $(1, \bar{\ell})$, the upper-right box occupies position $(\ell, n)$, and the lower-left box occupies position $(\bar{\ell}, 1)$.
  By \emph{column $k$ of $\la$}, we mean its $k$th column from the left;
  to account for cases where $k$ is not between $1$ and $\ell$, we imagine the columns continuing in both directions past the edges of $\la$.
  Similarly, by \emph{row $k$ of $\la$}, we mean its $k$th row from the top (imagining the same conventions as described above when $k$ is not between $1$ and $\bar{\ell}$).
  In the proof of Lemma~\ref{lemma:size SX SY}, by solving the equation~\eqref{K L equation} for $K \coloneqq K(\ell,i,j)$ and $R \coloneqq R(\ell,i,j)$ we obtained
    \begin{align*}
      K(\ell,i,j) & = (i + j - \bar{\ell} +1) / 2, \\ 
      R(\ell,i,j) &= (i + \bar{\jmath} - \ell + 1)/2 = K \big( \gH \cdot (\ell,i,j) \big) = i - K(\ell,i,j) + 1.
    \end{align*}
  The position $(i,j)$ occupies column $K(\ell, i, j)$ and row $R(\ell,i,j)$ of $\la$.
  To collect all the $(i,j)$'s together, define five $n \times n$ matrices $P^\la_{\mathrm{X}}$ and $P^\la_g$ via
    \begin{align*}
    \label{five matrices}
      (P^\la_{\mathrm{X}})_{ij} & = 
      X(\ell,i,j) \big/ |\fS^\la|,\\
      (P^\la_g)_{ij} & = 
      Y\big( g \cdot (\ell,i,j) \big) \big/ |\fS^\la | \text{ for all $g \in I$}.
    \end{align*}
  Then Theorem~\ref{thm:hooks} can be rewritten as the matrix equation
    \begin{equation}
      \label{matrix equation hooks}
      P^\la = P^\la_{\mathrm{X}} + \sum_{g \in I} P^\la_g.
    \end{equation}
  The matrix $P^\la_{\mathrm{X}}$ is bisymmetric; $P^\la_{e}$ and $P^\la_{\gR}$ are symmetric only; and $P^\la_{\gH}$ and $P^\la_{\gV}$ are persymmetric only.
  Moreover, upon inspecting where the multiset numbers vanish in Theorem~\ref{thm:hooks}, one can determine the support of each matrix above:
    \begin{itemize}
      \item The support of $P^\la_{\mathrm{X}}$ is its central ``checkerboard'' region
      \[
        \operatorname{supp} P^\la_{\mathrm{X}} = \left\{ (i,j) : \begin{array}{l} 
        \text{$K(\ell,i,j)$ and $R(\ell,i,j)$ are integers, and} \\[.5ex] 
        1 \leq K(\ell,i,j) \leq \ell \text{ and } 1 \leq R(\ell,i,j) \leq \bar{\ell}
        \end{array}
        \right\}.
      \]
            
      \item The support of $P^\la_e$ is its southeastern region
      \[
        \operatorname{supp} P^\la_e = \Big\{ (i,j) : K(\ell,i,j) > 2 \text{ and } 1 < R(\ell,i,j) < \bar{\ell}  \Big\}.
      \]
      
      \item The support of $P^\la_{\gH}$ is its southwestern region
      \[
        \operatorname{supp} P^\la_{\gH} = \Big\{ (i,j) : 1 < K(\ell,i,j) < \ell \text{ and } R(\ell,i,j) > 2 \Big\}.
      \]
      
      \item The matrix $P^\la_{\gV}$ is the 180-degree rotation of $P^\la_{\gH}$, so its support is in its northeastern region.
      
      \item The matrix $P^\la_{\gR}$ is the 180-degree rotation of $P^\la_e$, so its support is in its northwestern region.
      
    \end{itemize}
Below is an example of~\eqref{matrix equation hooks} where $\la = (9, 1^6)$, that is, where $n = 15$ and $\ell = 9$:
\[
  \ytableausetup{boxsize=.47em}
  \begin{tikzpicture}[baseline=(current bounding box.center)]
    \node at (0,0) {\includegraphics[width=2cm]{Heatmaps15/a09.pdf}};
    \node [below] at (0,-1) {$P^\la$};
  \end{tikzpicture}
  =
  \begin{tikzpicture}[baseline=(current bounding box.center)]
    \node at (0,0) {\includegraphics[width=2cm]{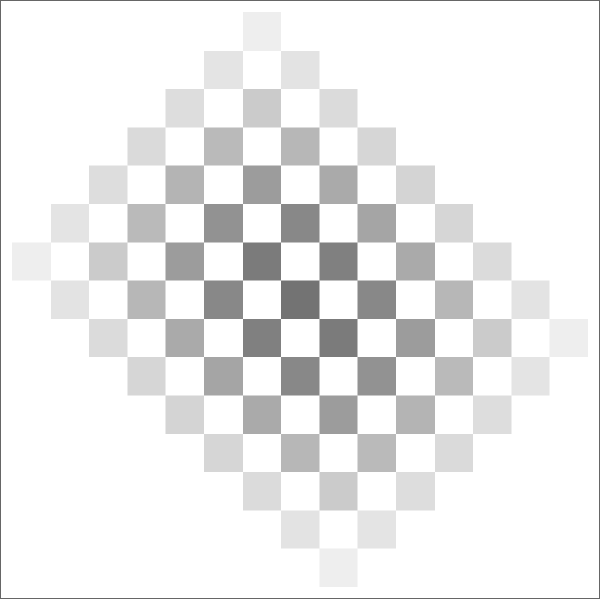}};
    \node [below] at (0,-1) {$P^\la_{\mathrm{X}}$};
    \node [rotate=-45, red] at (0,0) {\ydiagram{9,1,1,1,1,1,1}};
  \end{tikzpicture}
  +
  \begin{tikzpicture}[baseline=(current bounding box.center)]
    \node at (0,0) {\includegraphics[width=2cm]{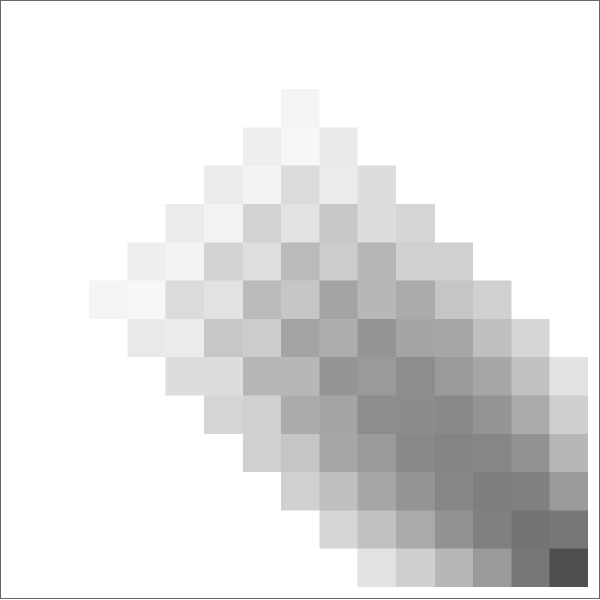}};
    \node [below] at (0,-1) {$P^\la_{e}$};
    \node [rotate=-45, red] at (0,0) {\ydiagram{9,1,1,1,1,1,1}};
  \end{tikzpicture}
  +
  \begin{tikzpicture}[baseline=(current bounding box.center)]
    \node at (0,0) {\includegraphics[width=2cm]{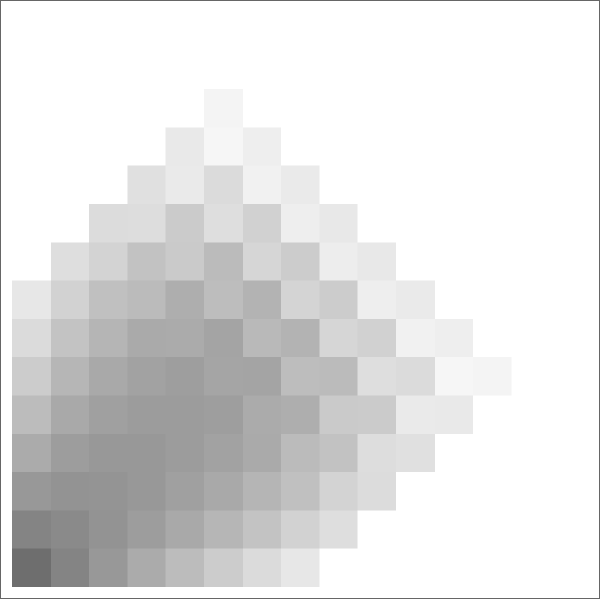}};
    \node [below] at (0,-1) {$P^\la_{\gH}$};
    \node [rotate=-45, red] at (0,0) {\ydiagram{9,1,1,1,1,1,1}};
  \end{tikzpicture}
  +
  \begin{tikzpicture}[baseline=(current bounding box.center)]
    \node at (0,0) {\includegraphics[width=2cm]{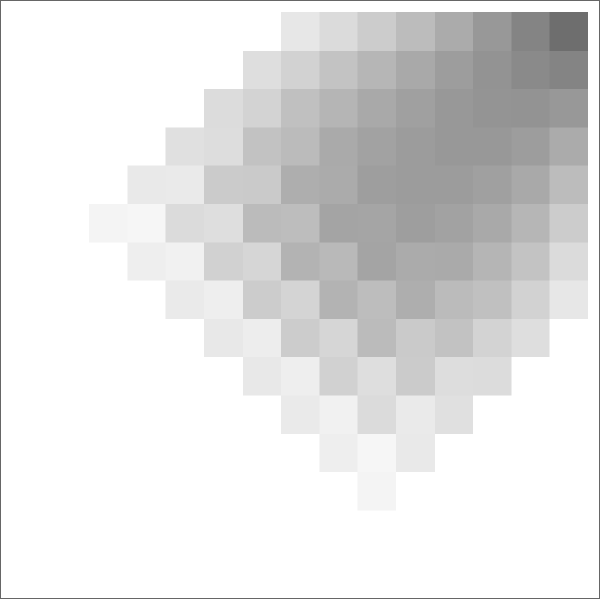}};
    \node [below] at (0,-1) {$P^\la_{\gV}$};
    \node [rotate=-45, red] at (0,0) {\ydiagram{9,1,1,1,1,1,1}};
  \end{tikzpicture}
  +
  \begin{tikzpicture}[baseline=(current bounding box.center)]
    \node at (0,0) {\includegraphics[width=2cm]{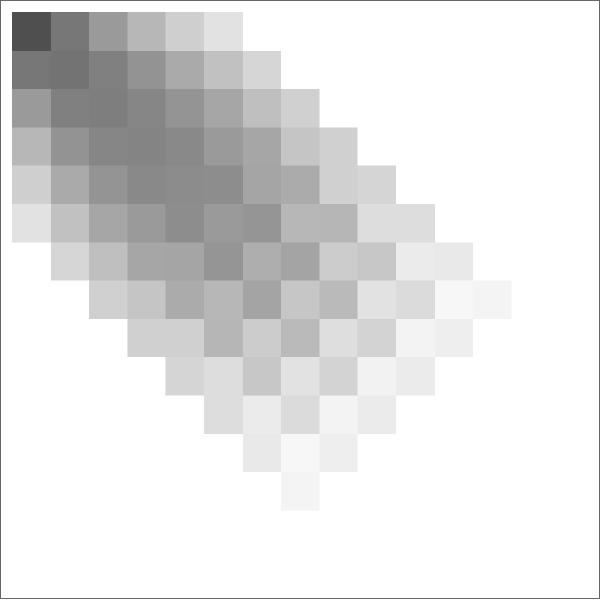}};
    \node [below] at (0,-1) {$P^\la_{\gR}$};
    \node [rotate=-45, red] at (0,0) {\ydiagram{9,1,1,1,1,1,1}};
  \end{tikzpicture}
\]
From the observations above, it follows that if $\la$ is a hook shape, then the support of $P^\la$ is all of $[n] \times [n]$, except in the edge case $\la = (2,1)$ where $P^\la_{2,2} = 0$.
Thus (with the exception of that edge case), prescribing a hook shape $\la$ never prevents $\sigma$ from sending any $i$ to any $j$.
This is in stark contrast with the two-row and rectangular cases, where prescribing $\la$ forces the bandwidth of $\sigma$ to equal $\ell$ or $n - \ell$, respectively; see Remarks~\ref{rem:visualize two-row} and~\ref{rem:rectangles}.
\end{rem}

% \begin{rem}[Hypergeometric series]
%   In Theorem~\ref{thm:hooks}, 
%   % if one wishes to eliminate the summands contributing zero to $Y(\ell,i,j)$, then one can replace the range $1 < k < K$ by $1 < k \leq \min\{i,j,\ell, K - 1/2 \}$; this is clear from inspecting the binomial coefficients in the definition of $Y(\ell,i,j)$.
%   % On the other hand, 
%   when $K > \ell$ (that is, when $i+j > n + \ell$), one can replace the range $1 < k < K$ by $1 < k \leq \ell$, in which case $Y(\ell,i,j)$ can be expressed in terms of a  ${}_4 {F}_3$ hypergeometric series:
%   \begin{equation}
%     \label{4F3 equation}
%     \text{If $i+j > n + \ell$, then } Y(\ell,i,j) = \lim_{(x,y) \rightarrow (i,j)} \textstyle \binom{n-x}{\ell-2} \binom{n-y}{\ell-2} \cdot {}_4 {F}_3 \!\! \left[ 
%       \begin{array}{c} 
%       2 - x, \; 2 - y, \; 2 - \ell, \; 2 - \ell \\[.5ex] 
%       1, \; 2 - x + \bar{\ell} , \; 2 - y + \bar{\ell}   
%       \end{array}
%       ; \; 1 \;
%      \right],
%   \end{equation}
%   where the binomial coefficients carry their fully general definition.
%   \textcolor{red}{Explain; cite Concrete Math.}
%   The limit is required because the hypergeometric series is undefined at $(x,y) = (i,j)$ for sufficiently large values $i$ and~$j$.
%   \textcolor{red}{This nice hypergeo range is precisely outside the $\la$ box in the previous remark.
%   Use $K'$ fact.  KEEP THIS ONLY IF X=0 HERE.}
% \end{rem}

\section{Main result for two-row shapes}
\label{sec:two-row}

Throughout this section, we fix a positive integer $n$, and we assume that $\la = (\ell, n- \ell)$ is a two-row shape (which forces $n/2 \leq \ell \leq n$).
For all $\sigma \in \fS^\la$ we define
\begin{align}
\label{VW}
\begin{split}
  \mathrm{V}(\sigma) & \coloneqq \Big\{ (i,j) \in \operatorname{supp} M_\sigma : 
  \text{$L_k^{(1)} = \{ (i,j) \}$ for some $1 \leq k \leq \ell$}
  \Big\},\\
\mathrm{W}(\sigma) & \coloneqq \Big\{ (i,j) \in \operatorname{supp} M_\sigma : 
  \text{$L_k^{(1)} = \{(i,j), (i',j') \}$ for some $1 \leq k \leq \ell$}
  \Big\},
  \end{split}
\end{align}
with $L_k^{(1)}$ as defined in Algorithm~\ref{alg:Viennot}, where $(i',j') \neq (i,j)$.
(The notation is meant to suggest the fact that, upon rotating the page 135 degrees counterclockwise, the points in $\mathrm{V}(\sigma)$ lie on a V-shaped shadow line, while the points in $\mathrm{W}(\sigma)$ lie on a W-shaped shadow line.)
Throughout this section, we adopt the shorthand
\begin{align}
\label{Slij SV SW}
\begin{split}
  \fS(\ell, i, j) &\coloneqq \fS^{(\ell, n-\ell)}_{ij},\\ 
  \fS_{\mathrm{V}}(\ell, i, j) &\coloneqq \Big\{ \sigma \in \fS( \ell, i, j) : 
  (i,j) \in \mathrm{V}(\sigma)
    \Big\},\\ 
  \fS_{\mathrm{W}}(\ell, i, j) &\coloneqq \Big\{ \sigma \in \fS( \ell, i, j) : 
  (i,j) \in \mathrm{W}(\sigma)
    \Big\}.
  \end{split}
\end{align}

\begin{lemma}
  \label{lemma:Slij count two-row}
  Let $(\ell, n-\ell)$ be a two-row shape, with notation as in~\eqref{Slij SV SW}.
  For all $1 \leq i,j \leq n$,
  \[
    \left| \fS(\ell,i,j) \right| = \left| \fS_{\mathrm{V}}(\ell,i,j) \right| + \left| \fS_{\mathrm{W}}(\ell,i,j) \right|.
  \]
\end{lemma}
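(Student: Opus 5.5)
The plan is to show that $\fS(\ell,i,j)$ is the disjoint union of $\fS_{\mathrm{V}}(\ell,i,j)$ and $\fS_{\mathrm{W}}(\ell,i,j)$. Disjointness is immediate from \eqref{VW}: the first-level shadow lines $L_1^{(1)}, \ldots, L_{\ell}^{(1)}$ partition $\operatorname{supp} M_\sigma$. So the point $(i,j)$ lies in exactly one $L_k^{(1)}$, and that set cannot be both a singleton and a two-element set. The content of the lemma is therefore the covering statement. For every $\sigma \in \fS^{(\ell,n-\ell)}$, each shadow line $L_k^{(1)}$ has one or two points, and there are exactly $\ell$ such lines.

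\textbf{Number of lines.} Step~\ref{fill rows} of Algorithm~\ref{alg:Viennot} fills row $1$ of $\se{P}$ and $\se{Q}$ with one entry per shadow line of $\sigma^{(1)}$. Since the first row of the common shape has length $\ell$, there are exactly $\ell$ lines $L_1^{(1)},\ldots,L_\ell^{(1)}$. Hence the index $k$ in \eqref{VW} automatically ranges over $[\ell]$.

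\textbf{Size of each line.} Consider the points of a single line $L_k^{(1)}$. None lies in the shadow of another, so ordered by increasing row index they have strictly decreasing column index; that is, they form a decreasing subsequence of $\sigma$. A line with $t$ points produces $t-1$ corner ($\lrcorner$) points of the next skeleton $\sigma^{(2)}$. The total number of points of $\sigma^{(2)}$ is $n$ minus the number of first-level lines, which is $n-\ell$.

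Next use the shape constraint. The shape has at most two rows, so $\lambda'_1 \le 2$, and Greene's theorem \eqref{Greene} says the longest decreasing subsequence of $\sigma$ has length at most $2$. Therefore every $L_k^{(1)}$ has at most two points. (Alternatively, a line with three points would force $\sigma^{(2)}$ to contain two corner points on one shadow line of $\sigma^{(2)}$, which would create a third row. Greene's theorem is the cleaner route.) Combined with nonemptiness, each line is a singleton or a pair.

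\textbf{Conclusion and main obstacle.} Given $\sigma\in\fS(\ell,i,j)$, the point $(i,j)$ lies in a unique $L_k^{(1)}$ with $1\le k\le\ell$. That line is either $\{(i,j)\}$, so $\sigma\in\fS_{\mathrm V}(\ell,i,j)$, or $\{(i,j),(i',j')\}$, so $\sigma\in\fS_{\mathrm W}(\ell,i,j)$. Taking cardinalities of this disjoint union gives the stated formula. The only point requiring care is the claim that the points of one shadow line form a decreasing subsequence, so that Greene's theorem caps each line at two points. This follows directly from the definition of $L_k^{(r)}$ as the set of mutually unshadowed points.
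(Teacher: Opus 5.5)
Your proof is correct and matches the paper's argument: every first-level shadow line $L_k^{(1)}$ has one or two points, so $\fS(\ell,i,j)$ is the disjoint union of $\fS_{\mathrm{V}}(\ell,i,j)$ and $\fS_{\mathrm{W}}(\ell,i,j)$. The one difference is how you justify $|L_k^{(1)}|\le 2$: you use Greene's theorem (the points of a shadow line form a decreasing subsequence), whereas the paper just infers it from Algorithm~\ref{alg:Viennot} terminating at $r\le 2$. Your justification is the more explicit one, but your parenthetical alternative is slightly off, since two corners coming from one line of $\sigma^{(1)}$ are incomparable in $\sigma^{(2)}$ yet need not lie on the same line of $\sigma^{(2)}$ (incomparability alone already forces a third row).
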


\begin{proof}
Let $\sigma \in \fS(\ell,i,j)$.
Since $(\ell, \: n-\ell)$ has at most two rows, Algorithm~\ref{alg:Viennot} terminates at $r \leq 2$.
Hence each $| L_k^{(1)} | \leq 2$, and so $\operatorname{supp} M_\sigma = \mathrm{V}(\sigma) \sqcup \mathrm{W}(\sigma)$.
Upon fixing $(i,j)$, we have $\fS(\ell,i,j) = \fS_{\mathrm{V}}(\ell,i,j) \sqcup \fS_{\mathrm{W}}(\ell,i,j)$, which completes the proof.
\end{proof}

In the following lemma, we use the Iverson bracket $[P]$, defined to be 1 if $P$ is true and 0 if $P$ is false.
We also use the shorthand $[a]_+ \coloneqq \max\{a, \: 0 \}$.

\begin{lemma}
  \label{lemma:size SV SW}
  Let $(\ell, \: n-\ell)$ be a two-row shape.
  For all $1 \leq i,j \leq n$,
  \begin{align*}
    V(\ell,i,j) \coloneqq \left| \fS_{\mathrm{V}}(\ell,i,j) \right| & = 
    \sum_{k=1}^{\ell} \textstyle \tallballot{k-1}{i-k} \tallballot{k-1}{j-k} \tallballot{\ell - k \: \rightarrow [j - i]_{_+}}{n - \ell + k - i} \tallballot{\ell - k \: \rightarrow [i-j]_{_+}}{n - \ell + k - j} , \\
    W(\ell,i,j) \coloneqq \left| \fS_{\mathrm{W}}(\ell,i,j) \right| & = \sum_{\mathclap{(p,q) \in \mathscr{L}}} \textstyle \; \tallballot{p - [i<j]}{i - p - [i>j]} \! \tallballot{q-[i>j]}{j - q - [i<j]} \!
    \left( \!\!\!
      \begin{array}{l}
      \phantom{-} \tallballot{\ell - p \: \rightarrow \: [j-i]_{_+} + p - q}{n - \ell + p - i} \tallballot{\ell - q \: \rightarrow \: [i-j]_{_+} + q - p}{n - \ell + q - j} \\[2ex]
      - \tallballot{\ell - p \: \rightarrow \: [j-i]_{_+} + p - q - 1}{n - \ell + p - i} \tallballot{\ell - q \: \rightarrow \: [i-j]_{_+} + q - p - 1}{n - \ell + q - j}
  \end{array} \!\!
  \right),
  \end{align*}
  where
  \[
    \mathscr{L} \coloneqq  \left\{ (p,q) \in [\ell] \times [\ell] : 
    \begin{array}{l}
      p \leq i, \;\; q \leq j, \\ 
      p+q \geq \max\{i,j\}, \\ 
      0 \leq (p-q)(i-j) < (i-j)^2
    \end{array}    
     \right\},
  \]
  and where $\shortballot{a \rightarrow c}{b}$ and $\shortballot{a}{b}$ are the ballot numbers defined in~\eqref{skew ballot definition}--\eqref{ballot definition}.
\end{lemma}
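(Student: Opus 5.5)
By the transpose symmetry $\gT$ in~\eqref{S ij bijections}, and since $\gT$ swaps the roles of $\se{P}$ and $\se{Q}$ while preserving the sets $L_k^{(1)}$ up to transposition, it suffices to treat $i \le j$. The general formulas then follow by swapping $(i,p)\leftrightarrow(j,q)$, which is exactly what the Iverson brackets and $[\cdot]_+$ encode. Throughout, I will use the two-row structure of Algorithm~\ref{alg:Viennot}. The first row of $\se{P}$ (resp.\ $\se{Q}$) records the minimal row (resp.\ column) indices of the shadow lines $L_1^{(1)},\dots,L_\ell^{(1)}$. Each $L_k^{(1)}$ is either a singleton (V-type) or a descent pair $\{(i,j),(i',j')\}$ with $i<i'$, $j>j'$ (W-type), whose corner $(i',j)$ becomes a point of the second skeleton. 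The second row of $\se{P}$, $\se{Q}$ records these corners; for a two-row shape they form an increasing chain.

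\textbf{V case.} If $L_k^{(1)}=\{(i,j)\}$, then $\se{P}_{1,k}=i$ and $\se{Q}_{1,k}=j$. Conversely, given $(\se{P},\se{Q})$ with $i$, $j$ both in box $k$ of the first rows, I must check when the inverse Viennot map places $i$ and $j$ on the same singleton line. The line is a singleton exactly when no second-row entry is paired with column $k$. I will examine the pairing of second-row corners with first-row lines in the inverse construction and verify that this condition translates to a constraint only on $\se{P}|_{>i}$ relative to $\se{Q}|_{>j}$: the part of $\se{P}$ above $i$ must ``start'' $j-i$ boxes ahead. This yields a bijection $\Phi_{\mathrm V}$ to quadruples
\[
\SYT(k-1,i-k)\times\SYT(k-1,j-k)\times\SYT\big((\ell-k+[j-i]_+,\,n-\ell+k-i)/([j-i]_+)\big)\times\SYT(\ell-k,\,n-\ell+k-j).
\]
Here $\se{A}=\se{P}|_{<i}$ and $\se{B}=\se{Q}|_{<j}$, and the remainders are relabeled. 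Counting with Lemma~\ref{lemma:f special} then gives the $V$ formula, summed over $k$. The main check is that the skew offset $j-i$ is precisely the condition that the rows of $\se{P}$ and $\se{Q}$ remain compatible with the singleton property.

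\textbf{W case.} With $i<j$, the point $(i,j)$ is the upper-left (northwest) member of a descent pair, so $\se{P}_{1,p}=i$ for some $p$. The value $j$ is then the larger column index of the pair, so it does not appear as a first-row minimum. Instead, $j$ appears in the second row of $\se{Q}$ at box $q$, coming from the corner. This explains the shapes $(p-1,i-p)$ for $\se{P}|_{<i}$ and $(q,j-q-1)$ for $\se{Q}|_{<j}$. The constraints defining $\mathscr{L}$ (namely $p\le i$, $p+q\ge j$, $0\le q-p<j-i$) come from requiring these subtableaux and the partner point $(i',j')$ to exist consistently.

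For the remainders, I will rotate/relabel $\se{P}|_{>i}$ and $\se{Q}|_{>j}$ into skew two-row tableaux with offsets $j-i+p-q$ and $q-p$, respectively. The subtraction term arises from inclusion--exclusion. The naive product overcounts pairs in which the partner of $i$ is the wrong point, i.e.\ the pairing in the inverse light-and-shadow construction attaches $j$ to a different line. I expect these bad pairs to biject, via a reflection-principle shift of one box in each offset, with the product having both offsets decreased by $1$.

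I expect this step to be the main obstacle: identifying exactly the inequality among entries $>i$ and $>j$ that forces $(i,j)$ and its partner onto the same shadow line, and then realizing the complement as the offset-reduced product. My first attempt would be a path-crossing (Lindström--Gessel--Viennot style) involution on the two ballot paths encoding the remainders. Combining the two cases with Lemma~\ref{lemma:Slij count two-row} completes the proof.
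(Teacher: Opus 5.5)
You have the same architecture as the paper: split off $\se{P}|_{<i}$ and $\se{Q}|_{<j}$, relabel $\se{P}|_{>i}$ and $\se{Q}|_{>j}$ as skew two-row tableaux, and get the $W$ subtraction from a one-box shift. The gap is that the step which \emph{is} the proof is missing in both cases: translating ``$L_k^{(1)}$ is a singleton, or a pair, containing $(i,j)$'' into explicit tableau conditions.

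\textbf{The $V$ case.} The second-row corners form an increasing chain. So exactly $u(k)=n-\ell-\max\{i,j\}+k$ of them lie southeast of $(i,j)$: this is the smaller of the number with row $>i$ and the number with column $>j$. One has $L_k^{(1)}=\{(i,j)\}$ iff none of these corners lands on line $k$, which amounts to
\[
\se{P}_{1,k+t}<\se{P}_{2,n-\ell-u(k)+t} \quad\text{and}\quad \se{Q}_{1,k+t}<\se{Q}_{2,n-\ell-u(k)+t} \qquad \text{for } 1\le t\le u(k).
\]
Since $\se{P}$ has $n-\ell+k-i=u(k)+[j-i]_+$ second-row entries exceeding $i$, this says two things. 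First, $\se{P}|_{>i}$ is column-strict with offset $[j-i]_+$. Second, \emph{separately}, $\se{Q}|_{>j}$ is column-strict with offset $[i-j]_+$. This splitting into two independent conditions is what makes the count a product. You describe it instead as a joint constraint ``of $\se{P}|_{>i}$ relative to $\se{Q}|_{>j}$,'' and you assert the product without deriving either the offsets or the factorization.

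\textbf{The $W$ case.} Here you explicitly leave the key step open, and the tool you propose is misdirected. The missing fact is the rule assigning corners to lines. The corner in box $k'$ of the second row belongs to the \emph{maximal} $k$ with
\[
\se{P}_{1,k+s}<\se{P}_{2,k'+s} \quad\text{and}\quad \se{Q}_{1,k+s}<\se{Q}_{2,k'+s} \qquad \text{for all } 0\le s\le n-\ell-k'.
\]
Admissible $k$ form an initial segment, so maximality just means that $k+1$ fails.

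Take $\se{P}_{1,p}=i$ and $\se{Q}_{2,k'}=j$ with $k'=j-q$. Admissibility of $k=p$ is column-strictness of the two arrangements, with offsets $j-i+p-q$ and $q-p$. The $s=0$ inequalities give $q-p<j-i$ and $q\ge p$, and $k'\le k$ gives $p+q\ge j$; this is where $\mathscr{L}$ comes from. Non-maximality is exactly the condition that \emph{both} arrangements stay column-strict after sliding their top rows one box left.

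That bad set is therefore a Cartesian product. Each factor is in trivial bijection, by the slide itself, with skew SYT whose offset is one less, and this gives the subtracted product. No reflection principle or Lindstr\"om--Gessel--Viennot involution is involved. The two ``paths'' come from different tableaux, and the overcount is a failure of maximality, not a crossing.

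\textbf{Smaller slips.}
\begin{itemize}
\item $(i,j)$ is the north\emph{east} member of its pair, not the northwest one.
\item Your shape $(q,j-q-1)$ for $\se{Q}|_{<j}$ forces $j$ into box $j-q$ of the second row, not box $q$.
\item The claim that $i<j$ forces $(i,j)$ to be the northeast member needs justification, and so does $W(\ell,i,i)=0$. Both follow because otherwise $\sigma$ contains a decreasing subsequence of length $3$, contradicting Greene's theorem.
\end{itemize}
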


\begin{proof}

  Let $\sigma \in \fS(\ell,i,j)$.
  By the argument in the proof of Lemma~\ref{lemma:Slij count two-row}, for all $1 \leq k \leq \ell$ in Algorithm~\ref{alg:Viennot}, we have $| L_k^{(1)} | = 1$ or $2$. 
  Moreover, there are exactly $\ell$ many $k$-values such that $| L_k^{(1)}| = 1$, and thus $n - \ell$ many $k$-values such that $| L_k^{(1)}| = 2$.
  Let $a_k$, $b_k$, $A_k$, $B_k$ be the indices uniquely determined so that 
  \begin{equation}
    \label{two possibilities}
    \text{either } L^{(1)}_k = \{ (a_k, b_k) \} \text{ or } L^{(1)}_k = \{ (a_k, B_k), \: (A_k, b_k) \},
  \end{equation}
  where in the second case $a_k < A_k$ and $b_k < B_k$.
  Note that $A_k$ and $B_k$ are defined only for $n-\ell$ many values of $k$, and for these $k$-values, $(A_k, B_k)$ is the location of the unique \scalebox{2}{$\lrcorner$}-shape in the shadow line of $L_k^{(1)}$.
  Passing to the $r=2$ iteration of Algorithm~\ref{alg:Viennot}, then, for all $1 \leq k' \leq n - \ell$ there is a unique $k \geq k'$ such that
  \begin{equation}
    \label{L2}
    \{ (a'_{k'}, b'_{k'}) \} \coloneqq L_{k'}^{(2)} = \{ (A_k, B_k) \}.
  \end{equation}
  Thus $(a'_1, b'_1), \ldots, (a'_{n-\ell}, b'_{n-\ell})$ are the locations of the \scalebox{2}{$\lrcorner$}-shapes in the shadow diagram of $\sigma^{(1)}$, and we have $a'_1 < \cdots < a'_{n-\ell}$ and $b'_1 < \cdots < b'_{n-\ell}$.
  By step~\ref{L1} in Algorithm~\ref{alg:Viennot}, the unique $k$ determined by $k'$ in~\eqref{L2} is given by taking
  \begin{equation}
    \label{k from k'}
    k \text{ maximal such that } a_{k+s} < a'_{k'+s} \text{ and } b_{k+s} < b'_{k'+s} \text{ for all } 0 \leq s \leq n - \ell - k',
  \end{equation}
  since the shadow lines of the $L^{(1)}_{k}$'s do not intersect each other.
  En route to characterizing the two cases in~\eqref{two possibilities}, define
  \begin{equation}
    \label{u(k)}
    u(k) \coloneqq \big| \{ k' : a_k < a'_{k'} \text{ and } b_k < b'_{k'} \} \big|,
  \end{equation}
  and for $1 \leq t \leq u(k)$ put
  \begin{equation}
    \label{a'' b''}
    a''_t \coloneqq a'_{n - \ell - u(k) + t}, \qquad b''_t \coloneqq b'_{n - \ell - u(k) + t}.
  \end{equation}
  Thus $(a''_1, b''_1), \ldots, (a''_{u(k)}, b''_{u(k)})$ are the locations of the \scalebox{2}{$\lrcorner$}-shapes in the shadow diagram of $\sigma^{(1)}$ that lie strictly southeast of $(a_k,b_k)$, and we have $a_k < a''_1 < \cdots < a''_{u(k)}$ and $b_k < b''_1 < \cdots < b''_{u(k)}$.
  Note that $L_k^{(1)} = \{(a_k, b_k)\}$ if and only if $(a''_1, b''_1) = (A_K, B_K)$ for some $K > k$.
  Equivalently, by~\eqref{k from k'},
  \begin{equation}
    \label{size 1 prelim}
    L_k^{(1)} = \{ (a_k, b_k) \} \text{ if and only if $a_{k+t} < a''_t$ and $b_{k+t} < b''_t$ for all $1 \leq t \leq u(k)$}.
  \end{equation}
  On the other hand, by~\eqref{two possibilities}--\eqref{k from k'}, we have $L_k^{(1)} = \{(a_k, B_k), \: (A_k, b_k)\}$ if and only if there is some (necessarily unique) $k' \leq k$ such that $A_k = a'_{k'}$ and $B_k = b'_k$;
  in turn,
  \begin{equation}
    \label{size 2 prelim}
    L_k^{(1)} = \{(a_k, b'_{k'}), \: (a'_{k'}, b_k) \} \text{ if and only if $k' \leq k$ such that $k$ and $k'$ satisfy~\eqref{k from k'}}.
  \end{equation}
  
  Next we translate the preceding observations into tableaux.
  Let $\RS(\sigma) = (\se{P}, \se{Q})$.  
  Note that in either case of~\eqref{two possibilities} we have $a_k = \min\{ i : (i,j) \in L^{(1)}_k \}$ and $b_k = \min\{ j : (i,j) \in L^{(1)}_k \}$;
  likewise in~\eqref{L2} we have $a'_{k'} = \min\{i : (i,j) \in L^{(2)}_{k'} \}$ and $b'_{k'} = \min\{j : (i,j) \in L^{(2)}_{k'} \}$.
  Thus by step~\ref{fill rows} of Algorithm~\ref{alg:Viennot}, along with~\eqref{a'' b''},
  \begin{equation}
    \label{a b to P Q}
    \begin{alignedat}{4}
    & a_k && = \se{P}_{1,k} \qquad && b_k && = \se{Q}_{1, k} \\ 
    & a'_{k'} && = \se{P}_{2,k'} \qquad && b'_{k'} && = \se{Q}_{2, k'} \\
    & a''_{t} && = \se{P}_{2,n-\ell-u(k)+t} \qquad && b''_{t} && = \se{Q}_{2, n - \ell - u(k) + t}
  \end{alignedat}
  \end{equation}
  for all $1 \leq k \leq \ell$ and $1 \leq k' \leq n - \ell$ and $1 \leq t \leq u(k)$.
  Upon combining~\eqref{size 1 prelim} and~\eqref{a b to P Q}, we obtain the following for all $1 \leq k \leq \ell$:
  \begin{equation}
  \label{size 1 iff}
  L^{(1)}_k = \{ (a_k, b_k) \} \text{ if and only if } 
  \left( 
    \begin{array}{l}
    \se{P}_{1,k} = a_k, \; \se{Q}_{1,k} = b_k, \\ 
    \text{and for all $1 \leq t \leq u(k)$}, \\ 
    \se{P}_{1, k+t} < \se{P}_{2, n - \ell - u(k) + t} \text{ and } \se{Q}_{1, k+t} < \se{Q}_{2, n - \ell - u(k) + t}
    \end{array}  
  \right).
  \end{equation}
  Likewise, upon combining~\eqref{size 2 prelim} and~\eqref{a b to P Q}, we obtain the following for all $1 \leq k \leq \ell$:
  \begin{equation}
    \label{size 2 iff}
    L^{(1)}_{k} = \{ (a_k, b'_{k'}), \: (a'_{k'}, b_k) \} \text{ if and only if } \left( 
    \begin{array}{l}
    \se{P}_{1,k} = a_k, \; \se{Q}_{1,k} = b_k, \: \se{P}_{2, k'} = a'_{k'}, \; \se{Q}_{2, k'} = b'_{k'}, \\ 
    \text{$k' \leq k$, and $k$ is maximal such that,} \\ 
    \text{for all $0 \leq s \leq n - \ell - k'$}, \\ 
    \se{P}_{1, k+s} < \se{P}_{2, k'+s} \text{ and } \se{Q}_{1, k+s} < \se{Q}_{2, k'+s}
    \end{array}  
  \right).
  \end{equation}

  We will now use~\eqref{size 1 iff} and~\eqref{size 2 iff} to prove the $V(\ell,i,j)$ and $W(\ell,i,j)$ formulas, respectively, as stated in this lemma.
  We begin with the $V(\ell,i,j)$ formula.
  By the definitions~\eqref{VW}--\eqref{Slij SV SW},
  \begin{equation} 
    \label{Lk singeton in V proof}
    \sigma \in \fS_{\mathrm{V}}(\ell,i,j) \text{  if and only if } L_k^{(1)} = \{(i,j)\} \text{ for some } 1 \leq k \leq \ell.
  \end{equation}
  In turn, by~\eqref{size 1 iff},
  \begin{equation}
    \label{size 1 iff ij}
    L_k^{(1)} = \{ (i,j) \} \text{ if and only if } \left( 
    \begin{array}{l}
    \se{P}_{1,k} = i, \; \se{Q}_{1,k} = j, \\ 
    \text{and for all $1 \leq t \leq u(k)$}, \\ 
    \se{P}_{1, k+t} < \se{P}_{2, n - \ell - u(k) + t} \text{ and } \se{Q}_{1, k+t} < \se{Q}_{2, n - \ell - u(k) + t}
    \end{array}  
  \right).
  \end{equation}
  Combining~\eqref{Lk singeton in V proof} and~\eqref{size 1 iff ij}, we obtain
  \begin{equation}
    \label{size 1 RS}
    \RS( \fS_{\mathrm{V}}(\ell,i,j)) = \coprod_{k=1}^\ell 
    \Big\{ ( \se{P}, \se{Q} ) \in \SYT(\la) \times \SYT(\la) : 
    (\se{P}, \se{Q}) \text{ satisfies right-hand side of~\eqref{size 1 iff ij}}
    \Big\}.
  \end{equation}
  Observe that $(\se{P}, \se{Q})$ satisfies the right-hand side of~\eqref{size 1 iff ij} if and only if the following two conditions hold:
  \begin{itemize} 
  \item $\se{P}_{1,k} = i$ and $\se{Q}_{1,k} = j$;
  \item each of the two-row arrangements below is column strict (where ellipses denote consecutive entries from a row of $\se{P}$ or $\se{Q}$, respectively):
  \begin{equation}
  \label{subtableaux}
  \begin{array}{r} 
  \boxed{
  \begin{array}{lllllll}
  & & \se{P}_{1, k+1} & \cdots\cdots\cdots & \se{P}_{1,k+u(k)} & \cdots & \se{P}_{1, \ell} \\ 
  \se{P}_{2, i-k+1} & \cdots\cdots\cdots & \se{P}_{2, n - \ell - u(k) + 1} & \cdots\cdots\cdots & \se{P}_{2, n - \ell} & &
  \end{array}
}
\\[4ex] 
\boxed{
\begin{array}{lllllll}
  & & \se{Q}_{1, k+1} & \cdots\cdots\cdots & \se{Q}_{1,k+u(k)} & \cdots & \se{Q}_{1, \ell} \\ 
  \se{Q}_{2,j-k+1} & \cdots & \se{Q}_{2, n - \ell - u(k) + 1} & \cdots\cdots\cdots & \se{Q}_{2, n - \ell} & &
\end{array}
}
\end{array}
\end{equation}
\end{itemize}
In each arrangement in~\eqref{subtableaux}, the top row consists of the last $\ell-k$ entries from the top row of $\se{P}$ (resp., $\se{Q}$);
likewise, the bottom row ends with the last $u(k)$ entries from the bottom row of $\se{P}$ (resp., $\se{Q}$), which are now left-justified with the top row.
By~\eqref{u(k)} we have
  \begin{equation}
    \label{u(k) ij}
    u(k) = n - \ell - \max\{i,j\} + k,
  \end{equation}
since the number of entries less than $i$ (resp., $j$) in the bottom row of $\se{P}$ (resp., $\se{Q}$) equals $i-k$ (resp., $j-k$);
thus by~\eqref{u(k) ij} the lengths of the left-hand ``tails'' in~\eqref{subtableaux} are given by
  \begin{equation}
    \label{overhang}
    \begin{split}
    (n - \ell - u(k) + 1) - ( i - k + 1 ) &= \max\{i,j\} - i = [j - i]_+ , \\
    (n - \ell - u(k) + 1 ) - ( j - k + 1 ) &= \max\{i,j\} - j = [i - j]_+.
    \end{split}
  \end{equation}
  Further observe that the entries in each arrangement in~\eqref{subtableaux} are precisely the entries from $\se{P}$ (resp., $\se{Q}$) that are greater than $i$ (resp., $j$);
  therefore the full bottom row has length $n - \ell + k - i$ (resp., $n - \ell + k - j$).
  Thus by~\eqref{overhang},
  \begin{equation}
  \label{P Q shapes for V}
  \begin{array}{l}
  \text{the $\se{P}$ arrangement in~\eqref{subtableaux} has skew shape $(\ell - k + [j-i]_+, \: n - \ell + k-i) / ([j-i]_+)$}, \\ 
  \text{the $\se{Q}$ arrangement in~\eqref{subtableaux} has skew shape $(\ell - k + [i-j]_+, \: n - \ell + k - j) / ([i-j]_+)$}.
  \end{array}
  \end{equation}
  Upon subtracting $i$ (resp., $j$) from every entry, each arrangement in~\eqref{subtableaux} becomes an SYT with the skew shape given in~\eqref{P Q shapes for V}.
  Hence starting from~\eqref{size 1 RS}, and recalling the notation $\se{P}|_{<i}$ from~\eqref{T| def}, we have a bijection
  \begin{align}
    \Phi_{\mathrm{V}} : \RS(\fS_{\mathrm{V}}(\ell,i,j)) & \longrightarrow 
    \coprod_{k=1}^\ell \left( \!\!\!\!
      \begin{array}{r}
      \SYT(k \! - \!1, i \!- \!k) \\ 
      {} \times \SYT(k \! - \! 1, j \!-\!k)
      \end{array}
     \!\!\! \right) 
     \times 
     \left( \!\!\!\!
      \begin{array}{l}
       \phantom{{} \times {}} \SYT((\ell \! - \! k + \! [j \! - \! i]_+, \: n \! - \! \ell \! + \! k \! - \! i) / ([j \! - \! i]_+)) \\  
       {} \times \SYT((\ell \! - \! k \! + \! [i \! - \! j]_+, \: n \! - \! \ell \! + \! k \! - \! j) / ([i \! - \! j]_+))
       \end{array} 
       \!\! \right), \label{SV bijection} \\[1ex] 
    ( \se{P}, \se{Q} ) & \longmapsto \big( (\se{P}|_{<i}, \se{Q}|_{<j}), (\se{R}, \se{S}) \big) \nonumber
  \end{align}
  where $\se{R}$ (resp., $\se{S}$) is formed from $\se{P}|_{> i}$ (resp., $\se{Q}|_{> j}$) by sliding the top row to the left to obtain the alignment in~\eqref{subtableaux}, and then decreasing all entries by $i$ (resp., $j$).
  This process is clearly invertible, so $\Phi_{\mathrm{V}}^{-1}$ is well defined.
  Since RS is injective, the $V(\ell,i,j)$ formula follows by using Lemma~\ref{lemma:f special} to take the cardinality of the right-hand side of~\eqref{SV bijection}.

  It remains to prove the $W(\ell,i,j)$ formula.
  By the definitions~\eqref{VW}--\eqref{Slij SV SW},
  \begin{equation}
    \label{doubleton}
    \sigma \in \fS_{\mathrm{W}}(\ell,i,j) \text{ if and only if } L_k^{(1)} = \{ (i,j), \: (i', j') \} \text{ for some } 1 \leq k \leq \ell,
  \end{equation}
  for some $(i', j') \neq (i,j)$.
  Note that if $(i,j)$ is northeast of $(i', j')$, then $i < j$;
  to see this, observe that $i \geq j$ forces at least one value $i'' < i$ such that $\sigma(i'') \coloneqq j'' > j$, but then $\sigma$ would contain the decreasing subsequence $(j'', j, j')$ which contradicts Greene's theorem~\eqref{Greene} since $\sigma$ has two-row shape.
  By the same argument, if $(i,j)$ is southwest of $(i', j')$, then $i > j$.
  Hence automatically we have $W(\ell,i,i) = 0$, and moreover, we may as well assume that $i \leq j$ and that $(i,j)$ is northeast of $(i',j')$:
  \begin{equation}
    \label{i < j assumption}
    \text{Throughout this proof, we assume in~\eqref{doubleton} that $i \leq j$ and $i < i'$ and $j > j'$}.
  \end{equation}
  At the end of the proof we will easily use the symmetry of Viennot's construction to extend our result to all $(i,j)$.
  By~\eqref{size 2 iff} and~\eqref{i < j assumption},
  \begin{equation}
    \label{size 2 iff ij}
    L_k^{(1)} = \{ (i,j), \: (i', j') \} \text{ if and only if } 
    \left( 
    \begin{array}{l}
    \se{P}_{1,k} = i, \; \se{Q}_{1,k} = j', \: \se{P}_{2, k'} = i', \; \se{Q}_{2, k'} = j, \\ 
    \text{and $k$ is maximal such that,} \\ 
    \text{for all $0 \leq s \leq n - \ell - k'$}, \\ 
    \se{P}_{1, k+s} < \se{P}_{2, k'+s} \text{ and } \se{Q}_{1, k+s} < \se{Q}_{2, k'+s}
    \end{array}  
  \right)
  \end{equation}
  for some $1 \leq k' \leq \min\{k, \: n-\ell\}$.
  Combining~\eqref{doubleton} and~\eqref{size 2 iff ij}, we obtain
  \begin{equation}
    \label{size 2 RS}
    \RS( \fS_{\mathrm{W}}(\ell,i,j)) = \coprod_{k=1}^\ell \; \coprod_{k'=1}^
    k
    \left\{ ( \se{P}, \se{Q} ) \in \SYT(\la) \times \SYT(\la) : 
    \begin{array}{l} 
    (\se{P}, \se{Q}) \text{ satisfies} \\ 
    \text{right-hand side of~\eqref{size 2 iff ij}} \\
    \text{for some $i' > i$ and $j' < j$}
    \end{array}
    \right\}.
  \end{equation}
  (Note that neither $\se{P}_{2,k'}$ nor $\se{Q}_{2,k'}$ exists when $k' > n - \ell$, so in this range the corresponding set in~\eqref{size 2 RS} is automatically empty; hence there is no harm in letting $k'$ range all the way up to $k$.)
  Observe that $(\se{P}, \se{Q})$ satisfies the right-hand side of~\eqref{size 2 iff ij} for some $i' > i$ and $j' < j$ if and only if the following four conditions hold (where the fourth condition is required for the maximality of $k$ in~\eqref{size 2 iff ij}):
  \begin{itemize}
    \item $\se{P}_{1,k} = i$ and $\se{Q}_{2,k'} = j$;
    \item $k + k' > i$ (in order to satisfy $\se{P}_{1,k} < \se{P}_{2,k'}$, which is the $s=0$ case in~\eqref{size 2 iff ij});
    \item each of the two-row arrangements below is column strict (where ellipses denote consecutive entries from a row of $\se{P}$ or $\se{Q}$, respectively):
    \begin{equation}
  \label{subtableauxW}
  \begin{array}{r} 
  \boxed{
  \begin{array}{lllllll}
  & & \se{P}_{1, k+1} & \cdots\cdots \cdots\cdots & \se{P}_{1,n - \ell + k - k'} & \cdots & \se{P}_{1, \ell} \\ 
  \se{P}_{2, i-k+1} & \cdots\cdots & \se{P}_{2, k' + 1} & \cdots \cdots\cdots\cdots & \se{P}_{2, n - \ell} & &
  \end{array}
}
\\[4ex] 
\boxed{
\begin{array}{lllllll}
  & & \se{Q}_{1, j-k'+1} & \cdots & \se{Q}_{1,n-\ell+k-k'} & \cdots & \se{Q}_{1, \ell} \\ 
  \se{Q}_{2,k'+1} & \cdots & \se{Q}_{2, j-k+1} & \cdots & \se{Q}_{2, n - \ell} & &
\end{array}
}
\end{array}
\end{equation}

\item it is \emph{not} the case that both arrangements in~\eqref{subtableauxW} remain column-strict when the top row is shifted left by one entry.
  \end{itemize}
Note that since $\se{Q}_{2,k'} = j$, we have $\se{Q}_{1,1} < \cdots < \se{Q}_{1,j - k'} < j < \se{Q}_{2,k'+1} < \cdots < \se{Q}_{2,n-\ell}$.
For this reason, in the top row of the $\se{Q}$ arrangement in~\eqref{subtableauxW}, we suppressed the entries $\se{Q}_{1,k+s}$ for $s \leq j-k-k'$, since these are precisely the top entries that are less than $j$, and therefore automatically less than the corresponding entries $\se{Q}_{2,k'+s}$, which are all greater than $j$.
We observe that the entries in each arrangement in~\eqref{subtableauxW} are precisely the entries from $\se{P}$ (resp., $\se{Q}$) that are greater than $i$ (resp., $j$).
Moreover, 
\begin{equation}
  \label{P Q shapes for W}
  \begin{array}{l}
  \text{the $\se{P}$ arrangement in~\eqref{subtableauxW} has skew shape $(\ell + k' - i, \: n - \ell + k - i) / (k + k' - i)$}, \\ 
  \text{the $\se{Q}$ arrangement in~\eqref{subtableauxW} has skew shape $(\ell - k', \: n - \ell - k') / (j - k - k')$}.
  \end{array}
\end{equation}
Now taking the skew shapes from~\eqref{P Q shapes for W}, define the sets
\begin{equation}
  \label{A A' B}
  \begin{split}
    \mathscr{A} & \coloneqq 
    \left(
      \begin{array}{l}
      \phantom{{} \times {}} \SYT\big((\ell + k' - i, \: n - \ell + k - i) / (k + k' - i) \big) \\[.5ex]
      {} \times \SYT \big((\ell - k', \: n - \ell - k') / (j - k - k') \big)
      \end{array}
    \right), \\[1ex] 
    \mathscr{A}' & \coloneqq \left\{ (\se{R}, \se{S}) \in \mathscr{A} : \begin{array}{l} 
    \text{both $\se{R}$ and $\se{S}$ remain column strict} \\ 
    \text{upon sliding their top row one box to the left}
    \end{array} 
    \right\},\\[1ex]
    \mathscr{B} & \coloneqq \left(
      \begin{array}{l}
      \phantom{{} \times {}} \SYT\big((\ell + k' - i - 1, \: n - \ell + k - i) / (k + k' - i - 1) \big) \\[.5ex]
      {} \times \SYT \big((\ell - k' - 1, \: n - \ell - k') / (j - k - k' - 1) \big)
      \end{array}
    \right),
  \end{split}
\end{equation}
and note the obvious bijection $\mathscr{A}' \longrightarrow \mathscr{B}$ given by sliding the top row of each tableau to the left by one box, yielding
\begin{equation}
  \label{A' B}
  \left| \mathscr{A}' \right| = \left| \mathscr{B} \right|.
\end{equation}
Hence starting from~\eqref{size 2 RS}, we have a bijection
  \begin{align}
    \Phi_{\mathrm{W}} : \RS(\fS_{\mathrm{W}}(\ell,i,j)) & \longrightarrow 
    \coprod_{k=1}^\ell \: \coprod_{k' = i-k+1}^k \left( \!\!\!\!
      \begin{array}{r}
      \SYT(k-1, \: i-k) \\ 
      {} \times \SYT(j-k', \: k'-1)
      \end{array}
     \!\!\! \right) 
     \times 
     \big( \mathscr{A} \setminus \mathscr{A}' \big), \label{SW bijection} \\[1ex] 
    ( \se{P}, \se{Q} ) & \longmapsto \big( (\se{P}|_{<i}, \se{Q}|_{<j}), (\se{R}, \se{S})) \nonumber
  \end{align}
  where $\se{R}$ (resp., $\se{S}$) is formed from $\se{P}|_{>i}$ (resp., $\se{Q}|_{>j}$) by sliding the top row to the left to obtain the alignment in~\eqref{subtableauxW}, and then decreasing all entries by $i$ (resp., $j$).
  This process is clearly invertible, so $\Phi_{\mathrm{W}}^{-1}$ is well defined.
  We now use Lemma~\ref{lemma:f special} and~\eqref{A A' B}--\eqref{A' B} to take the cardinality of the right-hand side of~\eqref{SW bijection}:
  \begin{equation}
    \label{Wlij k kprime}
    W(\ell,i,j) = \sum_{k=1}^\ell \; \sum_{k' = i-k+1}^k \textstyle \tallballot{k-1}{i-k} \tallballot{j-k'}{k'-1} \left[ 
      \begin{array}{l}\phantom{ {} - {} } \tallballot{\ell-k \rightarrow k + k' - i}{n - \ell + k - i} \tallballot{\ell + k' - j \rightarrow j - k - k'}{n - \ell - k'} \\[2ex] 
      {} - {} \tallballot{\ell-k \rightarrow k + k' - i - 1}{n - \ell + k - i} \tallballot{\ell + k' - j \rightarrow j - k - k' - 1}{n - \ell - k'}
      \end{array}
    \right].
  \end{equation}
  To make the symmetry more apparent in~\eqref{Wlij k kprime}, we reindex via the indices $p = k$ and $q = j - k'$, yielding
\begin{equation}
    \label{Wlij pq}
    W(\ell,i,j) = \sum_{p=1}^\ell \: \sum_{q=j-p}^{p - i + j - 1} \textstyle \tallballot{p-1}{i-p} \tallballot{q}{j-q-1} \left[ 
      \begin{array}{l}\phantom{ {} - {} } \tallballot{\ell-p \rightarrow j-i + p - q}{n - \ell + p - i} \tallballot{\ell - q \rightarrow q-p}{n - \ell + q -j} \\[2ex] 
      {} - {} \tallballot{\ell-p \rightarrow j-i + p - q - 1}{n - \ell + p - i} \tallballot{\ell -q \rightarrow q-p-1}{n - \ell + q -j}
      \end{array}
    \right].
  \end{equation}
  We can rewrite~\eqref{Wlij pq} as a single sum over the set $\{ (p,q) \in [\ell] \times [\ell] : p + q \geq j \text{ and } q-p < j-i \}$, but this includes many terms that vanish;
  by inspecting the ballot numbers in~\eqref{Wlij pq} to see when their arguments become negative, we see that the effective range of the sum is the index set
  \[
    \mathscr{K} \coloneqq \Big\{ (p,q) \in [\ell] \times [\ell] : p \leq i \text{ and } p + q \geq j \text{ and } 0 \leq q-p < j-i \Big\}.
  \]
  Hence we arrive at the version of our $W(\ell,i,j)$ formula presented in the introduction (see Theorem~\ref{thm:two-row in intro}), where we assumed $i \leq j$.
  Indeed, we now recall our ongoing assumption~\eqref{i < j assumption} that $i \leq j$;
  in particular, we proved then that $(i,j)$ is the northeasternmost (resp., southwesternmost) of the two points on its shadow line if and only if $i < j$ (resp., $i > j$).
  Hence the formula~\eqref{Wlij pq} is true for $i \leq j$ (correctly returning zero for $i=j$), but returns zero otherwise.
  Clearly the arguments in this proof are all identical for the symmetrical case where $(i,j)$ is the southwesternmost of the two points on its shadow line and $i > j$;
  thus one can extend~\eqref{Wlij pq} to all $(i,j)$ by stipulating that $W(\ell,i,j) = W(\ell,j,i)$ when $i > j$, as we did in the introduction.
  In order to give a unified formula, however, we simply rewrite~\eqref{Wlij pq} to account for the possibility that $i > j$, which leads to the Iverson brackets and $[ \;\; ]_+$ symbols in the statement of this lemma.
  This new sum ranges over the symmetrized set
  \[
    \mathscr{L} \coloneqq \begin{cases} 
    \{ (p,q): (p,q) \in \mathscr{K} \} & \text{if } i \leq j, \\ 
    \{ (q,p) : (p,q) \in \mathscr{K} \} & \text{if } i \geq j,
    \end{cases}
  \]
  which coincides with the index set $\mathscr{L}$ given in the statement of this lemma;
  note that there is no ambiguity between the two cases since $\mathscr{K} = \varnothing$ when $i=j$.
  This completes the proof of Lemma~\ref{lemma:size SV SW}.
\end{proof}

\begin{theorem}
  \label{thm:two-row}
  Let $\la = (\ell, n-\ell)$ be a two-row shape, and recall $P^\la_{ij}$ from~\eqref{P lambda}.
  For all $1 \leq i,j \leq n$,
  \[
    P^\la_{ij} = \frac{V(\ell,i,j) + W(\ell,i,j)}{\tallballot{\ell}{n - \ell}^2},
  \]
  where the functions $V$ and $W$ are those defined in Lemma~\ref{lemma:size SV SW}, and where $\shortballot{a}{b}$ is the ballot number defined in~\eqref{ballot definition}.
\end{theorem}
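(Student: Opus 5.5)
The proof will mirror that of Theorem~\ref{thm:hooks}, since all the combinatorial work has already been done in Lemmas~\ref{lemma:Slij count two-row} and~\ref{lemma:size SV SW}. The plan is to compute the un-normalized entry $\check{P}^\la_{ij}$ and then divide by $|\fS^\la|$.

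First, by~\eqref{P lambda tilde ij} and the shorthand~\eqref{Slij SV SW}, we have $\check{P}^\la_{ij} = |\fS(\ell,i,j)|$. Lemma~\ref{lemma:Slij count two-row} then splits this count according to whether the shadow line $L^{(1)}_k$ through $(i,j)$ is a singleton or a doubleton:
\[
\check{P}^\la_{ij} = \big|\fS_{\mathrm{V}}(\ell,i,j)\big| + \big|\fS_{\mathrm{W}}(\ell,i,j)\big|.
\]
Next, Lemma~\ref{lemma:size SV SW} identifies these two cardinalities with $V(\ell,i,j)$ and $W(\ell,i,j)$ respectively. This holds for all $1 \le i,j \le n$, since the lemma was stated in symmetrized form with the Iverson brackets and the index set $\mathscr{L}$ covering both $i \le j$ and $i > j$. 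Hence $\check{P}^\la_{ij} = V(\ell,i,j) + W(\ell,i,j)$.

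Finally, to normalize, I use~\eqref{P lambda} together with~\eqref{size S n lambda}, which give $|\fS^\la| = (f^\la)^2$. Lemma~\ref{lemma:f special} gives $f^{(\ell,n-\ell)} = \shortballot{\ell}{n-\ell}$, so the denominator is $\tallballot{\ell}{n-\ell}^2$, and the displayed formula follows.

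There is essentially no obstacle here. The only point worth checking is consistency with the intro version (Theorem~\ref{thm:two-row in intro}), which asserts $P^\la_{ij} = P^\la_{ji}$ and uses the $\mathscr{K}$-indexed formula for $i \le j$. The symmetry comes from Lemma~\ref{lemma:symmetries}. For $i \le j$ the bracket terms specialize: $[j-i]_+ = j-i$, $[i-j]_+ = 0$, $[i<j] = 1$ (when $i<j$), and $\mathscr{L} = \mathscr{K}$. With these substitutions the lemma's formula reduces to the intro's formula, using the reindexing carried out in~\eqref{Wlij pq}.
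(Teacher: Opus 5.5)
Your proposal is correct and follows the paper's proof: write $\check{P}^\la_{ij} = |\fS(\ell,i,j)|$, split it with Lemma~\ref{lemma:Slij count two-row}, evaluate the two pieces with Lemma~\ref{lemma:size SV SW}, and normalize by $|\fS^\la| = (f^\la)^2 = \shortballot{\ell}{n-\ell}^2$ using~\eqref{size S n lambda} and Lemma~\ref{lemma:f special}. Your closing consistency check against Theorem~\ref{thm:two-row in intro} is an optional extra, and it needs no further reindexing: the lemma's formula is already indexed by $(p,q)$, so specializing the brackets and observing $\mathscr{L}=\mathscr{K}$ for $i\le j$ suffices.
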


\begin{proof}
  We have $\check{P}^\la_{ij} = \left| \fS(\ell,i,j) \right|$ by~\eqref{P lambda tilde ij} and~\eqref{Slij SV SW}.
  Thus by Lemmas~\ref{lemma:Slij count two-row} and~\ref{lemma:size SV SW}, the numerator in the theorem equals $\check{P}^\la_{ij}$.
  The result now follows directly from~\eqref{P lambda tilde}, \eqref{size S n lambda}, and Lemma~\ref{lemma:f special}.
\end{proof}

\begin{rem}[Diagrammatic interpretation and bandwidth]
\label{rem:visualize two-row}
  In this remark, we exclude the trivial case $\ell = n$.
  The following construction (analogous to that in Remark~\ref{rem:visualize hooks}) gives a simple way to visualize the contribution to the support of $P^\la$ coming from the $V$ and $W$ terms in Theorem~\ref{thm:two-row}.
  This time, when we write ``$\la$,'' we refer to the result of shearing the Young diagram of $\la$ by 45 degrees toward the left, then superimposing it snugly inside the strictly upper-diagonal part of a generic $n \times n$ matrix.
  (We depict $\la$ in red in the equation at the end of this remark.)
  In particular, the upper-left box of $\la$ occupies matrix position $(\bar{\ell}-1, \: \bar{\ell})$, the upper-right box occupies position $(\bar{\ell}-1, \: n)$, and the lower-right box occupies position $(\bar{\ell},\: 2\bar{\ell}-1)$.
  To collect all the $(i,j)$'s together, define $n \times n$ matrices $P^\la_{\mathrm{V}}$ and~$P^\la_{\mathrm{W}}$ via
    \[
      (P^\la_{\mathrm{V}})_{ij} = V(\ell,i,j) \big/ | \fS^\la |, \qquad  (P^\la_{\mathrm{W}})_{ij} = W(\ell,i,j) \big/ | \fS^\la |.
    \]
    Then Theorem~\ref{thm:two-row} can be rewritten as the matrix equation
    \begin{equation}
      \label{matrix equation two-row}
      P^\la = P^\la_{\mathrm{V}} + P^\la_{\mathrm{W}}.
    \end{equation}
    Both $P^\la_{\mathrm{V}}$ and $P^\la_{\mathrm{W}}$ are symmetric but not persymmetric.
    Moreover, their support is given as follows:
    \begin{itemize}
      \item \begin{itemize} 
      \item If $\ell \neq \lfloor n/2 \rfloor$, then the support of $P^\la_{\mathrm{V}}$ is the hexagonal region
      \begin{equation}
        \label{supp PV}
        \operatorname{supp} P^\la_{\mathrm{V}} = \Big\{ (i,j) : |i-j| < \min\{ \bar{\ell}, i, j \} \Big\}.
      \end{equation}
      Note that the vertices of this hexagon are the following: $(1,1)$, $(n,n)$, $(\ell, n)$, $(n, \ell)$, the lower-right box of $\la$, and its reflection about the diagonal.

      \item If $\ell = (n+1)/2$, then in~\eqref{supp PV} we include the constraint that $\max\{i,j\}$ is odd.

      \item If $\ell = n/2$, then $\operatorname{supp} P^\la_{\mathrm{V}} = \varnothing$.

      \end{itemize}

      \item \begin{itemize} 
      
      \item If $\ell \neq n/2$, then
      \begin{equation}
        \label{supp PW}
        \operatorname{supp} P^\la_{\mathrm{W}} = \Big\{ (i,j) : 1 \leq |i-j| \leq \ell \Big\}.
      \end{equation}
      Note that this region is just the band containing $\la$, and its reflection about the diagonal.

      \item If $\ell = n/2$, then in~\eqref{supp PW} we include the constraint that if $|i-j| =1$, then $\max\{i,j\}$ is even.
    \end{itemize}
    \end{itemize}
    Below is an example of~\eqref{matrix equation two-row} where $\la = (9,6)$, that is, where $n=15$ and $\ell = 9$:
    \[
  \ytableausetup{boxsize=.32em}
  \begin{tikzpicture}[baseline=(current bounding box.center)]
    \node at (0,0) {\includegraphics[width=2cm]{Heatmaps15/b09.pdf}};
    \node [below] at (0,-1) {$P^\la$};
  \end{tikzpicture}
  =
  \begin{tikzpicture}[baseline=(current bounding box.center)]
    \node at (0,0) {\includegraphics[width=2cm]{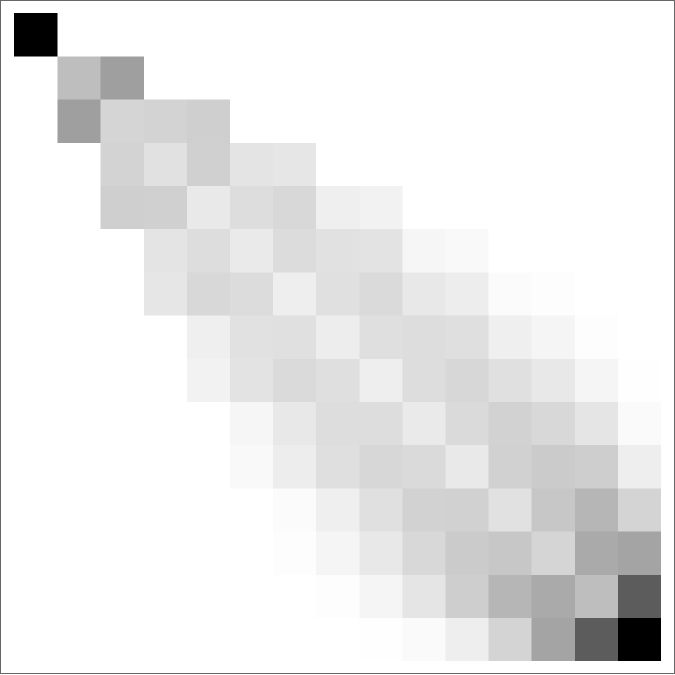}};
    \node [below] at (0,-1) {$P^\la_{\mathrm{V}}$};
    \node [xslant=-1, red] at (.45,.2) {\ydiagram{9,6}};
  \end{tikzpicture}
  +
  \begin{tikzpicture}[baseline=(current bounding box.center)]
    \node at (0,0) {\includegraphics[width=2cm]{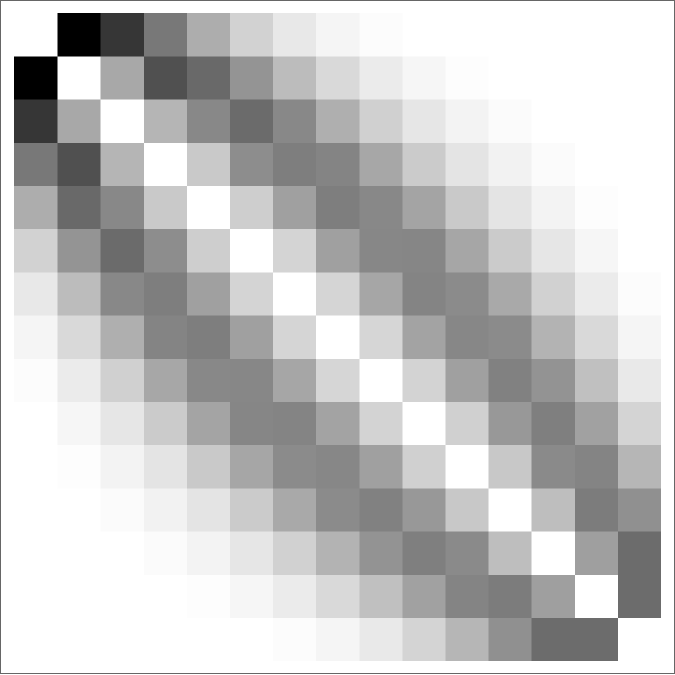}};
    \node [below] at (0,-1) {$P^\la_{\mathrm{W}}$};
    \node [xslant=-1, red] at (.45,.2) {\ydiagram{9,6}};
  \end{tikzpicture}
\]
    From the observations above, it follows that if $\sigma \in \fS^{(\ell, n-\ell)}$, then $\sigma$ has bandwidth $\ell$, where the \emph{bandwidth} of a permutation is the maximum value of $|i - \sigma(i)|$.
  As mentioned in Remark~\ref{rem:visualize hooks}, this limitation makes the two-row case very different from the hook case.
\label{rem:two-row support}

\end{rem}

\section{Main result for rectangular shapes}
\label{sec:rectangles}

Throughout this section, we fix positive integers $\ell$ and $m$, and we assume that $\la = (\ell^m)$ is a rectangular shape.
It is well known~\cite{Gorska}*{eqn.~(1.1)} that for rectangular shapes $\la$, the hook-length formula~\eqref{hook length formula} reduces to the form
\begin{equation}
  \label{f rectangles}
  f^\la = f^{(\ell^m)} = \left[ \prod_{s=0}^{m-1} \frac{s!}{(\ell + s)!} \right] (\ell m)!.
\end{equation}
For any fixed $m \geq 2$, the terms of the sequence $f^{(1^m)}, f^{(2^m)}, f^{(3^m)}, \ldots$ are called the \emph{$m$-dimensional Catalan numbers}, and the $m=2$ case recovers the classical Catalan numbers; see entry A060854 in the OEIS~\cite{OEIS}.

In the case of rectangular shapes, the RS correspondence exhibits an especially nice vertical reflection property.
Given a standard Young tableau $\se{P}$, let $\se{P}^{\uparrow}$ denote the result of reversing the order of the entries within each column of $\se{P}$.
In~\cite{EHMS}*{Def.~3.1}, $\se{P}$ is said to be \emph{admissible} if $\se{P}^{\uparrow}$ is row strict.
Moreover, by~\cite{EHMS}*{Prop.~3.2}, if $\RS(\sigma) = (\se{P}, \se{Q})$ with $\se{P}$ admissible, then
\begin{equation}
  \label{admissible fact}
  \se{Q} = \sigma \cdot \se{P}^{\uparrow},
\end{equation}
where $\sigma \cdot \se{P}^{\uparrow}$ denotes the result obtained from $\se{P}^{\uparrow}$ by replacing each entry $i$ with $\sigma(i)$.

\begin{lemma}
  \label{lemma:RS rectangles}
  Let $\sigma \in \fS^{\la}_{ij}$, where $\la = (\ell^m)$ is a rectangular shape, and suppose $\RS(\sigma) = (\se{P}, \se{Q})$.
  Then
  \[
    i = \se{P}_{r,k} \textup{ if and only if } j = \se{Q}_{m+1-r, k}.
  \]
\end{lemma}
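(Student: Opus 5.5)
The plan is to deduce the lemma directly from the admissibility fact~\eqref{admissible fact} quoted from~\cite{EHMS}. The only thing to check is that, for a rectangular shape, \emph{every} standard Young tableau is admissible. After that the statement is just bookkeeping about where each entry lands under $\se{P} \mapsto \se{P}^{\uparrow}$.

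\textbf{Step 1: admissibility.} Let $\se{P} \in \SYT(\ell^m)$. Every column of $\se{P}$ has the same length $m$. Reversing each column therefore sends the box in row $r$, column $k$ to row $m+1-r$, column $k$, so
\[
\se{P}^{\uparrow}_{r,k} = \se{P}_{m+1-r,\,k} \quad \text{for all } 1 \leq r \leq m,\ 1 \leq k \leq \ell .
\]
Consequently, row $r$ of $\se{P}^{\uparrow}$ is exactly row $m+1-r$ of $\se{P}$, and it is row strict because $\se{P}$ is. Hence $\se{P}$ is admissible in the sense of~\cite{EHMS}*{Def.~3.1}. This is the one place where rectangularity is used: for a non-rectangular shape the reversed columns would not line up row by row.

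\textbf{Step 2: locate $j$ in $\se{Q}$.} Let $\sigma \in \fS^\la_{ij}$ with $\RS(\sigma) = (\se{P},\se{Q})$. By Step~1 and~\eqref{admissible fact}, $\se{Q} = \sigma \cdot \se{P}^{\uparrow}$. Combining this with the formula for $\se{P}^{\uparrow}$ gives
\[
\se{Q}_{m+1-r,\,k} = \sigma\big(\se{P}^{\uparrow}_{m+1-r,\,k}\big) = \sigma(\se{P}_{r,k}) .
\]
So if $i = \se{P}_{r,k}$, then $\se{Q}_{m+1-r,k} = \sigma(i) = j$.

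\textbf{Step 3: the converse.} Suppose $j = \se{Q}_{m+1-r,k}$. The entry $i$ occupies some box $(r_0,k_0)$ of $\se{P}$, and Step~2 shows that $j$ sits at $(m+1-r_0,k_0)$ in $\se{Q}$. Since the entries of $\se{Q}$ are distinct, $j$ occupies only one box, so $(m+1-r_0,k_0) = (m+1-r,k)$. Thus $(r_0,k_0) = (r,k)$, that is, $i = \se{P}_{r,k}$.

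I expect no real obstacle here. The only point needing care is Step~1, namely confirming that~\cite{EHMS}*{Prop.~3.2} applies with the paper's convention that $\se{P}$ is the recording tableau. This matches the way~\eqref{admissible fact} is stated just before the lemma, so I will take it as given.
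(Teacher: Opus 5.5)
Your proof is correct and follows essentially the same route as the paper: every rectangular tableau is admissible because column reversal is a vertical reflection, so $\se{Q} = \sigma \cdot \se{P}^{\uparrow}$, and hence $j$ sits in $\se{Q}$ at the vertical reflection of the position of $i$ in $\se{P}$. Your explicit handling of the converse is a small addition the paper leaves implicit. It could equally be done by noting $\se{Q}_{m+1-r,k} = \sigma(\se{P}_{r,k})$ and using injectivity of $\sigma$.
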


\begin{proof}
  Any tableau of rectangular shape is admissible, since reversing entries in columns is the same as vertically reflecting the entire tableau.
  Thus in this lemma, $\se{P}$ is admissible and hence by~\eqref{admissible fact} we have $\se{Q} = \sigma \cdot \se{P}^{\uparrow}$.
  Since $\sigma \in \fS^\la_{ij}$, we have $\sigma(i) = j$, and thus the entry $j$ in $\se{Q}$ occupies the same position as the entry $i$ in $\se{P}^{\uparrow}$, which is the vertical reflection of the position of $i$ in $\se{P}$.
\end{proof}

% One can, of course, also prove Lemma~\ref{lemma:RS rectangles} via Algorithm~\ref{alg:Viennot}, by showing that for $\la = (\ell^m)$, the point $(i,j)$ is the $r$th northeasternmost point on the shadow line of $L^{(1)}_{k}$ if and only if the shadow line of $L^{(r)}_k$ has its northern boundary at $i$ and the shadow line of $L^{(m+1-r)}_k$ has its western boundary at $j$.
We define the set
\[
  \operatorname{Par}(a \times b) \coloneqq \Big\{ (\mu_1, \ldots, \mu_a) : \mu \text{ is a partition with } \mu'_1 \leq a \text{ and } \mu_1 \leq b  \Big\},
\]
which is essentially the set of all partitions whose Young diagram fits inside a rectangle with $a$ rows and $b$ columns;
note, however, that if $\mu \in \operatorname{Par}(a \times b)$, then technically $\mu$ is an $a$-tuple (possibly with trailing 0's).
Given $\mu \in \operatorname{Par}(a \times b)$, we will use the shorthand
\begin{align*}
  |\mu| & \coloneqq \mu_1 + \cdots + \mu_a, \\ 
  k \pm \mu & \coloneqq (k \pm \mu_1, \ldots, k \pm \mu_a), \\ 
  \overscriptleftarrow{\mu} & \coloneqq (\mu_a, \ldots, \mu_1), \\
  \operatorname{cor}(\mu) & \coloneqq \Big\{ (r,k) : \mu_r = k \text{ and } \mu'_k = r \Big\}, \\
  \mu \setminus (r,k) &\coloneqq \text{the partition obtained from $\mu$ by removing} \\[-.75ex] 
   & \phantom{{} \coloneqq {} } \text{the box in row $r$ and column $k$, assuming $(r,k) \in \operatorname{cor}(\mu)$}.
\end{align*}
In particular, note that $b - \overscriptleftarrow{\mu} \in \operatorname{Par}(a \times b)$ is obtained by taking the complement of $\mu$ inside the $a \times b$ rectangle, and then rotating 180 degrees.
Moreover, $\operatorname{cor}(\mu)$ is the set of \emph{corners} of $\mu$
, meaning the positions where a box can be removed from the Young diagram of $\mu$ while maintaining a valid Young diagram.
We will use a semicolon to denote concatenation of tuples $\mu = (\mu_1, \ldots, \mu_a)$ and $\nu = (\nu_1, \ldots, \nu_c)$ with a positive integer $k$ in between them:
\[
  (\mu; k; \nu) \coloneqq (\mu_1, \ldots, \mu_a, k, \nu_1, \ldots, \nu_c).
\]

\begin{lemma}
  \label{lemma:size S lambda rectangles}
  Let $\la = (\ell^m)$ be a rectangular shape, with $n \coloneqq \ell m$.
  For all $1 \leq i,j \leq n$,
  \[
    Z(\ell,m,i,j) \coloneqq \left| \fS^\la_{ij} \right| = 
    \sum_{\substack{1 \leq r \leq m, \\ 1 \leq k \leq \ell} \phantom{,}}
    \Bigg[
      \sum_{\substack{(\mu,\nu) \in \mathscr{C}_i(r,k), \\ (\pi, \rho) \in \mathscr{D}_j(r,k) \phantom{,}}} \hspace{-3ex}
      f^{(k + \mu; k-1; \nu)} f^{(k + \pi; k-1; \rho)} f^{(\ell - \overscriptleftarrow{\nu}; \ell-k; \ell - k - \overscriptleftarrow{\mu})} f^{ ( \ell - \overscriptleftarrow{\rho}; \ell-k; \ell - k - \overscriptleftarrow{\pi})}
    \Bigg],
  \]
  where
  \begin{align*}
    \mathscr{C}(r,k) &\coloneqq \operatorname{Par}\big((r-1) \times (\ell-k)\big) \times \operatorname{Par}\big((m-r) \times (k-1) \big), \\ 
  \mathscr{D}(r,k) & \coloneqq \operatorname{Par}\big( (m-r) \times (\ell-k)\big) \times \operatorname{Par}\big( (r-1) \times (k-1) \big),\\
    \mathscr{C}_i(r,k) &\coloneqq \Big\{ (\mu, \nu) \in \mathscr{C}(r,k) : 
    |\mu| + |\nu| + kr = i
    \Big\}, \\
    \mathscr{D}_j(r,k) &\coloneqq \Big\{ (\pi, \rho) \in \mathscr{D}(r,k) : 
    |\pi| + |\rho| + k(m+1-r) = j
    \Big\}.
  \end{align*}
\end{lemma}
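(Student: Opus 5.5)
By Lemma~\ref{lemma:RS rectangles}, $\RS$ restricts to a bijection
\[
  \fS^\la_{ij} \longrightarrow \coprod_{r,k} \Big\{ (\se{P},\se{Q}) \in \SYT(\la)^2 : \se{P}_{r,k} = i, \; \se{Q}_{m+1-r,k} = j \Big\}.
\]
The key point is that the two conditions decouple: the count for each $(r,k)$ equals $N_i(r,k)\, N_j(m+1-r,k)$, where $N_i(r,k)$ is the number of $\se{T}\in\SYT(\ell^m)$ with $\se{T}_{r,k}=i$. So the plan is to count standard tableaux of rectangular shape with a prescribed entry in a prescribed box. I would then match the result to the displayed fourfold product, pairing the $(\mu,\nu)$ factors with $\se{P}$ and the $(\pi,\rho)$ factors with $\se{Q}$.

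To compute $N_i(r,k)$, decompose $\se{T}$ into $\se{T}|_{<i}$ and $\se{T}|_{>i}$. Let $\alpha$ be the shape of $\se{T}|_{\le i}$. It is a partition inside the $m\times\ell$ rectangle with $(r,k)$ as a corner. Hence rows $1,\dots,r-1$ of $\alpha$ have length $\ge k$ and rows $r+1,\dots,m$ have length $\le k-1$. Write the first $r-1$ rows as $k+\mu$ with $\mu\in\operatorname{Par}((r-1)\times(\ell-k))$, row $r$ as $k$, and the last $m-r$ rows as $\nu\in\operatorname{Par}((m-r)\times(k-1))$. Then $|\alpha| = |\mu|+|\nu|+kr = i$, which is exactly the condition defining $\mathscr{C}_i(r,k)$. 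The lower tableau $\se{T}|_{<i}$ is an arbitrary standard tableau of $\alpha\setminus(r,k) = (k+\mu;k-1;\nu)$, giving the factor $f^{(k+\mu;k-1;\nu)}$.

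The upper part $\se{T}|_{>i}$, with $i$ subtracted from each entry, is an arbitrary standard filling of the skew shape $(\ell^m)/\alpha$. Rotating a rectangle by $180^\circ$ and replacing each entry $e$ by $n-i+1-e$ turns it into a standard tableau of straight shape: the complement of $\alpha$ in the rectangle, rotated. Its row lengths, read from the new top, are $\ell-\nu_{m-r},\dots,\ell-\nu_1$, then $\ell-k$, then $\ell-k-\mu_{r-1},\dots,\ell-k-\mu_1$. This is $(\ell-\overscriptleftarrow{\nu};\ell-k;\ell-k-\overscriptleftarrow{\mu})$, giving the third factor.

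Both correspondences are bijective, since $\se{T}$ is recovered by gluing $\se{T}|_{<i}$, the entry $i$ at $(r,k)$, and the un-rotated upper part. Therefore $N_i(r,k) = \sum_{(\mu,\nu)\in\mathscr{C}_i(r,k)} f^{(k+\mu;k-1;\nu)} f^{(\ell-\overscriptleftarrow{\nu};\ell-k;\ell-k-\overscriptleftarrow{\mu})}$.

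Applying the same computation to $\se{Q}$ with row $r$ replaced by $r'=m+1-r$ gives $\pi\in\operatorname{Par}((r'-1)\times(\ell-k)) = \operatorname{Par}((m-r)\times(\ell-k))$ and $\rho\in\operatorname{Par}((m-r')\times(k-1)) = \operatorname{Par}((r-1)\times(k-1))$. The size condition becomes $|\pi|+|\rho|+k(m+1-r)=j$, which matches $\mathscr{D}_j(r,k)$ exactly. Multiplying the two sums and summing over $(r,k)$ gives $Z(\ell,m,i,j)$.

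The main obstacle is bookkeeping rather than conceptual. I must check that concatenations such as $(k+\mu;k-1;\nu)$ are genuine partitions; this holds because $\mu_{r-1}\ge 0$ and $\nu_1\le k-1$. I must also check the reversal orders in the rotated complement, and handle degenerate cases, for instance $k=1$, where $\nu$ is forced to be empty and $k-1=0$ is an allowed trailing zero. The second potential trap is confirming that the $180^\circ$ rotation really yields a straight shape; this is where the rectangularity of $\la$ is used a second time, beyond Lemma~\ref{lemma:RS rectangles}.
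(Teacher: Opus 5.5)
Your proposal is correct and follows essentially the same route as the paper: the paper also uses Lemma~\ref{lemma:RS rectangles} to reduce to pairs with $\se{P}_{r,k}=i$ and $\se{Q}_{m+1-r,k}=j$. It then splits each tableau at $i$ (resp.\ $j$), complements and rotates the upper part by $180^\circ$ (the bijection $\Phi_{\mathrm{Z}}$), and parametrizes the shapes with corner $(r,k)$ as $(k+\mu;k;\nu)$ — your factorization into $N_i(r,k)\,N_j(m+1-r,k)$ is just the product structure already built into the target of $\Phi_{\mathrm{Z}}$.
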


\begin{proof}
  Let $1 \leq i,j \leq n$.
  For all $1 \leq r \leq m$ and $1 \leq k \leq \ell$, define the sets
  \begin{equation*}
    \label{Ai Bj}
    \begin{split}
      \mathscr{A}_i(r,k) & \coloneqq \Big\{ \alpha \in \operatorname{Par}(m \times \ell) : |\alpha| = i \text{ and } (r,k) \in \operatorname{cor}(\alpha) \Big\}, \\ 
      \mathscr{B}_j(r,k) & \coloneqq \Big\{ \beta \in \operatorname{Par}(m \times \ell) : |\beta| = j \text{ and } (m+1-r,k) \in \operatorname{cor}(\beta) \Big\}.
    \end{split}
  \end{equation*}
  By Lemma~\ref{lemma:RS rectangles},
  \[
    \RS(\fS^\la_{ij}) = \coprod_{\substack{1 \leq r \leq m, \\ 1 \leq k \leq \ell \phantom{,}}} \Big\{ (\se{P}, \se{Q}) \in \SYT(\la) \times \SYT(\la) : \se{P}_{r,k} = i \text{ and } \se{Q}_{m+1-r, k} = j \Big\},
  \]
  and thus we have a bijection
  \begin{align}
    \Phi_{\mathrm{Z}} : \RS(\fS^\la_{ij}) & \longrightarrow \coprod_{\substack{1 \leq r \leq m, \\ 1 \leq k \leq \ell \phantom{,}}} 
    \Bigg[ \coprod_{\substack{\alpha \in \mathscr{A}_i(r,k), \\ \beta \in \mathscr{B}_j(r,k) \phantom{,}}} 
    \left(
      \begin{array}{l}
        \phantom{\times} \SYT\big(\alpha \setminus (r,k) \big) \\ 
        \times \SYT\big(\beta \setminus (m+1-r,k) \big)
      \end{array}
    \right) 
    \times
    \left( 
      \begin{array}{l}
        \phantom{\times} \SYT(\ell - \overscriptleftarrow{\alpha}) \\ 
        \times \SYT(\ell - \overscriptleftarrow{\beta})
      \end{array}
    \right) \Bigg], \label{SZ bijection} \\[1.5ex] 
    (\se{P}, \se{Q}) & \longmapsto \big( (\se{P}|_{<i}, \se{Q}|_{<j}), (\se{R}, \se{S})\big) \nonumber
  \end{align}
  where $\se{R}$ (resp., $\se{S}$) is obtained from $\se{P}|_{>i}$ (resp., $\se{Q}|_{>j}$) by replacing each entry $a$ by $n+1-a$ and then rotating 180 degrees.
  Taking cardinalities of both sides in~\eqref{SZ bijection}, we obtain
  \begin{equation}
    \label{P check with alpha beta}
    Z(\ell, m, i, j) = \sum_{\substack{1 \leq r \leq m, \\ 1 \leq k \leq \ell \phantom{,}}} 
    \Bigg[ \sum_{\substack{\alpha \in \mathscr{A}_i(r,k), \\ \beta \in \mathscr{B}_j(r,k) \phantom{,}}} f^{\alpha \setminus (r,k)} f^{\beta \setminus (m+1-r,k)} f^{\ell - \overscriptleftarrow{\alpha}} f^{\ell - \overscriptleftarrow{\beta}} \Bigg].
  \end{equation}
  To sum out the corners, we observe that
  \begin{align*}
    \mathscr{A}_i(r,k) &= \Big\{ (k + \mu; k; \nu) : (\mu,\nu) \in \mathscr{C}_i(r,k) \Big\}, \\ 
    \mathscr{B}_j(r,k) &= \Big\{ (k + \pi; k; \rho) : (\pi,\rho) \in \mathscr{D}_j(r,k) \Big\},
  \end{align*}
  which allows us to rewrite the inner sum in~\eqref{P check with alpha beta} as
  \[
    \sum_{\substack{(\mu,\nu) \in \mathscr{C}_i(r,k), \\ (\pi, \rho) \in \mathscr{D}_j(r,k) \phantom{,}}}
      f^{(k + \mu; k-1; \nu)} f^{(k + \pi; k-1; \rho)} f^{\ell - \overscriptleftarrow{(k + \mu; k; \nu)}} f^{\ell - \overscriptleftarrow{(k + \pi; k; \rho)}}.
  \]
  Upon performing the simplifications
  \begin{align*}
    \ell - \overscriptleftarrow{(k + \mu; k; \nu)} &= (\ell - \overscriptleftarrow{\nu}; \ell-k; \ell - k - \overscriptleftarrow{\mu}), \\ 
    \ell - \overscriptleftarrow{(k + \pi; k; \rho)} &= (\ell - \overscriptleftarrow{\rho}; \ell-k; \ell - k - \overscriptleftarrow{\pi}),
  \end{align*}
  we obtain the result in the form given in the lemma.
\end{proof}

\begin{theorem}
  \label{thm:rectangles}
  Let $\la = (\ell^m)$ be a rectangular shape, with $n \coloneqq \ell m$, and recall $P^\la_{ij}$ from~\eqref{P lambda}.
  For all $1 \leq i,j \leq n$,
  \[
    P^\la_{ij} = \frac{Z(\ell,m,i,j)}{\left( \left[ \prod_{s=0}^{m-1} \frac{s!}{(\ell+s)!} \right] n! \right)^{\!2}},
  \]
  where $Z(\ell,m,i,j)$ is the function defined in Lemma~\ref{lemma:size S lambda rectangles}.
\end{theorem}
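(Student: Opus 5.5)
The plan is to mirror the proofs of Theorems~\ref{thm:hooks} and~\ref{thm:two-row}: identify the numerator with $\check{P}^\la_{ij}$ and the denominator with $|\fS^\la|$.

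First, by~\eqref{P lambda tilde ij}, $\check{P}^\la_{ij} = |\fS^\la_{ij}|$, and Lemma~\ref{lemma:size S lambda rectangles} identifies this cardinality as $Z(\ell,m,i,j)$. This step is simply an invocation; all of the combinatorial content (the vertical-reflection property of Lemma~\ref{lemma:RS rectangles} and the bijection $\Phi_{\mathrm{Z}}$ in~\eqref{SZ bijection}) has already been absorbed into that lemma.

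Second, by~\eqref{size S n lambda}, $|\fS^\la| = (f^\la)^2$. Substituting the rectangular specialization~\eqref{f rectangles} of the hook-length formula gives $f^{(\ell^m)} = \bigl[\prod_{s=0}^{m-1} s!/(\ell+s)!\bigr](\ell m)!$ with $\ell m = n$. If one wants to double-check~\eqref{f rectangles} rather than cite it, the check is short: the hook lengths in row $s+1$ (counting from the bottom row, $s=0$) are $s+1, s+2, \ldots, s+\ell$, whose product is $(\ell+s)!/s!$. Multiplying over $s = 0,\ldots,m-1$ and inverting gives the stated bracket.

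Finally, dividing the first quantity by the second via~\eqref{P lambda}--\eqref{P lambda tilde} yields the displayed formula. There is no genuine obstacle here, since the theorem is a repackaging of Lemma~\ref{lemma:size S lambda rectangles}. The only point requiring care is to confirm that the indexing sets $\mathscr{C}_i(r,k)$ and $\mathscr{D}_j(r,k)$ used in the lemma agree with the description in Theorem~\ref{thm:rectangles in intro}, so that the two statements coincide. This amounts to reading off the constraints $|\mu|+|\nu| = i-kr$ and $|\pi|+|\rho| = j-k(m+1-r)$.
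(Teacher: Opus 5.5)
Your proposal is correct and matches the paper's proof: it writes $\check{P}^\la_{ij} = |\fS^\la_{ij}| = Z(\ell,m,i,j)$ by Lemma~\ref{lemma:size S lambda rectangles}, writes $|\fS^\la| = (f^\la)^2$, and substitutes the rectangular hook-length formula~\eqref{f rectangles}. Your optional check of~\eqref{f rectangles} is also right, since row $s$ from the bottom has hook lengths $s+1,\ldots,s+\ell$ and so contributes $(\ell+s)!/s!$.
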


\begin{proof}
  This follows immediately from~\eqref{P lambda tilde ij}, \eqref{f rectangles}, and Lemma~\ref{lemma:size S lambda rectangles}.
\end{proof}

\begin{rem}[Diagrammatic interpretation and bandwidth]
\label{rem:rectangles}
  When $\la$ is a rectangular shape, the support of $P^\la$ is contained in the octagonal region consisting of those positions $(i,j)$ whose diagonal has length at least $\ell$, and whose antidiagonal has length at least $m$.
  Therefore if $\sigma \in \fS^{(\ell^m)}$, then $\sigma$ has bandwidth $n - \ell = \ell(m-1)$.
  Note that when $m=2$, this result coincides with the two-row case in Remark~\ref{rem:visualize two-row}.
\end{rem}

Theorem~\ref{thm:rectangles} can be restated more cleanly as follows, by collecting all of the $P^\la_{ij}$'s into a single generating function.
(The same is true of Theorems~\ref{thm:hooks} and~\ref{thm:two-row}, although there is less of an aesthetic improvement in doing so.)
Define the following polynomials in $x$ and $y$:
\begin{align*}
  \check{p}(\ell,m;x,y) & \coloneqq \sum_{\mathclap{\substack{1 \leq r \leq m, \\ 1 \leq k \leq \ell}}} (x^r y^{m+1-r})^k \Bigg[\sum_{\substack{(\mu,\nu) \in \mathscr{C}(r,k), \\ (\pi, \rho) \in \mathscr{D}(r,k) \phantom{,}}} 
  \left( \!\!\!
    \begin{array}{l}
    \phantom{{} \cdot {}} f^{(k+\mu;k-1; \nu)} \cdot f^{(\ell - \overscriptleftarrow{\nu}; \ell-k; \ell - k - \overscriptleftarrow{\mu})} \\ 
    {} \cdot f^{(k + \pi ; k-1; \rho)} \cdot f^{(\ell - \overscriptleftarrow{\rho}; \ell-k; \ell - k - \overscriptleftarrow{\pi})}
    \end{array}
  \right)
      x^{|\mu|+|\nu|} y^{|\pi|+|\rho|} 
      \Bigg], \\[1ex]
  p(\ell, m; x,y) & \coloneqq \check{p}(\ell, m; x,y) \Big/ (f^{(\ell^m)})^2,
\end{align*}
where $f^{(\ell^m)}$ is given explicitly in~\eqref{f rectangles}.
It is straightforward to check that Theorem~\ref{thm:rectangles} can now be rewritten as follows:
  \[
    p(\ell, m ; x, y) = \sum_{i,j = 1}^n P^\la_{ij} \: x^i y^j.
  \]
For example, let $\ell = m = 3$, which is the smallest example outside the two-row case.
Upon programming the polynomial $\check{p}(\ell,m; x,y)$, we immediately compute that $\check{p}(3,3;x,y)$ equals
\[
  \scalebox{.9}
{$\left( 
 \begin{array}{rrrrrrrrr}
& & 210xy^3 & + 420 x y^4 & + 504 xy^5 & + 420 xy^6 & + 210 xy^7 & & \\
& + 441x^2y^2 & + 336 x^2 y^3 & + 105 x^2 y^4 & & + 105 x^2 y^6 & + 336 x^2 y^7 & + 441x^2 y^8 & \\ 
+210x^3y &+ 336x^3y^2 &+ 256 x^3y^3 &+ 80x^3 y^4 & & + 80x^3y^6 & + 256x^3 y^7 & + 336x^3 y^8 & + 210x^3 y^9\\
+420 x^4 y & + 105 x^4 y^2 & + 80 x^4 y^3 & + 169 x^4 y^4 & + 216 x^4 y^5 & + 169 x^4 y^6 & + 80 x^4 y^7 & + 105 x^4 y^8 & + 420 x^4 y^9 \\
+504x^5 y & & & + 216 x^5y^4 & +325 x^5y^5 & + 216 x^5 y^6 & & & + 504x^5y^9\\
+420 x^6 y & + 105 x^6 y^2 & + 80 x^6 y^3 & + 169 x^6 y^4 & + 216 x^6 y^5 & + 169 x^6 y^6 & + 80 x^6 y^7 & + 105 x^6 y^8 & + 420 x^6 y^9 \\
+210x^7 y &+ 336x^7 y^2 &+ 256 x^7 y^3 &+ 80x^7 y^4 & & + 80x^7y^6 & + 256x^7 y^7 & + 336x^7 y^8 & + 210x^7 y^9\\
& + 441x^8 y^2 & + 336 x^8 y^3 & + 105 x^8 y^4 & & + 105 x^8 y^6 & + 336 x^8 y^7 & + 441x^8 y^8 & \\ 
& & 210x^9y^3 & + 420 x^9 y^4 & + 504 x^9 y^5 & + 420 x^9 y^6 & + 210 x^9 y^7 & &
\end{array} 
\right)$},
\]
where we have arranged each $x^i y^j$ term in the $(i,j)$ position, so that $\check{P}^\la$ is obtained by ignoring the $x$'s and $y$'s.
Then $P^\la$ is obtained from $\check{P}^\la$ by dividing each entry by $(f^{(3,3,3)})^2 = 1764$.

\section{Proof of Theorem~\ref{thm:fix in intro}}
\label{sec:fixed}

Below we reproduce the statement of Theorem~\ref{thm:fix in intro}, an asymptotic result concerning the average proportion of fixed points for permutations of hook or two-row shape.
Recall that the value of the $k$th Wallis integral $W_k \coloneqq \int_0^{\pi/2} \sin^k x \: dx$ depends on the parity of $k$, as follows:
\begin{align}
W_{2m} & = \frac{(2m-1)!!}{(2m)!!} \cdot \frac{\pi}{2} = \frac{(2m)!}{4^m (m!)^2} \cdot \frac{\pi}{2}, \nonumber \\[1ex]
W_{2m+1} & = \frac{(2m)!!}{(2m+1)!!}  = \frac{4^m (m!)^2}{(2m+1)!}, \label{Wallis odd}
\end{align}
where as usual $(2m)!! \coloneqq 2m(2m-2) \cdots 4 \cdot 2$ and $(2m+1)!! \coloneqq (2m+1)(2m-1) \cdots 3 \cdot 1$.

\begingroup 

\renewcommand\thetheorem{1.5}
  \begin{theorem} 
  Let $\la = (\ell, 1^m)$ or $(\ell, m)$, and let $\sigma$ be chosen uniformly at random from $\fS^{\la}$. 
  For fixed $m \geq 0$,
    \[
      \lim_{\ell \rightarrow \infty} \mathbb{E} \left[ \frac{\left|\operatorname{Fix}(\sigma) \right|}{n} \right] = 
      \frac{(2m)!!}{(2m+1)!!} = W_{2m+1}.
    \]
  \end{theorem}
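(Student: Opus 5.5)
The plan is to compute the trace of $P^\la$ from the diagonal entries of Theorems~\ref{thm:hooks} and~\ref{thm:two-row}, separate a dominant contribution from negligible ones, and evaluate the dominant contribution as a Riemann sum. Since $\mathbb{E}[|\operatorname{Fix}(\sigma)|] = \sum_i P^\la_{ii}$, everything reduces to the behaviour of $\sum_i \check P^\la_{ii}$ divided by $n(f^\la)^2$ as $\ell\to\infty$ with $m$ fixed and $n=\ell+m$.

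\emph{Step 1: discard the terms that are $O(1)$ per permutation.} For a hook $\la=(\ell,1^m)$, I will not estimate $X$ and the $Y(\bar\ell,\cdot,\cdot)$ terms analytically. Instead I use their combinatorial meaning. The term $\sum_i X(\ell,i,i)/(f^\la)^2$ is the expected number of fixed points lying at the crossroads, which is at most $1$. The terms $Y(\bar\ell,i,\bar\imath)$ and $Y(\bar\ell,\bar\imath,i)$ count fixed points on the two legs of the maximal decreasing subsequence, at most $m$ points in total. After dividing by $n$, these contribute $0$ in the limit.

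For a two-row shape, Lemma~\ref{lemma:size SV SW} gives $W(\ell,i,i)=0$ (the set $\mathscr{L}$ is empty when $i=j$). Hence only $V(\ell,i,i)$ survives. For hooks, the two surviving terms are $Y(\ell,i,i)$ and $Y(\ell,\bar\imath,\bar\imath)$, i.e.\ the two increasing legs. Together these amount to summing over all $k$ the condition $\se{P}_{1,k}=i=\se{Q}_{1,k}$, up to the same $O(1)$ crossroads correction.

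\emph{Step 2: reinterpret the main term probabilistically.} In both cases the main term counts pairs $(\se{P},\se{Q})$ in which $i$ sits in the first row of both tableaux at the same column $k$. Equivalently, exactly $i-k$ of the $m$ non-first-row entries are $<i$ in $\se{P}$, and the same number are $<i$ in $\se{Q}$. The additional column-strictness constraints in~\eqref{subtableaux} only matter in the two-row case.

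For hooks, the $m$ first-column entries of $\se{P}$ form a uniform $m$-subset of $\{2,\dots,n\}$, and likewise for $\se{Q}$, independently. For two-row shapes with $m$ fixed and $\ell\to\infty$, each ballot number is asymptotic to the corresponding binomial coefficient: $\shortballot{a\rightarrow c}{b}\sim\binom{a+b}{b}$ when $b$ is fixed and $a\to\infty$, because the subtracted term is of lower order. Substituting $t=i/n$, I expect in both cases
\[
\frac1n\sum_i P^\la_{ii}\;\longrightarrow\;\int_0^1\sum_{s=0}^m\binom{m}{s}^2 t^{2s}(1-t)^{2m-2s}\,dt ,
\]
the probability that two independent $\mathrm{Bin}(m,t)$ variables agree, averaged over $t$. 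To justify this, I will write $\check P^\la_{ii}/(f^\la)^2$ explicitly as $\sum_s \binom{i-1}{s}^2\binom{n-i}{m-s}^2/\binom{n-1}{m}^2$, up to $O(1/n)$ corrections. For two-row shapes I take ratios of ballot numbers to binomials uniformly in $i$, away from $O(1)$ boundary windows. Then I apply dominated convergence, which is valid since every summand lies in $[0,1]$.

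\emph{Step 3: evaluate the integral.} Beta integrals give $\int_0^1 t^{2s}(1-t)^{2m-2s}dt=(2s)!(2m-2s)!/(2m+1)!$. The limit therefore equals
\[
\frac{(m!)^2}{(2m+1)!}\sum_{s=0}^m\binom{2s}{s}\binom{2m-2s}{m-s}=\frac{4^m(m!)^2}{(2m+1)!},
\]
using the classical identity $\sum_s\binom{2s}{s}\binom{2m-2s}{m-s}=4^m$, which follows by squaring $(1-4x)^{-1/2}$. By~\eqref{Wallis odd} this is $W_{2m+1}$.

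\emph{Main obstacle.} The delicate step is the two-row case of Step 2. There I must verify that the skew ballot factors $\shortballot{\ell-k\rightarrow 0}{n-\ell+k-i}$ satisfy the binomial asymptotics uniformly for $i$ in the bulk. I must also check that the edge regions, where $\ell-k$ is comparable to $m$ and the subtracted binomial is not negligible, involve only $O(1)$ values of $i$. I would handle this first by the crude bound $0\le P^\la_{ii}\le1$ on windows of width $o(n)$ near the two ends, and then by the explicit ratio $\binom{a+b}{b-1}/\binom{a+b}{b}=b/(a+1)\to0$ in the bulk.
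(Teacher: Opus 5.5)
The hook case of your Step~1 rests on a false claim. The set $\mathrm{X}(\sigma)$ collects the intersection points of \emph{all} pairs (longest increasing, longest decreasing subsequence). These subsequences are far from unique, so $\mathrm{X}(\sigma)$ can contain many points, and $\sum_i X(\ell,i,i)/(f^\la)^2$ is not bounded by $1$. For example, $\sigma=(n,2,3,\ldots,n-1,1)$ has shape $(n-2,1,1)$. Each of its $n-2$ fixed points $(q,q)$ lies both on the increasing subsequence $(2,\ldots,n-1)$ and on the decreasing subsequence $(n,q,1)$. Quantitatively, for even $m$ we have $K(\ell,i,i)=i-m/2$ and $X(\ell,i,i)=\binom{i-1}{m/2}^2\binom{n-i}{m/2}^2$. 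So the crossroads carries asymptotically $\binom{m}{m/2}^2\frac{(m!)^2}{(2m+1)!}\,n$ expected fixed points, e.g.\ $\frac{2}{15}n$ out of $\frac{8}{15}n$ when $m=2$. (For odd $m$ the diagonal $X$ terms vanish, so the problem is confined to even $m$.) The same misconception reappears in your claim that $Y(\ell,i,i)+Y(\ell,\bar\imath,\bar\imath)$ equals the sum over all $k$ of $\#\{\se{P}_{1,k}=i=\se{Q}_{1,k}\}$ up to an $O(1)$ correction. In fact $Y(\ell,i,i)$ covers only $k<K$, i.e.\ $s=i-k>m/2$, and $Y(\ell,\bar\imath,\bar\imath)$ covers only $s<m/2$. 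The omitted slice $s=m/2$ has order $n^{2m+1}$, which is $\Theta(n)$ after normalization, not $O(1)$.

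Your limiting integral is still the right one, but only because the two misstatements cancel. The $s=m/2$ summand you retain in $\sum_{s=0}^m\binom{i-1}{s}^2\binom{n-i}{m-s}^2$ is, to leading order, exactly $X(\ell,i,i)$. As written, nothing in your argument justifies keeping it rather than dropping it. Dropping it gives $\frac{(m!)^2}{(2m+1)!}\bigl(4^m-\binom{m}{m/2}^2\bigr)$, e.g.\ $\frac{2}{5}$ instead of $\frac{8}{15}$ for $m=2$. The repair is to keep $X$ and identify it with the middle slice. This is precisely the paper's bookkeeping: its term~1 supplies the missing $p=m/2$ summand in terms~2 and~3.

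The rest is sound. Your bound on the two $Y(\bar\ell,\cdot,\cdot)$ terms is correct, since a point lying southwest or northeast of every crossroads point avoids every longest increasing subsequence. The two-row reduction to $V(\ell,i,i)$, with ballot numbers replaced by binomials in the bulk, matches the paper. Once fixed, your Riemann-sum and Beta-integral evaluation is a valid alternative to the paper's exact summation via the binomial squaring and upper-convolution identities. It trades exact sums for a uniform-approximation argument. In return it explains the limit as the averaged probability that two independent $\mathrm{Bin}(m,t)$ variables coincide.
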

\endgroup

The proof of Theorem~\ref{thm:fix in intro} will begin with the observation that the expected proportion of fixed points in a uniform random element of $\fS^\la$ equals $1/n$ times the trace of $P^\la$, which can be obtained from the explicit $P^\la_{ij}$ formulas in Theorems~\ref{thm:hooks} and~\ref{thm:two-row}.
As an example before the proof, the following table compares the limiting values $W_{2m+1}$ against the actual values of $\operatorname{tr} P^\la / n$, for both hook and two-row shapes, for $n=2000$:
\[
  \renewcommand{\arraystretch}{1.5}
  \begin{array}{r|ccccccccccc}
    m & 0 & 1 & 2 & 3 & 4 & 5 & 6 & 7 & 8 & 9 & 10 \\ \hline 
    \text{exact } W_{2m+1} & 1 & \frac{2}{3} & \frac{8}{15} & \frac{16}{35} & \frac{128}{315} & \frac{256}{693} & \frac{1024}{3003} & \frac{2048}{6435} & \frac{32768}{109395} & \frac{65536}{230945} & \frac{262144}{969969} \\ 
    \text{approx. }W_{2m+1} & 1 & 0.667 & 0.533 & 0.457 & 0.406 & 0.369 & 0.341 & 0.318 & 0.300 & 0.284 & 0.270 \\ 
    \operatorname{tr} P^{(n-m, 1^{m})}/n & 1 & 0.666 & 0.533 & 0.456 & 0.405 & 0.368 & 0.340 & 0.316 & 0.298 & 0.282 & 0.268 \\ 
    \operatorname{tr} P^{(n-m, \: m)}/n & 1 & 0.666 & 0.532 & 0.455 & 0.404 & 0.367 & 0.338 & 0.315 & 0.296 & 0.280 & 0.266
  \end{array}
\]
(All approximations have been rounded to three decimal places.)
Before proving Theorem~\ref{thm:fix in intro}, we record the following well-known binomial coefficient identities:
\begin{align}
  \textstyle \binom{a}{b}^2 &= \sum_{t=0}^b \textstyle \binom{a}{b+t} \binom{b+t}{t} \binom{b}{t} & \text{(see bijective argument below)}, \label{ID squaring binomial}\\ 
  \sum_{h=0}^a \textstyle \binom{h}{b} \binom{a-h}{c} & = \textstyle \binom{a+1}{b+c+1} & \text{special case of equation (5.26) in \cite{GKP}*{p.~169}}, \label{ID upper convolution} \\ 
  \sum_{p=0}^m \textstyle \binom{2p}{p} \binom{2m - 2p}{m-p} &= 4^m & \text{equation (5.39) in~\cite{GKP}*{p.~187}}, \label{ID 4m} \\ 
  {\textstyle \binom{a+c}{b}} & = \frac{a^b}{b!} + O(a^{b-1}), \label{binomial order} \\[1ex]
  {\textstyle \binom{a+c}{b} } & \sim \frac{a^b}{b!} \text{ as } a \rightarrow \infty. \label{ID asymptotic}
\end{align}
The identity~\eqref{ID squaring binomial} follows immediately from the following bijective argument.
In particular, the left-hand side of~\eqref{ID squaring binomial} is the number of ordered pairs $(X,Y)$ of $b$-element subsets of an $a$-element set, while the right-hand side refines these ordered pairs by the quantity $t \coloneqq |X \setminus Y| = |Y \setminus X|$;
indeed, for fixed $t$, the number of such pairs $(X,Y)$ is the number $\binom{a}{b+t}$ of choices of $X \cup Y$, times the number $\binom{b+t}{t}$ of choices of $X \setminus Y$, times the number $\binom{b}{t}$ of remaining choices of $Y \setminus X$.
The fact~\eqref{binomial order} follows from the fact that $\binom{a+c}{b} = (a+c)!/(b! \cdot (a-b+c)!)$ is a polynomial in~$a$ with leading term $a^b / b!$, and this in turn yields the asymptotic~\eqref{ID asymptotic}.

\begin{proof}[Proof of Theorem~\ref{thm:fix in intro}]

For a uniform random element $\sigma \in \fS^\la$, the definitions imply that the expected value of $\left| \operatorname{Fix}(\sigma) \right|$ equals the trace of $P^\la$:
\begin{align}
  \mathbb{E} \Big[ \left| \operatorname{Fix}(\sigma) \right| \Big] & = \frac{1}{\left| \fS^\la \right|} \sum_{\sigma \in \fS^\la} \left| \{ i : \sigma(i) = i \} \right| \nonumber \\ 
  &= \frac{1}{\left| \fS^\la \right|} \sum_{i=1}^n \left| \{ \sigma \in \fS^\la : \sigma(i) = i \} \right| = \sum_{i=1}^n \frac{\left| \fS^\la_{ii} \right|}{\left| \fS^\la \right| } = \sum_{i=1}^n P^\la_{ii} = \operatorname{tr} P^\la. \label{E fix equals tr P}
\end{align}

  \textbf{Hook case: $\la = (\ell, 1^m)$}.
  We adopt the notation of Section~\ref{sec:hooks}.
  Since $m = n- \ell$, note that $K(\ell,i,i) = i - (m/2)$.
  Thus by Theorem~\ref{thm:hooks}, upon converting the multiset numbers into binomial coefficients via~\eqref{multiset definition}, the trace of $\check{P}^\la$ equals
  \[
    \sum_{i=1}^n \Bigg[ \textstyle \binom{i-1}{m/2}^2 \binom{n-i}{m/2}^2 + \displaystyle \overbrace{\sum_{\mathclap{k < i - (m/2)}} \textstyle \binom{i-2}{i-k}^2 \binom{n-i}{m -i + k}^2}^{F(\ell, i, i)} + \displaystyle \overbrace{\sum_{\mathclap{k < \bar{\imath} - (m/2)}} \textstyle \binom{\bar{\imath} - 2}{\bar{\imath} - 2}^2 \binom{n - \bar{\imath}}{m - \bar{\imath} + k}^2}^{F(\ell, \bar{\imath}, \bar{\imath})} + \displaystyle \overbrace{2\sum_{\mathclap{k < (m/2)+1}} \textstyle \binom{i-2}{k-2} \binom{\bar{\imath} - 2}{k-2} \binom{n - i}{m + 1 - k} \binom{n - \bar{\imath}}{m + 1 - k}}^{F(\bar{\ell}, i, \bar{\imath}) + F(\bar{\ell}, \bar{\imath}, i)}  \Bigg],
  \]
  which equals the following upon reindexing:
  \begin{equation}
    \label{big sum over h fix m}
    \sum_{h=0}^{n-1} \Bigg[ \overbrace{\textstyle \binom{h}{m/2}^2 \binom{n-h-1}{m/2}^2}^{h = i-1} + \displaystyle \overbrace{\sum_{\mathclap{0 \leq p < m/2}} \textstyle \binom{h}{m-p}^2 \binom{n-h-2}{p}^2}^{\substack{h=i-2, \\ p = m-i+k}} + \displaystyle \overbrace{\sum_{\mathclap{m/2 < p \leq m}} \textstyle \binom{h}{m-p}^2 \binom{n-h-2}{p}^2}^{\substack{h = i-1, \\ p = \bar{\imath} - k}} + \displaystyle \overbrace{2\sum_{\mathclap{0 \leq p < m/2}} \textstyle \binom{h-1}{p-1} \binom{n-h-2}{p-1} \binom{n-h-1}{m - p} \binom{h}{m-p}}^{\substack{h = i-1, \\ p = k-1}}  \Bigg].
  \end{equation}
  Note that among the four terms in~\eqref{big sum over h fix m}, term 1 vanishes if $m$ is odd, and otherwise can be rewritten as follows:
  \begin{align}
    \text{term 1} & = \sum_{h=0}^{n-1} \textstyle \binom{h}{m/2}^2 \binom{n-h-1}{m/2}^2 \nonumber \\
    & = \left(\sum_{t=0}^{m/2} {\textstyle  \binom{i-1}{(m/2) + t} \binom{(m/2)+t}{t} \binom{m/2}{t} } \right) \left( \sum_{u=0}^{m/2} {\textstyle \binom{n-i}{(m/2) + u} \binom{(m/2)+u}{u} \binom{m/2}{u} } \right) & \text{by~\eqref{ID squaring binomial}} \nonumber \\
    &= \sum_{t,u = 0}^{m/2} {\textstyle \binom{m/2}{t} \binom{(m/2)+t}{t} \binom{m/2}{u} \binom{(m/2) + u}{u}} \left( \sum_{h=0}^{n-1} {\textstyle \binom{i-1}{(m/2)+t} \binom{n-i}{(m/2)+u}}\right) & \text{rearranging} \nonumber \\ 
    &= \sum_{t,u = 0}^{m/2} {\textstyle \binom{m/2}{t} \binom{(m/2)+t}{t} \binom{m/2}{u} \binom{(m/2) + u}{u}} \cdot \textstyle \binom{n}{m + t+ u + 1} & \text{by~\eqref{ID upper convolution}} \nonumber \\ 
    & \sim {\textstyle \binom{m}{m/2}^2 } \cdot \frac{n^{2m+1}}{(2m+1)!} \text{ as $n \rightarrow \infty$} & \text{by~\eqref{ID asymptotic}}, \label{term 1 asymptotic}
  \end{align}
  where the last line is obtained by observing that $m + t + u + 1$ obtains its maximum if and only if $t = u = m/2$.
  Next, note that terms 2 and 3 in~\eqref{big sum over h fix m} both vanish at $h=n-1$, and can be rewritten in the same way as term 1 above, upon putting $\mathscr{P} \coloneqq \{0, \ldots, m\} \setminus \{m/2\}$:
  \begin{align*}
    \text{term 2 + term 3} &= \sum_{h=0}^{n-2} \: \sum_{p \in \mathscr{P}} \textstyle \binom{h}{m-p}^2 \binom{n-h-2}{p}^2 \\ 
     &= \sum_{h=0}^{n-2} \: \sum_{p \in \mathscr{P}} \left( \sum_{t=0}^{m-p} {\textstyle   \binom{h}{m-p+t} \binom{m-p+t}{t} \binom{m-p}{t}} \right) \left( \sum_{u=0}^{p} {\textstyle \binom{n - h - 2}{p + u} \binom{p+u}{u} \binom{p}{u}  } \right) & \text{by~\eqref{ID squaring binomial}} \\ 
    &= \sum_{p \in \mathscr{P}}  \sum_{t=0}^{m-p} \sum_{u=0}^{p} {\textstyle \binom{m-p}{t} \binom{m-p+t}{t} \binom{p}{u} \binom{p+u}{u}} \left(\sum_{h=0}^{n-2} {\textstyle \binom{h}{m-p+t} \binom{n-h-2}{p+u} } \right) & \text{rearranging} \\
    &= \sum_{p \in \mathscr{P}}  \sum_{t=0}^{m-p} \sum_{u=0}^{p} {\textstyle \binom{m-p}{t} \binom{m-p+t}{t} \binom{p}{u} \binom{p+u}{u}} \cdot \textstyle \binom{n-1}{m+t+u+1} & \text{by~\eqref{ID upper convolution}} \\
    & \sim \left(\sum_{p \in \mathscr{P}} {\textstyle \binom{2p}{p} \binom{2m -2p}{m-p}} \right) \cdot \frac{n^{2m+1}}{(2m+1)!} \text{ as $n \rightarrow \infty$} & \text{by~\eqref{ID asymptotic}}.
  \end{align*}
  Observe that if $m$ is odd, then term 1 vanishes and $\mathscr{P} = \{0, \ldots, m\}$, so there is no missing term in the sum over $p$.
  On the other hand, if $m$ is even, then the $\binom{m}{m/2}^2$ in the term 1 asymptotic~\eqref{term 1 asymptotic} is precisely the missing $p = m/2$ term in the sum over $p$.
  Hence in either case, in~\eqref{big sum over h fix m} as $n \rightarrow \infty$,
  \begin{align}
    \text{term 1 + term 2 + term 3} & = \sum_{h=0}^{n-2} \: \sum_{p=0}^m \textstyle \binom{h}{m-p}^2 \binom{n - h - 2}{p}^2 \nonumber \\
    & \sim \left( \sum_{p = 0}^m { \textstyle \binom{2p}{p} \binom{2m - 2p}{m-p} } \right) \cdot \frac{n^{2m+1}}{(2m+1)!} \label{main hook asymptotic} \\
    & = \frac{4^m}{(2m+1)!} \cdot n^{2m+1} & \text{by~\eqref{ID 4m}}. \label{123 asymptotic}
  \end{align}
  Finally, by performing the same analysis, we find that term 4 in~\eqref{big sum over h fix m} is asymptotic to a constant multiple of $n^{2m-1}$, which can be seen immediately by observing that the sum of the four lower binomial coefficient indices equals $2m - 2$, rather than $2m$ (as in terms 1, 2, and 3).
  Hence term 4 contributes nothing to the asymptotic behavior of~\eqref{big sum over h fix m} as $n \rightarrow \infty$.
  Thus by~\eqref{123 asymptotic} we have
  \[
    \eqref{big sum over h fix m} = \operatorname{tr} \check{P}^{(n-m, \: 1^m)} \sim \frac{4^m}{(2m+1)!} \cdot n^{2m+1} \text{ as $n \rightarrow \infty$},
  \]
  and therefore since $\left| \fS^\la \right| = (f^\la)^2 = \shortmset{n-m}{m}^2 = \binom{n-1}{m}^2 \sim n^{2m} / (m!)^2$, we obtain
  \begin{equation}
    \label{penultimate fixed m equation}
    \operatorname{tr} P^{(n-m, \: 1^m)} = \frac{\operatorname{tr} \check{P}^{(n-m, \: 1^m)}}{\binom{n-1}{m}^2} \sim \frac{4^m (m!)^2}{(2m+1)!} \cdot n = W_{2m+1} \cdot n,
  \end{equation}
  where $W_{2m+1}$ is the Wallis integral in~\eqref{Wallis odd}.
  Finally, applying~\eqref{E fix equals tr P} to~\eqref{penultimate fixed m equation}, we obtain
  \[
    \mathbb{E} \Big[ \left| \operatorname{Fix}(\sigma) \right| \Big] = \operatorname{tr} P^{(n-m, \: 1^m)} \sim W_{2m+1} \cdot n \text{ as $n \rightarrow \infty$},
  \]
  and so dividing by $n$, and letting $\ell = n - m$ approach infinity, we obtain
  \[
    \lim_{\ell \rightarrow \infty} \mathbb{E} \left[ \frac{\left| \operatorname{Fix}(\sigma) \right|}{n} \right] = \lim_{n \rightarrow \infty} \mathbb{E} \left[ \frac{\left| \operatorname{Fix}(\sigma) \right|}{n} \right] = W_{2m+1},
  \]
  which completes the proof in the hook case.

  \textbf{Two-row case: $\la = (\ell, m)$}. 
  We adopt the notation of Section~\ref{sec:two-row}.
  Recall from the proof of Lemma~\ref{lemma:size SV SW} that for all $1 \leq i \leq n$, we have $W(\ell,i,i) = 0$.
  Therefore by Theorem~\ref{thm:two-row}, the trace of $\check{P}^\la$ equals $\sum_i V(\ell,i,i)$.
  Substituting $m = \ell - n$ and converting ballot numbers to binomial coefficients via~\eqref{skew ballot definition}--\eqref{ballot definition}, we obtain
  \begin{align*}
    \operatorname{tr} \check{P}^\la = \sum_{i=1}^n V(n-m,i,i) &= \sum_{i=1}^n \sum_{k=1}^{n-m} \textstyle \tallballot{k-1}{i-k}^2 \tallballot{n - m - k}{m + k - i}^2 \\ 
    &= \sum_{i=1}^n \left( 
      \sum_{k=i-m}^{i}
    \textstyle \left[\binom{i-1}{i-k} - \binom{i-1}{i-k-1} \right]^2 \left[ \binom{n-i}{m + k - i} - \binom{n-i}{m + k - i - 1} \right]^2 \right ).
  \end{align*}
Making the substitutions $h = i-1$ and $p = m+k-i$, we obtain
\begin{align*}
  \operatorname{tr} \check{P}^\la &= \sum_{h=0}^{n-1} \; \sum_{p=0}^m \textstyle \left[ \binom{h}{m-p} - \binom{h}{m-p-1} \right]^2 \left[ \binom{n-h-1}{p} - \binom{n-i}{p-1} \right]^2 \\ 
  & = \sum_{h=0}^{n-1} \; \sum_{p=0}^m \textstyle \left[ \binom{h}{m-p}^2 \binom{n-h-1}{p}^2 + \text{lower-order terms in $n$}\right] & \text{by~\eqref{binomial order}} \\ 
  & \sim \sum_{h=0}^{n-1} \; \sum_{p=0}^m \textstyle \binom{h}{m-p}^2 \binom{n-h-1}{p}^2 \quad \text{as $n \rightarrow \infty$} \\ 
  & \sim \frac{4^m}{(2m+1)!} \cdot n^{2m+1} & \text{by~\eqref{main hook asymptotic} and~\eqref{123 asymptotic}}.
\end{align*}
The rest of the proof is now identical to the hook case above, since in the two-row case we still have
\[
  \left| \fS^\la \right| = (f^\la)^2 = \textstyle \shortballot{n-m}{m}^2 = \left[\binom{n}{m} - \binom{n}{m-1} \right]^2 \sim (n^m / m!)^2 = n^{2m} / (m!)^2 \text{ as $n \rightarrow \infty$},
\]
just as in the hook case~\eqref{penultimate fixed m equation}.
\end{proof}

\bibliographystyle{amsplain}
\bibliography{references}

\end{document}